\theoremstyle{plain}
\newtheorem{lemma}{Lemma}[section]
\newtheorem{theorem}[lemma]{Theorem}
\newtheorem{proposition}[lemma]{Proposition}
\newtheorem{corollary}[lemma]{Corollary}
\newtheorem*{sclaim}{Claim}
\newtheorem*{stat}{\name}
\newcommand{\name}{testing}
\theoremstyle{definition}
\newtheorem{definition}[lemma]{Definition}
\newtheorem{example}[lemma]{Example}
\newtheorem{problem}{Problem}
\theoremstyle{remark}
\newtheorem{remark}[lemma]{Remark}
\newtheorem*{note}{Note}
\newenvironment{all}[1]{\renewcommand{\name}{#1}\begin{stat}}
                        {\end{stat}}
\newcommand{\qedc}{{\qed}~{\rm Claim~{\theclaim}.}}
\newcommand{\qedsc}{{\qed}~{\rm Claim.}}
\newenvironment{scproof}
{\begin{proof}[Proof of Claim.]}
{\qedsc\renewcommand{\qed}{}\end{proof}}
\numberwithin{equation}{section}
\numberwithin{figure}{section}
\newcommand{\eps}{\varepsilon}
\newcommand{\ol}[1]{\overline{#1}}
\newcommand{\les}{\leqslant}
\newcommand{\id}{\mathrm{id}}
\newcommand{\tvi}{\vrule height 10pt depth 4pt width 0pt}
\newcommand{\pup}[1]{\textup{(}{#1}\textup{)}}
\newcommand{\Mat}{\mathrm{M}}
\newcommand{\Idp}{\mathrm{I}}
\DeclareMathOperator{\card}{card}
\DeclareMathOperator{\kur}{kur}
\DeclareMathOperator{\tr}{tr}
\DeclareMathOperator{\Dim}{Dim}
\DeclareMathOperator{\forg}{\operatorname{\Pi}}
\DeclareMathOperator{\lift}{\operatorname{\Gamma}}
\newcommand{\xF}{\operatorname{\mathbf{F}}}
\newcommand{\NN}{\mathbb{N}}
\newcommand{\ZZ}{\mathbb{Z}}
\newcommand{\QQ}{\mathbb{Q}}
\newcommand{\RR}{\mathbb{R}}
\newcommand{\CC}{\mathbb{C}}
\renewcommand{\AA}{\mathbb{A}}
\newcommand{\DD}{\mathbb{D}}
\newcommand{\II}{\mathbb{I}}
\newcommand{\JJ}{\mathbb{J}}
\newcommand{\into}{\hookrightarrow}
\newcommand{\onto}{\twoheadrightarrow}
\newcommand{\simst}{\overset{*}{\sim}}
\newcommand{\ba}{\boldsymbol{a}}
\newcommand{\bb}{\boldsymbol{b}}
\newcommand{\bc}{\boldsymbol{c}}
\newcommand{\be}{\boldsymbol{e}}
\renewcommand{\bf}{\boldsymbol{f}}
\newcommand{\bg}{\boldsymbol{g}}
\newcommand{\bh}{\boldsymbol{h}}
\newcommand{\bs}{\boldsymbol{s}}
\newcommand{\bA}{\boldsymbol{A}}
\newcommand{\bB}{\boldsymbol{B}}
\newcommand{\bX}{\boldsymbol{X}}
\newcommand{\xt}{\mathbf{t}}
\newcommand{\jirr}{join-ir\-re\-duc\-i\-ble}
\newcommand{\Chom}{C*-ho\-mo\-mor\-phism}
\newcommand{\Vhom}{V-ho\-mo\-mor\-phism}
\newcommand{\Cemb}{C*-em\-bed\-ding}
\newcommand{\cm}{commutative monoid}
\newcommand{\res}{\mathbin{\restriction}}
\newcommand{\norm}[1]{\Vert{#1}\Vert}
\newcommand{\Ktzero}{K(\xt)_{(0)}}
\newcommand{\gQa}{\operatorname{\Theta_{\mathrm{a}}}}
\newcommand{\gQr} {\operatorname{\Theta_{\mathrm{r}}}}
\newcommand{\gYa}{\Psi_{\mathrm{a}}}
\newcommand{\gYr}{\Psi_{\mathrm{r}}}
\newcommand{\set}[1]{\left\{#1\right\}}
\newcommand{\setm}[2]{\set{{#1}\mid{#2}}}
\newcommand{\famm}[2]{\left({#1}\mid{#2}\right)}
\DeclareMathOperator{\rC}{C}
\DeclareMathOperator{\rF}{F}
\DeclareMathOperator{\rJ}{J}
\DeclareMathOperator{\rK}{K}
\DeclareMathOperator{\rV}{V}
\newcommand{\rVu}{\rV^{1}}
\newcommand{\cA}{\mathcal{A}}
\newcommand{\cF}{\mathcal{F}}
\newcommand{\vL}{\vec{L}}
\newcommand{\vLu}{{\vec{L}}^{\mathrm{u}}}
\newcommand{\cR}{\mathcal{R}}
\newcommand{\sM}{\mathsf{M}}
\newcommand{\vR}{{\vec{R}}}
\newcommand{\vA}{{\vec{A}}}
\newcommand{\vD}{{\vec{D}}}
\newcommand{\vDu}{{\vec{D}}^{\mathrm{u}}}
\newcommand{\vE}{{\vec{E}}}
\newcommand{\vK}{{\vec{K}}}
\newcommand{\sep}{\mathrm{sep}}
\newcommand{\CM}{\mathbf{CMon}}
\newcommand{\Ring}{\mathbf{Ring}}
\newcommand{\wVSem}{\mathbf{wVSem}}
\newcommand{\Exch}{\mathbf{Exch}}
\newcommand{\Reg}{\mathbf{Reg}}
\newcommand{\meas}{\mathrm{meas}}
\newcommand{\pmeas}{\mathrm{pmeas}}
\newcommand{\Vmeas}{\mathrm{Vmeas}}
\newcommand{\Vpmeas}{\mathrm{Vpmeas}}
\newcommand{\AF}{\mathbf{AF}}
\newcommand{\xA}{\mathbf{A}}
\newcommand{\xB}{\mathbf{B}}
\newcommand{\xC}{\mathbf{C}}
\newcommand{\xI}{\mathbf{I}}
\newcommand{\xM}{\mathbf{M}}
\newcommand{\xR}{\mathbf{R}}
\newcommand{\xS}{\mathbf{S}}
\newcommand{\xSe}{\mathbf{S}^{\mathrm{emb}}}
\newcommand{\xV}{\mathbf{V}}
\newcommand{\xRR}{\mathbf{RR_0}}
\newcommand{\xRRs}{\xRR^{\sep}}
\newcommand{\rloopd}[2]{\ar@'{@+{[0,0]+(5,-#2)}@+{[0,0]+(#1,0)}@+{{[0,0]+(5,#2)}}}}
\newcommand{\lloopd}[2]{\ar@'{@+{[0,0]+(-5,#2)}@+{[0,0]+(-#1,0)}@+{{[0,0]+(-5,-#2)}}}}
\begin{document}

\author[F. Wehrung]{Friedrich Wehrung}
\address{LMNO, CNRS UMR 6139\\
D\'epartement de Math\'ematiques\\
Universit\'e de Caen\\
14032 Caen Cedex\\
France} \email{wehrung@math.unicaen.fr}
\urladdr{http://www.math.unicaen.fr/\~{}wehrung}

\title[Lifting defects in nonstable K${}_0$-theory]{Lifting defects for nonstable K${}_0$-theory of exchange rings and C*-algebras}

\subjclass[2000]{19A49, 46L80, 16B50, 16B70, 16E20, 16E50, 16N60, 18A30, 18C35, 06B20, 08B20, 03E05}
\keywords{Ring; exchange property; regular; C*-algebra; real rank; stable rank; index of nilpotence; semiprimitive; V-semiprimitive; weakly V-semiprimitive; simplicial monoid; dimension group; commutative monoid; order-unit; o-ideal; refinement property; nonstable; K-theory; idempotent; orthogonal; projection; functor; diagram; lifting; premeasure; measure; larder; lifter; condensate; CLL}
\date{\today}

\begin{abstract}
The assignment (\emph{nonstable $\rK_0$-theory}), that to a ring~$R$ associates the monoid~$\rV(R)$ of Murray-von Neumann equivalence classes of idempotent infinite matrices with only finitely nonzero entries over~$R$, extends naturally to a functor. We prove the following lifting properties of that functor:

\begin{enumerate}
\item There is no functor $\lift$, from simplicial monoids with order-unit with normalized positive \emph{homomorphisms} to exchange rings, such that $\rV\circ\lift\cong\id$.

\item There is no functor $\lift$, from simplicial monoids with order-unit with normalized positive \emph{embeddings} to C*-algebras of real rank~$0$ (resp., von Neumann regular rings), such that $\rV\circ\lift\cong\id$.

\item There is a $\set{0,1}^3$-indexed commutative diagram~$\vD$ of simplicial monoids that can be lifted, with respect to the functor~$\rV$, by exchange rings and by C*-algebras of real rank~$1$, but not by semiprimitive exchange rings, thus neither by regular rings nor by C*-algebras of real rank~$0$.
\end{enumerate}

By using categorical tools (\emph{larders}, \emph{lifters}, \emph{CLL}) from a recent book from the author with P. Gillibert, we deduce that there exists a unital exchange ring of cardinality~$\aleph_3$ (resp., an $\aleph_3$-separable unital C*-algebra of real rank~$1$)~$R$, with stable rank~$1$ and index of nilpotence~$2$, such that~$\rV(R)$ is the positive cone of a dimension group but it is not isomorphic to~$\rV(B)$ for any ring~$B$ which is either a C*-algebra of real rank~$0$ or a regular ring.
\end{abstract}

\maketitle
\section{Introduction}\label{S:Intro}

While attending the August 2010 ``New Trends in Noncommutative Algebra'' conference in Seattle, the author of the present paper was told about the question whether the~$\rK_0$ functor from~AF C*-algebras to dimension groups \emph{splits}, that is, has a (categorical) right inverse. We describe here three reasons why this is not the case. The first and the third reason are more conveniently expressed \emph{via} the \emph{nonstable $\rK_0$-theory} (often also called ``nonstable K-theory'') of a ring~$R$, mainly described by a \cm, usually denoted by~$\rV(R)$. If~$R$ is unital, then~$\rV(R)$ is the monoid of all isomorphism types of finitely generated projective right $R$-modules and~$\rK_0(R)$ is the Grothendieck group of~$\rV(R)$.

\subsection*{Our first reason,} Corollary~\ref{C:FirstNonLift}, implies that the more general question, whether the functor~$\rV$ has a right inverse from simplicial monoids (cf. Section~\ref{S:BasicMon}) to \emph{exchange rings} (cf. Warfield~\cite{Warf72}, and Ara~\cite{Ara97} for the non-unital case), has a negative answer. Actually, Corollary~\ref{C:FirstNonLift} is stated for an even more general class of rings that we call \emph{weakly V-semiprimitive rings}. It also suggests that the original question might get more interesting if one required the morphisms between dimension groups be \emph{one-to-one}. \emph{\textbf{Our second and third reason will address the modified $\mathbf{K_0}$-splitting question}}.

\subsection*{Our second reason,} Proposition~\ref{P:NonSplit}, involves the author's construction, introduced in Wehrung~\cite{NonMeas}, of a dimension group with order-unit, of cardinality~$\aleph_2$, which is not isomorphic to~$\rV(R)$ for any (von~Neumann) regular ring~$R$. If~$\rK_0$ had a right inverse from simplicial groups (cf. Section~\ref{S:BasicMon}) to AF C*-algebras, then it would also have a right inverse from  simplicial groups to regular rings. However, due to the abovementioned counterexample, this is impossible. There are some complications, due to the requirement that the maps between simplicial groups be one-to-one, that we solve by using tools from lattice theory and universal algebra.

\subsection*{Our third reason,} Theorem~\ref{T:NoLift}, is the combinatorial core of our second reason. It involves a commutative diagram~$\vD$ of simplicial monoids, indexed by a cube (i.e., the powerset lattice $\set{0,1}^3$ of a three-element set), that has no lifting, with respect to the functor~$\rV$, by \emph{semiprimitive exchange rings}. In particular, it can be lifted neither by C*-algebras of real rank~$0$ nor by regular rings. In particular, the negative result of Proposition~\ref{P:NonSplit} extends both to regular rings (this could have been deduced directly from~\cite{NonMeas}) and to C*-algebras of real rank~$0$ (this is new).

\subsection*{Lifting the diagrams~$\vD$ and~$\vL$}
Besides those three reasons, we also provide two positive representation results of the abovementioned diagram~$\vD$: while Theorem~\ref{T:NoLift} implies that~$\vD$ cannot be lifted, with respect to the functor~$\rV$, by any commutative diagram of semiprimitive exchange rings, Theorem~\ref{T:LuLift} implies that a certain ``collapsed'' image, denoted by~$\vL$, of the diagram~$\vD$, can be lifted with \emph{exchange rings}, while Theorem~\ref{T:LuLiftC*} implies that~$\vL$ can be lifted with \emph{C*-algebras} (of real rank and stable rank both equal to~$1$). In order to be able to state that~$\vL$ is really a ``diagram'', we need the extended notion of diagram given in Definition~\ref{D:Diagram}.
By ``unfolding''~$\vL$ back to~$\vD$, we obtain (Proposition~\ref{P:NoLiftLu}) that~$\vD$ can also be lifted with exchange rings on the one hand, and with C*-algebras on the other hand. There is no ``best of two worlds'', because a C*-algebra is an exchange ring if{f} it has real rank~$0$, and~$\vD$ cannot be lifted by C*-algebras of real rank~$0$.

\subsection*{The journey back to the transfinite: separating the nonstable $\rK_0$-theory of exchange rings and the one of C*-algebras from the ones of regular rings and C*-algebras of real rank~$\mathbf{0}$}
The reason why we need a diagram indexed by a finite lattice lies in a quite technical result of categorical algebra by Gillibert and the author~\cite{GiWe2} called the \emph{Condensate Lifting Lemma}, CLL in short. CLL makes it possible to turn \emph{diagram counterexamples} to \emph{object counterexamples}, with possible jumps of cardinality. It works only on diagrams indexed by so-called \emph{almost join-semilattices}, in particular for finite lattices.

By applying various instances of CLL to the negative lifting versus positive lifting results of the cube~$\vD$, we obtain (cf. Theorems~\ref{T:CXAKRR0} and~\ref{T:C*CXAKRR0}) that \emph{there exists a unital exchange ring \pup{resp., a unital C*-algebra of real rank~$1$}~$R$, with stable rank~$1$ and index of nilpotence~$2$, such that~$\rV(R)$ is the positive cone of a dimension group and~$\rV(R)$ is not isomorphic to~$\rV(B)$ for any ring~$B$ which is either a C*-algebra of real rank~$0$ or a regular ring}. Due to the cardinality-jumping properties of the ``condensate'' construction underlying CLL, the exchange ring that we construct has~$\aleph_3$ elements, while the C*-algebra that we construct is $\aleph_3$-separable (it has also real rank~$1$). Both the ring and the C*-algebra are stably finite, have stable rank~$1$ and index of nilpotence~$2$, while the nonstable $\rK_0$-theory is the positive cone of a dimension group with order-unit of index~$2$.

Although the present paper is written in the language of ring theory, a substantial part of our results and proofs draw their basic inspiration from related results in universal algebra and lattice theory. For example,
\begin{itemize}
\item Corollary~\ref{C:FirstNonLift} (the ``first reason'') originates in T\r{u}ma and Wehrung \cite[Section~8]{TuWe01}, which is a non-lifting result by universal algebras with respect to the congruence lattice functor.

\item Proposition~\ref{P:NonSplit} (the ``second reason'') originates in Wehrung~\cite{NonMeas}, which solves, in the negative, representation problems with respect to various functors, including~$\rK_0$, by using the lattice structure of the set of all principal right ideals in a regular ring.

\item The cube $\vD$ (cf. Figure~\ref{Fig:DiagrD}), which is the key ingredient of Theorem~\ref{T:NoLift} (the ``third reason''), originates in a cube of Boolean semilattices introduced in T\r{u}ma and Wehrung \cite[Section~3]{TuWe01}.
\end{itemize}

\subsection*{Acknowledgment}
The author is indebted to Pere Ara, Francesc Perera, Enrique Pardo, George Elliott, Chris Phillips, and Ken Goodearl for stimulating interaction and helpful comments at various stages of this paper's preparation. Special thanks are due to Ken Goodearl for his improvement of the author's original argument, which led to Theorem~\ref{T:FirstNonLift}, and to Chris Phillips for most inspiring conversations about C*-algebras and noncommutative rings at the Seattle conference in August~2010.

\section{Organization of the paper}\label{S:Org}
All our rings will be associative, but not necessarily unital. We denote by~$1_R$, or~$1$ if~$R$ is understood, the unit of a ring~$R$.

Although we shall use standard commutative diagrams such as the one, denoted by~$\vD$, introduced in Figure~\ref{Fig:DiagrD}, we shall also use sometimes diagrams of a more general kind, a formal definition of which follows.

\begin{definition}\label{D:Diagram}
A \emph{quiver} is a quadruple $\xI=(V,E,s,t)$ where~$V$ and~$E$ (the ``vertices'' and the ``edges'', respectively) are sets and $s,t\colon E\to V$ (the ``source'' and ``target'' map, respectively). An element $e\in E$ is thus viewed as an ``arrow'' from~$s(e)$ to~$t(e)$.

A \emph{diagram} in a category~$\xC$, \emph{indexed by~$\xI$}, is a functor from~$\xI$ to~$\xC$. Formally, it is given by a pair $(\Phi_{\mathrm{o}},\Phi_{\mathrm{m}})$, where~$\Phi_{\mathrm{o}}$ (resp., $\Phi_{\mathrm{m}}$) is a map from~$V$ (resp., $E$) to the object class (resp., morphism class) of~$\xC$, such that $\Phi_{\mathrm{m}}(e)$ is a morphism from~$\Phi_{\mathrm{o}}(s(e))$ to~$\Phi_{\mathrm{o}}(t(e))$ for any $e\in E$. There is no composition of arrows in~$\xI$, thus nothing to worry about at that level.

The usual definitions of a \emph{natural transformation} and a \emph{natural equivalence} carry over to diagrams without modification. For diagrams $\Phi,\Psi\colon\xI\to\xC$, we denote by $\Phi\cong\Psi$ the natural equivalence of~$\Phi$ and~$\Psi$. For categories~$\xA$ and~$\xB$, we say that a diagram $\lift\colon\xI\to\xA$ \emph{lifts} a diagram $\Psi\colon\xI\to\xB$ with respect to a functor $\Phi\colon\xA\to\xB$ if $\Psi\cong\Phi\lift$ (cf. Figure~\ref{Fig:liftdiag}). In all our lifting problems, we will need to specify additional composition relations among the arrows in the range of~$\Gamma$. (The first such instance in this paper is Theorem~\ref{T:FirstNonLift}: the additional requirement is $h\circ s=h$.)
\begin{figure}[htb]
 \[
\xymatrixrowsep{2pc}\xymatrixcolsep{1.5pc}
\def\labelstyle{\displaystyle}
\xymatrix{
\xI\ar[rr]^{\lift}\ar[rrd]_{\Psi} && \xA\ar[d]^{\Phi}\\
&& \xB
}
 \]
\caption{The diagram $\lift$ lifts the diagram $\Psi$ with respect to $\Phi$}
\label{Fig:liftdiag}
\end{figure}
\end{definition}

In many of our applications (but not all of them), $\xA$ will be a category of rings, $\xB$ will be a category of \cm s, and~$\Phi$ will be the nonstable $\rK_0$-theory functor~$\rV$.

We shall now give a brief section by section overview of the paper.

\textbf{In Section~\ref{S:BasicMon}}, we recall the basic monoid-theoretical concepts used in the paper.

\textbf{In Section~\ref{S:rV(R)}}, we recall some basic facts about the functor~$\rV$, from rings to \cm s, describing nonstable $\rK_0$-theory, for either rings or C*-algebras. We also recall basic facts about \emph{semiprimitive} rings.

\textbf{In Section~\ref{S:BasicExch}}, we recall some basic facts about \emph{exchange rings}, including the non-unital case.

\textbf{In Section~\ref{S:FirstDiagr}}, we introduce a notion of orthogonality of elements in a \cm, then the notion of a \emph{weakly V-semiprimitive ring} (\emph{V-semiprimitive rings} will be introduced in Section~\ref{S:Vsemi}). We prove that every exchange ring is weakly V-semiprimitive. We also introduce a diagram (in the sense of Definition~\ref{D:Diagram})~$\vK$ of simplicial monoids with order-unit that has no lifting, with respect to the functor~$\rV$, by weakly V-semiprimitive rings (Theorem~\ref{T:FirstNonLift}). It follows (Corollary~\ref{C:FirstNonLift}) that the functor~$\rV$ has no right inverse from simplicial monoids with order-unit, with normalized monoid \emph{homomorphisms}, to weakly V-semiprimitive rings.

\textbf{In Section~\ref{S:NoSplit}}, we prove (cf. Proposition~\ref{P:NonSplit}) that the functor~$\rV$ has no right inverse from simplicial monoids with unit, and their \emph{embeddings}, to AF C*-algebras. The proof uses universal algebra, lattice theory, and a counterexample of cardinality~$\aleph_2$ constructed in Wehrung~\cite{NonMeas}. As Proposition~\ref{P:NonSplit} will be later superseded by stronger results, we give only an outline of the proof. Nevertheless we chose to keep it there, because a large part of the material in further sections originates from the proof of that result.

\textbf{In Section~\ref{S:MeasRing}} we introduce a tool that will prove essential for our work, the notion of a \emph{\pup{pre}measured ring}. By using some results of Section~\ref{S:rV(R)} and~\ref{S:BasicExch}, we prove (cf. Lemma~\ref{L:ReflSubcat}) that the category of \emph{measured} exchange rings is a reflective subcategory of the one of \emph{premeasured} exchange rings. We also prove (cf. Lemma~\ref{L:ReflSubRR0}) an analogous result for C*-algebras of real rank~$0$.

\textbf{In Section~\ref{S:Vsemi}}, we introduce a common generalization of semiprimitive exchange rings, principal ideal domains, and polynomial rings over fields, that we call \emph{V-semiprimitive rings}. We also observe there that not every exchange ring, and not every C*-algebra, is V-semiprimitive. The C*-algebra~$\DD$ constructed in Example~\ref{Ex:semiprnotV} will play a crucial role in further sections.

\textbf{In Section~\ref{S:Unlift}}, we introduce the main combinatorial object of the paper, namely the diagram~$\vD$. It is a $\set{0,1}^3$-indexed commutative diagram of simplicial monoids, and it originates in the proof of Proposition~\ref{P:NonSplit}. Theorem~\ref{T:NoLift} implies that~$\vD$ has no lifting, with respect to the functor~$\rV$, by any commutative diagram \emph{of V-semiprimitive} rings. This is the central non-representability result of the paper: all the other negative lifting results of the paper, with the exception of Theorem~\ref{T:FirstNonLift}, follow from that one. The proof of Theorem~\ref{T:NoLift} is direct: it does not involve any universal algebra, lattice theory, or transfinite cardinals. We also introduce an auxiliary diagram, denoted there by~$\vE$, of simplicial monoids and monoid \emph{embeddings}, such that any lifting of~$\vE$, with respect to the functor~$\forg$ introduced in Definition~\ref{D:TwoFunct}, gives rise to a lifting of~$\vD$. This implies, in particular, that the functor~$\rV$ has no right inverse from simplicial monoids with order-unit, and their \emph{embeddings}, to C*-algebras of real rank~$0$ (resp., regular rings), see Corollary~\ref{C:NoLift3}.

\textbf{In Section~\ref{S:Collapse}}, we take advantage of the many symmetries underlying the diagram~$\vD$ to collapse it to another diagram (now in the sense of Definition~\ref{D:Diagram}), denoted by~$\vL$, much simpler looking than~$\vD$, such as every lifting of~$\vL$ gives rise to a lifting of~$\vD$ (this is shown in the argument of the proof of Proposition~\ref{P:NoLiftLu}). We also prove, by a direct construction, that~$\vL$ (thus also~$\vD$) can be lifted, with respect to the functor~$\rV$, by \emph{exchange rings}. In a sense, everything in that section boils down to finding a matrix~$c$ (given in~\eqref{Eq:MatrixcExch}) satisfying the constraints given in~\eqref{Eq:cequivc0} and~\eqref{Eq:v(c)=c}.

\textbf{In Section~\ref{S:C*algLift}}, we construct a lifting of~$\vL$ by \emph{C*-algebras}. These C*-algebras have both real rank and stable rank equal to~$1$, while they have index of nilpotence~$2$. By using Theorem~\ref{T:NoLift}, it can be seen that these values are optimal.

\textbf{In Section~\ref{S:Al3}}, we take advantage of the conflict between Theorem~\ref{T:NoLift} (a non-lifting result of~$\vD$) and Proposition~\ref{P:cDliftedExch} (a lifting result of~$\vD$ by exchange rings) to construct, for every field~$K$, a unital exchange $K$-algebra~$R_K$ such that~$\rV(R_K)$, although being the positive cone of a dimension group, is never isomorphic to~$\rV(B)$ for either a C*-algebra of real rank~$0$ or a regular ring~$B$. Furthermore, $R_K$ has at most $\aleph_3+\card K$ elements, and it has index of nilpotence~$2$. In order to achieve this, we are using a fair amount of heavy machinery established earlier in the book Gillibert and Wehrung~\cite{GiWe2}. That work introduces tools of categorical algebra and infinite combinatorics, called \emph{larders} and \emph{lifters}, that are designed to turn \emph{diagram counterexamples} to \emph{object counterexamples}. These tools are effective mainly on lattice-indexed diagrams, which is the reason why we need to keep the cube~$\vD$ instead of the better looking~$\vL$. The results of~\cite{GiWe2} being now used as a toolbox, it should be possible to read the proofs of Section~\ref{S:Al3} without needing to ingest large amounts of larder technology. In particular, the ring-theoretical and C*-algebraic arguments used there are elementary.

\textbf{In Section~\ref{S:Al3Sep}}, we show how to extend to C*-algebras what we did for exchange rings in Section~\ref{S:Al3}. In particular, we construct a unital C*-algebra~$E$, of real rank and stable rank both equal to~$1$, such that~$\rV(E)$, although being the positive cone of a dimension group, is never isomorphic to~$\rV(B)$ for either a C*-algebra of real rank~$0$ or a regular ring~$B$. Furthermore, $E$ is $\aleph_3$-separable, and it has index of nilpotence~$2$.

\textbf{In Section~\ref{S:Pbs}}, we list some open problems.

We represent on Figure~\ref{Fig:RingClass} the implications between the main attributes of rings used in the paper.

\begin{figure}[htb]
 \[
\xymatrixrowsep{2pc}\xymatrixcolsep{1.5pc}
\def\labelstyle{\displaystyle}
 \xymatrix{
 & \text{Weakly V-semiprimitive} & \\
 \text{exchange}\ar@{=>}[ru] & \text{V-semiprimitive}\ar@{=>}[u] & 
 \text{semiprimitive}\\
 & \text{semiprimitive exchange}\ar@{=>}[lu]\ar@{=>}[ru]\ar@{=>}[u]
 & \text{C*-algebra}\ar@{=>}[u]\\
 & \text{regular}\ar@{=>}[u] &
 \text{C*-algebra of real rank }0\ar@{=>}[lu]\ar@{=>}[u] &
 }
 \]
\caption{Attributes of rings}
\label{Fig:RingClass}
\end{figure}

\section{Basic notions about \cm s, refinement monoids, dimension groups}\label{S:BasicMon}

We refer to Goodearl~\cite{Gpoag} for partially ordered abelian groups, interpolation groups, dimension groups. We set $\ZZ^+:=\set{0,1,2,\dots}$ and $\NN:=\ZZ^+\setminus\set{0}$.

We shall write all our \cm s additively.
A submonoid~$I$ of a \cm~$M$ is an \emph{o-ideal} of~$M$ if $x+y\in I$ implies that $x\in I$ and $y\in I$, for all $x,y\in I$. We say that~$M$ is \emph{conical} if $\set{0}$ is an o-ideal. Every \cm~$M$ can be endowed with its \emph{algebraic preordering}~$\leq$, defined by $x\leq y$ if there exists $z\in M$ such that $y=x+z$. We say that an element~$e\in M$ is an \emph{order-unit of~$M$} if for each $x\in M$ there exists $n\in\NN$ such that $x\leq ne$. For $n\in\ZZ^+$, an element $a\in M$ has \emph{index at most~$n$} if $(n+1)x\leq a$ implies that $x=0$, for each $x\in M$.

If~$I$ is an o-ideal of a \cm~$M$, the binary relation~$\equiv_I$ on~$M$ defined by
 \begin{equation}\label{Eq:M/Iequiv}
 a\equiv_Ib\ \Longleftrightarrow\ (\exists x,y\in I)(a+x=b+y)\,,\quad
 \text{for all }a,b\in M\,,
 \end{equation}
is a monoid congruence of~$M$, and $M/I:=M/{\equiv_I}$ is a conical \cm.

The \emph{Grothendieck group} of~$M$ is the initial object in the category of all monoid homomorphisms from~$M$ to a group. It consists of an abelian group~$G$ with a monoid homomorphism $\eps\colon M\to G$, and we shall always endow it with the unique compatible preordering with positive cone~$\eps(M)$ (i.e., $x\leq y$ if{f} $y-x\in\eps(M)$).

A \cm~$M$ has the \emph{refinement property}, or is a \emph{refinement monoid} (cf. Dobbertin~\cite{Dobb82}), if for all $a_0,a_1,b_0,b_1\in M$ such that $a_0+a_1=b_0+b_1$, there are $c_{j,k}\in M$ (for $j,k\in\set{0,1}$) such that $a_j=c_{j,0}+c_{j,1}$ and $b_j=c_{0,j}+c_{1,j}$ for each $j\in\set{0,1}$. A very special class of refinement monoids consists of the positive cones of \emph{dimension groups}. A partially ordered abelian group~$G$ is a dimension group if it is directed (i.e., $G=G^++(-G^+)$), unperforated (i.e., $mx\geq 0$ implies $x\geq 0$ for each $m\in\NN$ and each $x\in G$) and the positive cone~$G^+$ satisfies refinement. A \emph{simplicial group} is a group of the form~$\ZZ^n$, for a natural number~$n$, ordered componentwise. Every simplicial group is a dimension group. Conversely, every dimension group is a direct limit of simplicial groups (this follows from Effros, Handelman, and Shen~\cite{EHS}, but the form of this result established earlier in~Grillet~\cite{Gril76} is easily seen to be equivalent). We shall also call a \emph{simplicial monoid} the positive cone of a simplicial group (i.e., $(\ZZ^+)^n$ for some $n\in\ZZ^+$).

\begin{definition}\label{D:PointedMon}
For a subcategory~$\xC$ of the category~$\CM$ of conical \cm s with monoid homomorphisms, we shall denote by~$\xC(1)$ the category of \emph{monoids with order-unit} in~$\xC$: the objects of~$\xC(1)$ are the pairs $(M,a)$, where~$M$ is an object of~$\xC$ and~$a$ is an order-unit of~$M$, and a morphism from $(M,a)$ to $(N,b)$ is a monoid homomorphism $f\colon M\to N$ such that $f(a)=b$ (i.e., $f$ is \emph{normalized}).
\end{definition}

\section{Basic notions about rings: nonstable $\rK_0$-theory, semiprimitivity}\label{S:rV(R)}

\begin{definition}\label{D:CatRings1}
Not all of our rings will be unital, and consequently not all our ring homomorphisms will preserve the unit. For a subcategory~$\xR$ of the category~$\Ring$ of rings and ring homomorphisms, we shall denote by~$\xR(1)$ the subcategory of~$\xR$ consisting of all unital rings in~$\xR$, with unital ring homomorphisms, possibly with any additional structure present in~$\xR$.

For example, we shall denote by~$\xRR$ the category of all C*-algebras of real rank~$0$ with \Chom s, and by $\xRR(1)$ the category of all unital C*-algebras of real rank~$0$ with unital \Chom s. A useful subcategory of~$\xRR$ is the full subcategory~$\AF$ of all the \emph{approximately finite}, or \emph{AF}, C*-algebras. By definition, a C*-algebra is AF if it is a (possibly uncountable) C*-direct limit of finite-dimensional C*-algebras (not all our AF algebras will be separable).
\end{definition}

For basic concepts about (stable or nonstable) K-theory of rings and C*-algebras, we refer to Goodearl~\cite[Section~4]{Good94} for the unital case, Ara~\cite[Section~3]{Ara97} for the general case, Blackadar~\cite[Chapter~3]{Black98} for the case of C*-algebras. For basic notions about C*-algebras we refer to Murphy~\cite{Murp}.

For a ring~$R$ (associative but not necessarily unital), we shall often identify the ring~$\Mat_n(R)$ of all $n\times n$ matrices over~$R$ with its image, \emph{via} the embedding $x\mapsto\begin{pmatrix}x&0\\ 0&0\end{pmatrix}$, in $\Mat_{n+1}(R)$. Hence the elements of the (non-unital) ring $\Mat_\infty(R):=\bigcup_{n\in\NN}\Mat_n(R)$ can be identified with the countably infinite matrices with entries from~$R$ and only finitely many nonzero entries.

We define $\Idp_n(R)$ as the set of all idempotent elements of $\Mat_n(R)$, for each $n\in\NN\cup\set{\infty}$. For a ring homomorphism $f\colon R\to S$, we denote by $\Idp_n(f)$ the map from $\Idp_n(R)$ to $\Idp_n(S)$ defined by the rule
 \[
 \Idp_n(f)\bigl((a_{j,k})_{1\leq j,k\leq n}\bigr):=
 \bigl(f(a_{j,k})\bigr)_{1\leq j,k\leq n}\,,\quad
 \text{for each }(a_{j,k})_{1\leq j,k\leq n}\in\Idp_n(R)\,.
 \]
We shall sometimes write $f(a)$ instead of $\Idp_n(f)(a)$, for $a\in\Idp_n(R)$.

Idempotents $a,b\in\Idp_\infty(R)$ are \emph{orthogonal} if $ab=ba=0$.

The (Murray-von Neumann, algebraic) \emph{equivalence} of idempotents $a,b\in\Mat_\infty(R)$ is defined by $a\sim b$ if{f} there are $x,y\in\Mat_\infty(R)$ such that $a=xy$ and $b=yx$. Replacing~$x$ by $axb$ and $y$ by $bya$, we see that we can assume that $x=axb$ and $y=bya$. We denote by~$[a]_R$, or $[a]$ if~$R$ is understood, the $\sim$-equivalence class of~$a$. For all $a,b\in\Idp_\infty(R)$ there exists $b'\sim b$ such that $ab'=b'a=0$; it follows that equivalence classes can be added, \emph{via} the formula
 \begin{equation}\label{Eq:DefAdd[]}
 [a]+[b]:=[a+b]\text{ if }ab=ba=0\,,\text{ for all }a,b\in\Idp_\infty(R)\,.
 \end{equation}
The set $\rV(R):=\setm{[a]}{a\in\Idp_\infty(R)}$, endowed with the addition defined in~\eqref{Eq:DefAdd[]}, is a conical \cm. If~$R$ is unital, then~$[1_R]$ is an order-unit of~$\rV(R)$ and~$\rV(R)$ is isomorphic to the monoid of all isomorphism types of finitely generated projective right (resp., left) $R$-modules, with addition defined by $[X]+[Y]=[X\oplus Y]$ (where $[X]$ now denotes the isomorphism class of~$X$). We shall also set $\rVu(R):=(\rV(R),[1_R])$. 

The assignment $R\mapsto\rV(R)$ can be extended to a \emph{functor} from~$\Ring$ to~$\CM$: for a homomorphism $f\colon R\to S$ of rings (resp., of unital rings), there is a unique monoid homomorphism $\rV(f)\colon\rV(R)\to\rV(S)$ such that $\rV(f)([a]_R)=[\Idp_\infty(f)(a)]_S$ for each $a\in\Idp_\infty(R)$. The functor~$\rV$ preserves all direct limits (i.e., in categorical language, \emph{directed colimits}) and finite products: for instance, $\rV(\varinjlim_{j\in I}R_j)\cong\varinjlim_{j\in I}\rV(R_j)$ (for direct limits) and $\rV(R\times S)\cong\rV(R)\times\rV(S)$. All these facts extend to the unital case, giving a functor~$\rVu\colon\Ring(1)\to\CM(1)$, with $\rV^1(R):=(\rV(R),[1_R])$ for every unital ring~$R$.

If~$R$ is unital, $\rK_0(R)$ is defined as the (preordered) Grothendieck group of~$\rV(R)$.





For a two-sided ideal~$I$ of a ring~$R$ and the inclusion map $f\colon I\into R$, the map~$\rV(f)$ embeds~$\rV(I)$ into~$\rV(R)$. We shall thus often identify~$\rV(I)$ with its image, under~$\rV(f)$, in~$\rV(R)$. It is an \emph{o-ideal} of~$\rV(R)$ (cf. Section~\ref{S:BasicMon}).

\begin{lemma}[folklore]\label{L:VmeasDec}
Let $R$ be a ring, let $c\in\Idp_\infty(R)$, and let $\alpha,\beta\in\rV(R)$. If $[c]=\alpha+\beta$, then there are orthogonal idempotents $a,b\in\Idp_\infty(R)$ such that $c=a+b$, $[a]=\alpha$, and $[b]=\beta$.
\end{lemma}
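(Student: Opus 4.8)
We are given an idempotent $c\in\Idp_\infty(R)$ with $[c]=\alpha+\beta$ in $\rV(R)$, and we must split $c$ itself as an orthogonal sum $c=a+b$ of idempotents whose classes are exactly $\alpha$ and $\beta$. The natural approach is to first realize $\alpha$ and $\beta$ by \emph{some} idempotents, then use the hypothesis $[c]=\alpha+\beta$ together with a Murray--von Neumann equivalence to transport that decomposition \emph{inside} $c$, i.e.\ to replace the abstract representatives by genuine subidempotents of $c$ that are orthogonal and sum to $c$. The whole point is that an equality in $\rV(R)$ is only equivalence up to $\sim$, so the work lies in conjugating an external orthogonal pair so that it lands exactly on $c$.

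Let me sketch the steps. Pick idempotents $a_0,b_0\in\Idp_\infty(R)$ with $[a_0]=\alpha$ and $[b_0]=\beta$; by the remark following~\eqref{Eq:DefAdd[]} we may arrange $a_0b_0=b_0a_0=0$, so $d:=a_0+b_0$ is an idempotent with $[d]=\alpha+\beta=[c]$. Thus $c\sim d$, meaning there are $x,y\in\Mat_\infty(R)$ with $c=xy$ and $d=yx$, and after replacing $x$ by $cxd$ and $y$ by $dyc$ we may assume $x=cxd$ and $y=dyc$. These identities give $xd=x=cx$ and $dy=y=yc$, so $x$ and $y$ are honest ``partial isometries'' between the idempotents $c$ and $d$. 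Now set
\[
a:=xa_0y\,,\qquad b:=xb_0y\,.
\]
The plan is to verify directly that $a,b$ do the job.

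\textbf{Verifications.} First, $a$ is idempotent: using $yx=d=a_0+b_0$ and $a_0^2=a_0$, $a_0b_0=b_0a_0=0$, one computes $a^2=x a_0(yx)a_0 y=xa_0 d a_0 y=xa_0 y=a$, and symmetrically $b^2=b$. Orthogonality follows the same way, since $a_0 d b_0=a_0(a_0+b_0)b_0=0$ gives $ab=xa_0(yx)b_0y=xa_0 d b_0 y=0$, and likewise $ba=0$. For the sum, $a+b=x(a_0+b_0)y=xdy=x y=c$ (using $xd=x$ and then $dy=y$, so $xdy=xy=c$). Finally the classes are correct: $a=(xa_0)(a_0 y)$ while $a_0 y\cdot x a_0=a_0(yx)a_0=a_0 d a_0=a_0$ shows $a\sim a_0$, whence $[a]=[a_0]=\alpha$, and symmetrically $[b]=[b_0]=\beta$. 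This completes the argument.

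\textbf{Main obstacle.} No step is deep, but the delicate point is the normalization of $x,y$ to the ``trimmed'' forms $x=cxd$, $y=dyc$ \emph{before} defining $a$ and $b$: without it the identities $xd=x=cx$ and $dy=y=yc$ fail, and then neither the idempotence of $a$ and $b$ nor the crucial telescoping $a+b=xdy=c$ goes through. The excerpt already records precisely this trimming trick in the definition of $\sim$ (replacing $x$ by $axb$ and $y$ by $bya$), so the real content is just to recognize that conjugating the \emph{orthogonal} pair $(a_0,b_0)$ by the partial isometry $(x,y)$ preserves orthogonality and additivity while carrying everything inside $c$; the routine but slightly fiddly part is keeping track of which of the six defining identities is invoked at each matrix manipulation.
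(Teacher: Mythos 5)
Your proof is correct and is essentially the paper's own argument: pick orthogonal representatives $u,v$ of $\alpha,\beta$, choose $x,y$ with $c=xy$ and $u+v=yx$, and set $a:=xuy$, $b:=xvy$. The only discrepancy is your closing remark that the normalization $x=cxd$, $y=dyc$ is indispensable --- in fact it is superfluous, since idempotence, orthogonality, and the telescoping $a+b=xdy=x(yx)y=(xy)^2=c$ all follow from $c=xy$ and $d=yx$ alone (which is why the paper's three-line proof never normalizes) --- but carrying it along does no harm.
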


\begin{note}
Observe, in particular, that if $c\in R$, then $a,b\in R$.
\end{note}

\begin{proof}
There are orthogonal idempotents $u,v\in\Idp_\infty(R)$ such that $[u]=\alpha$ and $[v]=\beta$. As $[u+v]=\alpha+\beta=[c]$, there are $x,y\in\Mat_\infty(R)$ such that $c=xy$ while $u+v=yx$. The matrices $a:=xuy$ and $b:=xvy$ are as required.
\end{proof}

The following result is observed, for unital exchange rings~$R$ and $E\subseteq R$, in the course of the proof of Pardo \cite[Teorema~4.1.7]{Enric}. It is contained, in full generality, in Ara and Goodearl~\cite[Proposition~10.10]{ArGo11}.

\begin{lemma}\label{L:GenV(I)}
Let~$R$ be a ring, let $E\subseteq\Idp_\infty(R)$, and denote by~$I$ the two-sided ideal of~$R$ generated by the entries of all the elements of~$E$. Then~$\rV(I)$ is the o-ideal of~$\rV(E)$ generated by $\setm{[x]_R}{x\in E}$.
\end{lemma}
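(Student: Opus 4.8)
The plan is to prove equality of two o-ideals of $\rV(R)$ (reading the codomain $\rV(E)$ in the statement as $\rV(R)$): on one side $\rV(I)$, identified with its image in $\rV(R)$, and on the other the o-ideal $J$ generated by $\setm{[x]_R}{x\in E}$. Throughout I would use that in a conical monoid the o-ideal generated by a set $S$ is the downward closure, under the algebraic preordering, of the submonoid generated by $S$; thus $J=\setm{\alpha}{\alpha\leq[x_1]+\dots+[x_k]\text{ for some }x_1,\dots,x_k\in E}$. The easy inclusion $J\subseteq\rV(I)$ is immediate: for $x\in E$ the entries of $x$ are among the generators of $I$, so $x\in\Idp_\infty(I)$ and hence $[x]_R\in\rV(I)$; since $\rV(I)$ is an o-ideal of $\rV(R)$ containing every generator of $J$, it contains $J$.

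The substance is the reverse inclusion $\rV(I)\subseteq J$. I would take $\alpha=[a]_R$ with $a\in\Idp_\infty(I)$, say $a$ an $N\times N$ idempotent. Each of its finitely many entries is a finite sum of products $ugv$ (with $u,v\in R^{1}$, the unitalization of $R$, and $g$ an entry of some element of $E$), so only finitely many $x_1,\dots,x_k\in E$ occur. Forming the single idempotent $f:=x_1\oplus\dots\oplus x_k$, of some size $n$, one has $[f]=[x_1]+\dots+[x_k]$, and every entry of $a$ lies in the two-sided ideal $I_0$ generated by the entries of $f$. Here I would record the non-unital point: since $f$ is idempotent, $f_{ij}=\sum_{r,s}f_{ir}f_{rs}f_{sj}$, which rewrites each $ugv$ with coefficients drawn from $R$ rather than from $R^{1}$, so that $I_0=R\{f_{ij}\}R$ (finite sums). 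It then suffices to prove $\alpha\leq m[f]$ for some $m\in\NN$, as $m[f]\in J$.

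The heart of the argument, which I expect to be the main obstacle, is a factorization of $a$ through a finite multiple of $f$. Writing each entry $a_{pq}=\sum_l u_{l}^{pq}\,f_{i_lj_l}\,v_l^{pq}$ and distributing the coefficients into rectangular matrices, I can produce $S\in\Mat_{N\times nL}(R)$ and $T\in\Mat_{nL\times N}(R)$ with $a=SFT$, where $F:=f^{\oplus L}$ is idempotent and $[F]=L[f]$ ($L$ being the total number of terms). I would then exploit idempotency of $a$: with $u:=SF$ and $v:=FT$ one has $a=uv$ (using $F^2=F$), and setting $e:=vau=F(TaS)F$ gives an idempotent, since $e^2=va(uv)au=va^3u=vau=e$. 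Moreover $Fe=eF=e$, so $e\leq F$, while $a=(au)(va)$ and $e=(va)(au)$ exhibit $a\sim e$. Hence $[a]=[e]\leq[F]=L[f]$, which lies in $J$, giving $\alpha\in J$ and completing the reverse inclusion.

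The only genuinely delicate points are thus the bookkeeping that turns the entrywise expansion of $a$ into the block factorization $a=SFT$ (constructing $S,T$ from the coefficient data $u_l^{pq},v_l^{pq}$), and the non-unital absorption $I_0=R\{f_{ij}\}R$; once these are in place, the passage $a\sim e\leq F$ is the short idempotent computation above, and the reduction to finitely many elements of $E$ together with the description of the generated o-ideal finishes the proof.
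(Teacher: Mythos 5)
Your argument is correct, but a like-for-like comparison with the paper is not possible: the paper gives no proof of Lemma~\ref{L:GenV(I)} at all, quoting it instead from Pardo's thesis \cite{Enric} (for unital exchange rings with $E\subseteq R$) and from Ara and Goodearl \cite[Proposition~10.10]{ArGo11} (in full generality). Your self-contained proof follows the standard route of that literature, and the genuinely delicate points are all handled: the reading of ``$\rV(E)$'' as $\rV(R)$ (evidently a typo in the statement); the reduction to finitely many $x_1,\dots,x_k\in E$; the use of $f=f^3$ to replace unitalization coefficients $u,v\in R^1$ by coefficients in $R$, so that $S$ and $T$ genuinely lie in $\Mat_\infty(R)$ (this is exactly where non-unitality bites); the block factorization $a=SFT$ with $F=f^{\oplus L}$; and the idempotent computation $e:=vau=F(TaS)F$, which satisfies $e^2=e$, $e=eF=Fe$, $a=(au)(va)$ and $e=(va)(au)$, whence $[a]_R=[e]_R\leq[F]_R=L[f]_R\in J$. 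Two facts are used tacitly and deserve to be recorded explicitly. First, in a conical \cm\ the o-ideal generated by a set is the downward closure, under the algebraic preordering, of the submonoid it generates (downward closures of submonoids are submonoids, and o-ideals are exactly the downward closed submonoids); you state this, and it is easily checked. Second, block-diagonal placement is compatible with Murray--von Neumann equivalence over a non-unital ring, which is what gives $[x_1\oplus\cdots\oplus x_k]_R=[x_1]_R+\cdots+[x_k]_R$ and $[f^{\oplus L}]_R=L[f]_R$; this follows from the factorization $\begin{pmatrix}0&e\\0&0\end{pmatrix}\begin{pmatrix}0&0\\e&0\end{pmatrix}=\begin{pmatrix}e&0\\0&0\end{pmatrix}$ and $\begin{pmatrix}0&0\\e&0\end{pmatrix}\begin{pmatrix}0&e\\0&0\end{pmatrix}=\begin{pmatrix}0&0\\0&e\end{pmatrix}$, whose entries are entries of $e$ itself and hence lie in $R$, so no unit is needed. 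With those two remarks made explicit, your proof is complete and could stand in place of the paper's citation.
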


A ring~$R$ has \emph{index of nilpotence at most~$n$} if $x^{n+1}=0$ implies that $x^n=0$ for all $x\in R$. It is proved in Yu~\cite[Corollary~4]{Yu95} that every unital exchange ring with finite index of nilpotence has stable rank~$1$. The following easy result gives some more information.

\begin{lemma}\label{L:BasicVcAK}
Let~$n$ be a positive integer and let~$R$ be a unital ring with index of nilpotence at most~$n$. Then~$[1_R]$ has index at most~$n$ in~$\rV(R)$. Furthermore, if~$\rV(R)$ satisfies the refinement property, then it is the positive cone of a dimension group.
\end{lemma}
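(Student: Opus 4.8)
The plan is to prove the two assertions separately, the first by an explicit nilpotent ``shift'' construction and the second by reducing to two structural properties of $\rV(R)$. For the index statement I would argue by contradiction: suppose $(n+1)x \leq [1_R]$ in $\rV(R)$ with $x \neq 0$, and write $[1_R] = (n+1)x + \beta$. Applying Lemma~\ref{L:VmeasDec} repeatedly (together with its accompanying Note, which is legitimate because $1_R \in R$), I would split $1_R$ into pairwise orthogonal idempotents $1_R = e_1 + \cdots + e_{n+1} + f$ lying in $R$, with $[e_i] = x$ for all $i$ and $[f] = \beta$. Since $x \neq 0$, each $e_i$ is a nonzero idempotent, and the $e_i$ are pairwise equivalent. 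Because $e_i,e_{i+1} \in R$ (not merely in $\Mat_\infty(R)$), each equivalence $e_i \sim e_{i+1}$ can be implemented inside $R$, by elements $x_i = e_i x_i e_{i+1}$ and $y_i = e_{i+1} y_i e_i$ with $x_i y_i = e_i$ and $y_i x_i = e_{i+1}$. Setting $w := x_1 + \cdots + x_n \in R$, orthogonality of the $e_i$ collapses all ``non-shifting'' products, giving $w^{n+1} = 0$ while $w^n = x_1 x_2 \cdots x_n$; the latter is nonzero since $(x_1 \cdots x_n)(y_n \cdots y_1) = e_1 \neq 0$. This contradicts $R$ having index of nilpotence at most $n$, so $x = 0$ and $[1_R]$ has index at most $n$.

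For the second assertion I would invoke the standard fact that a conical commutative monoid is isomorphic to the positive cone of a dimension group precisely when it is cancellative, unperforated, and satisfies refinement; the dimension group is then the Grothendieck group $\rK_0(R)$ ordered by the cone $\rV(R)$, which is automatically directed and enjoys Riesz interpolation. As $\rV(R)$ is always conical and satisfies refinement by hypothesis, the task reduces to proving that $\rV(R)$ is cancellative and unperforated. Here I would first promote the first part: the shift construction shows more generally that no matrix ring $\Mat_K(R)$ can contain $m+1$ pairwise orthogonal, pairwise equivalent nonzero idempotents, where $m$ denotes the index of nilpotence of $\Mat_K(R)$ (still finite, by Klein's theorem on bounded index under matrix extension). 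Since any witness to direct infiniteness lives in some $\Mat_K(R)$, it follows that $R$ is stably finite, that is, every element of $\rV(R)$ is finite.

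The hard part will be to upgrade stable finiteness, in the presence of refinement and of the finite index of $[1_R]$, to full cancellation and unperforation: refinement alone grants neither, since both non-separative and perforated refinement monoids exist. I would obtain cancellation by first extracting separativity from the bounded-index hypothesis --- this is exactly where the finiteness must enter, in the spirit of the separativity theory of Ara, Goodearl, O'Meara and Pardo --- and then clearing the separativity defect using stable finiteness together with the order-unit $[1_R]$, via the reduction of cancellation of an arbitrary element $c$ (which satisfies $c \leq m[1_R]$) to cancellation of a multiple of $[1_R]$. Unperforation, equivalently the torsion-freeness of $\rK_0(R)$ together with the absence of perforation, I would likewise derive from the finite index of $[1_R]$ through the Riesz-decomposition combinatorics supplied by refinement. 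Granting these two properties, the desired conclusion follows from the reduction above.
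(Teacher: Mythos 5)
Your first half is correct and is essentially the paper's own argument. The paper realizes the shift as a right-module endomorphism $\varphi$ of $R_R$ sending $e_jR$ isomorphically onto $e_{j+1}R$ and killing the complement, then sets $x:=\varphi(1)$ and runs the same computation $\varphi^{n+1}=0\Rightarrow x^{n+1}=0\Rightarrow x^n=0\Rightarrow\varphi^n=0\Rightarrow e_n=0$; your element $w=x_1+\cdots+x_n$ with $x_i=e_ix_ie_{i+1}$ is exactly the matrix of that endomorphism, and your use of the Note to Lemma~\ref{L:VmeasDec} to keep the idempotents inside $R$ is the same device the paper uses. This part is fine.

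The second half has a genuine gap. You correctly reduce the claim to showing that $\rV(R)$ is cancellative and unperforated, but you never prove either: the passage beginning ``The hard part will be to upgrade stable finiteness\dots'' is a plan, not an argument. Extracting separativity from the bounded-index hypothesis, then ``clearing the separativity defect,'' then deriving unperforation ``through the Riesz-decomposition combinatorics supplied by refinement'' are precisely the nontrivial steps, and none of them is carried out; in fact this missing content is exactly what the paper's proof supplies by citation, namely Wehrung \cite[Corollary~3.12]{WDim} (in a refinement monoid, if some order-unit has finite index then every element has finite index) and \cite[Proposition~3.13]{WDim} (a conical refinement monoid in which every element has finite index is the positive cone of a dimension group). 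Note also that your detour through stable finiteness is both shaky and unnecessary: it leans on an unverified external claim (``Klein's theorem'') that a finite index of nilpotence for $R$ yields a finite index of nilpotence for every $\Mat_K(R)$, which is a delicate ring-theoretic assertion (matrix rings over nil rings are Koethe-conjecture territory, and bounded index does not obviously survive), whereas the paper's route never leaves the monoid $\rV(R)$: once $[1_R]$ is known to have index at most~$n$, everything else is monoid theory applied to the order-unit $[1_R]$. To repair your proof you would either have to import the two cited results from \cite{WDim}, or reprove them, which is a substantial undertaking rather than a routine completion of your sketch.
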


\begin{proof}
Let $\xi\in\rV(R)$ such that $(n+1)\xi\leq[1]$. By Lemma~\ref{L:VmeasDec}, there are orthogonal idempotents~$e_0$, \dots, $e_n$, $a$ of~$R$ such that $1=a+\sum_{i=0}^ne_j$ while $[e_j]=\xi$ for each~$j\leq n$. For each $j<n$, let $\varphi_j\colon e_jR\to e_{j+1}R$ be an isomorphism of right $R$-modules, and denote by~$\varphi$ the unique endomorphism of~$R_R$ extending all the~$\varphi_j$ such that $\varphi(a)=\varphi(e_n)=0$. Setting $x:=\varphi(1)$, we obtain that $\varphi^{n+1}=0$, thus $x^{n+1}=0$, and thus, by assumption, $x^n=0$, so $\varphi^n=0$, and so, as~$\varphi^n(e_0R)=e_nR$, we get $e_n=0$, therefore $\xi=0$. This shows that~$[1]$ has index at most~$n$ in~$\rV(R)$.

Suppose now that~$\rV(R)$ is a refinement monoid. As~$[1]$ is an order-unit in that monoid, it follows that every element of~$\rV(R)$ has finite index (cf. Wehrung \cite[Corollary~3.12]{WDim}), and so~$\rV(R)$ is the positive cone of a dimension group (cf. Wehrung \cite[Proposition~3.13]{WDim}).
\end{proof}

In case~$R$ is a C*-algebra, each $\Mat_n(R)$, for $n\in\NN$, is also a C*-algebra, and it is often convenient to replace \emph{algebraic equivalence of idempotents} by \emph{*-equivalence of projections} (a \emph{projection} is a self-adjoint idempotent), namely,
 \[
 a\simst b\quad\text{if}\quad(\exists x)(a=xx^*\text{ and }b=x^*x)\,,\quad
 \text{for all projections }a,b\,.
 \]
This can be done, because every idempotent in a C*-algebra is equivalent to a projection (and even generates the same right ideal, cf. Kaplansky \cite[Theorem~26, p.~34]{Kapl}) and two projections are equivalent if{f} they are *-equivalent (cf. Kaplansky \cite[Theorem~27, p.~35]{Kapl}).

\begin{lemma}\label{L:IbarC*}
Let~$L$ be a left ideal in a C*-algebra~$A$. Then every projection of~$\ol{L}$ is equivalent to some projection of~$L$.
\end{lemma}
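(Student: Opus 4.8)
The plan is to manufacture, from the given projection $p\in\ol{L}$, a single partial isometry $w\in L$ with $ww^*=p$; then $q:=w^*w$ will automatically be a projection lying in $L$ and $*$-equivalent to $p$. The whole point is to exploit the left-ideal property $A\cdot L\subseteq L$: anything of the form (element of $A$)$\,\cdot\,$(element of $L$) stays in $L$, so I will build $w$ and $q$ by left-multiplying the chosen element of $L$, never by right-multiplying (which would only land me in $\ol{L}$).

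First I would reduce to a good approximant. Since $p\in\ol{L}$, choose $x_0\in L$ with $\|x_0-p\|<1/3$ and replace it by $x:=px_0$. Because $L$ is a left ideal, $x\in L$, and $\|x-p\|=\|p(x_0-p)\|\le\|x_0-p\|<1/3$, while now $px=x$. The role of the relation $px=x$ is to place the relevant element in a corner: setting $c:=xx^*$, one checks from $px=x$ that $pc=cp=c$, so $c$ is a positive element of the corner $pAp$, which is a unital C*-algebra with unit~$p$. A routine estimate gives $\|c-p\|\le 2\|x-p\|+\|x-p\|^2<1$, whence $c$ is invertible in $pAp$; let $c^{-1/2}\in pAp$ denote the inverse square root computed in that corner.

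The main computation is then immediate. Put $w:=c^{-1/2}x$. Since $c^{-1/2}\in A$ and $x\in L$, we get $w\in L$. Working inside the commutative C*-subalgebra generated by $c$ in $pAp$, we compute $ww^*=c^{-1/2}(xx^*)c^{-1/2}=c^{-1/2}c\,c^{-1/2}=p$. Thus $ww^*$ is a projection; by the standard fact that $ww^*$ being a projection forces $w$ to be a partial isometry (so that $w=ww^*w$ and $w^*w$ is a projection), we conclude that $q:=w^*w$ is a projection with $p=ww^*\simst w^*w=q$. Finally $q=w^*\cdot w\in A\cdot L\subseteq L$, so $q$ is a projection of $L$ equivalent to~$p$, as required.

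The one delicate point, and the reason for the two preliminary maneuvers, is the invertibility of $c$ in $pAp$: arranging $px=x$ is what guarantees $c\in pAp$ (so that the inverse square root is taken in a unital corner, not in the possibly non-unital algebra~$A$), and choosing $x$ within $1/3$ of $p$ is what guarantees $\|c-p\|<1$ in that corner. Everything downstream is then just the elementary partial-isometry argument together with the left-ideal absorption $A\cdot L\subseteq L$ invoked twice (once for $w$, once for $q$).
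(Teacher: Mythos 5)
Your proof is correct, but it follows a genuinely different route from the paper's. The paper first replaces the approximant by a \emph{self-adjoint} element $a\in L$ with $\norm{e-a}\leq 1/3$, invokes the spectral-gap lemma of R\o rdam--Larsen--Laustsen to locate the spectrum of~$a$ in $[-1/3,1/3]\cup[2/3,4/3]$, and applies continuous functional calculus to produce a projection $b:=f(a)a\in L$ with $\norm{e-b}\leq 2/3<1$; the equivalence $e\sim b$ then comes from the standard fact that projections at distance less than~$1$ are equivalent. You instead never perturb to a nearby projection: you place $c:=xx^*$ in the corner $pAp$ (using $px=x$, which is exactly what the left-ideal property lets you arrange), invert it there, and produce an explicit partial isometry $w:=c^{-1/2}x\in L$ with $ww^*=p$ exactly, so that $q:=w^*w\in A\cdot L\subseteq L$ is the desired projection and $p\simst q$ is witnessed concretely by~$w$. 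Your approach buys self-containedness (no citation of the spectral lemma, only invertibility below distance~$1$ from the unit and the elementary partial-isometry fact) and yields slightly more, namely a partial isometry in~$L$ implementing the equivalence on the nose; it also sidesteps the small unproved step in the paper, namely the existence of a \emph{self-adjoint} approximant in a left ideal that is not $*$-closed. The paper's approach buys brevity modulo its citations, and its perturbation-of-projections technique is the same one reused later in the paper (e.g., in the separable-subalgebra argument of Lemma~\ref{L:cBRightLard}), which is presumably why the author phrased it that way. Both arguments deliver $*$-equivalence, which by Kaplansky's theorem (cited in Section~\ref{S:rV(R)}) is the same as Murray--von Neumann equivalence for projections.
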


\begin{proof}
Let~$e$ be a projection of~$\ol{L}$. There exists a self-adjoint $a\in L$ such that $\norm{e-a}\leq1/3$. By R\o rdam, Larsen, and Laustsen~\cite[Lemma~2.2.3]{RLLa}, the spectrum of~$a$ is contained in $[-1/3,1/3]\cup[2/3,4/3]$. Let $f\colon\RR\to\RR$ the unique continuous function such that $f(x)=0$ for each $x\leq1/3$, $f(x)=1/x$ for each $x\geq 2/3$, and~$f$ is affine on $[1/3,2/3]$. Observe that $f(x)x=0$ for $x\in[-1/3,1/3]$ and $f(x)x=1$ for $x\in[2/3,4/3]$, thus, using continuous function calculus, $b:=f(a)a$ is a projection. It belongs to~$L$ as $a\in L$, and $\norm{a-b}\leq1/3$. Now $\norm{e-b}\leq2/3$, so $e\sim b$.
\end{proof}

\subsection*{Semiprimitive rings} It is known (cf. Jacobson~\cite[Theorem~1]{Jaco45}, Herstein \cite[Theorem~1.2.3]{Hers}) that the Jacobson radical~$\rJ(R)$ of a ring (not necessarily unital)~$R$ is the largest two-sided ideal~$J$ of~$R$ such that $(\forall x\in J)(\exists s\in R)(x+s-sx=0)$. This concept is left-right symmetric. A ring~$R$ is \emph{semiprimitive} (\emph{semi-simple} in Jacobson~\cite[Section~2]{Jaco45} or Herstein~\cite[page~16]{Hers}, but we are using the seemingly more common terminology) if $\rJ(R)=\set{0}$. Not every exchange ring is semiprimitive (consider the upper triangular matrices over a field, cf. Example~\ref{Ex:TrMat}).




The following is contained in Jacobson~\cite[Theorem~8]{Jaco45}.

\begin{proposition}\label{P:RegSemipr}
Every regular ring is semiprimitive.
\end{proposition}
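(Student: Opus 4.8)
The plan is to show directly that $\rJ(R)$ contains no nonzero element, by exploiting the idempotents produced by regularity. First I would take an arbitrary $x\in\rJ(R)$ and use regularity to choose $y\in R$ with $xyx=x$. Then the element $e:=xy$ is idempotent, since $e^2=xyxy=(xyx)y=xy=e$, and it lies in~$\rJ(R)$ because~$x$ does and~$\rJ(R)$, being a two-sided (in particular, right) ideal, absorbs multiplication on the right by~$y$.

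Next I would invoke the quasi-regularity characterization of the radical recalled just above the statement: since $e\in\rJ(R)$, there exists $s\in R$ with $e+s-se=0$. The crux is then a one-line computation. Right-multiplying this identity by~$e$ and using $e^2=e$ gives $0=(e+s-se)e=e^2+se-se^2=e+se-se=e$, whence $e=0$. Since $ex=(xy)x=xyx=x$, it follows that $x=ex=0$, and therefore $\rJ(R)=\set{0}$, i.e., $R$ is semiprimitive.

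The underlying idea is the standard observation that the Jacobson radical contains no nonzero idempotent, combined with the fact that regularity forces every element~$x$ to be recovered as~$x=ex$ from an idempotent $e=xy$ lying in the same ideal as~$x$. There is essentially no obstacle of substance; the only point requiring a little care is that we are in the non-unital setting, so one must argue entirely through the circle condition $e+s-se=0$ rather than through invertibility of $1-e$. Since that condition is exactly what the characterization of~$\rJ(R)$ supplies, the argument goes through verbatim without any appeal to a unit.
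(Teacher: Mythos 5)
Your proof is correct. There is, however, no in-paper argument to compare it against: the paper records this proposition as contained in Jacobson~\cite[Theorem~8]{Jaco45} and gives no proof, so your self-contained argument is a genuine addition rather than a variant of the paper's route. Your argument is the classical one and every step checks out: from $x\in\rJ(R)$ and $xyx=x$ you get the idempotent $e:=xy$, which lies in $\rJ(R)$ because the radical is a two-sided ideal; the quasi-regularity characterization quoted just above the proposition (for each $x$ in the radical there is $s$ with $x+s-sx=0$) applies to $e$ in exactly the orientation stated, and right-multiplying $e+s-se=0$ by $e$ collapses, via $e^2=e$, to $e=0$; finally $x=ex=0$, so $\rJ(R)=\set{0}$. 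Your point about the non-unital setting is the only place where care is genuinely needed, and you handle it correctly: the argument never invokes a unit or invertibility of $1-e$, only the circle condition, which is precisely what the paper's characterization of $\rJ(R)$ supplies. What the citation buys the paper is brevity; what your argument buys is that the result is seen to be an elementary two-line consequence of regularity plus the stated characterization of the radical, with no external input.
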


The following is contained in Dixmier~\cite[Th\'eor\`eme~2.9.7]{Dixm}.

\begin{proposition}\label{P:C*Semipr}
Every C*-algebra is semiprimitive.
\end{proposition}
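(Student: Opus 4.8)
The plan is to show directly that the Jacobson radical $\rJ(A)$ of a C*-algebra $A$ is reduced to $\set{0}$, by exploiting the interplay between the purely ring-theoretic quasiregularity condition defining $\rJ(A)$ and the C*-identity. Passing if necessary to the unitization $\widetilde{A}$ (which is again a C*-algebra, into which $A$ embeds isometrically as a two-sided ideal), I may compute spectra and spectral radii in a unital C*-algebra. First I would record two standard facts: for a self-adjoint element $h$ of a C*-algebra, the spectral radius $\rho(h)$ equals the norm $\norm{h}$ (this follows from the C*-identity $\norm{h^2}=\norm{h}^2$ and the spectral radius formula $\rho(h)=\lim_n\norm{h^n}^{1/n}$); and $\rJ(A)$, being the ring-theoretic radical of a $\CC$-algebra, is a $\CC$-subspace of $A$ closed under multiplication on either side.

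The core of the argument is the claim that every element $y\in\rJ(A)$ is \emph{quasinilpotent}, that is, $\rho(y)=0$. Indeed, for each scalar $\lambda\in\CC$ the element $\lambda y$ again lies in $\rJ(A)$, hence is quasiregular, which by the defining condition $(\exists s)(\lambda y+s-s\lambda y=0)$ means exactly that $1-\lambda y$ is left invertible in $\widetilde{A}$ (one computes $(1-s)(1-\lambda y)=1-(\lambda y+s-s\lambda y)=1$); since $\rJ(A)$ is two-sided, the symmetric quasiregularity condition makes $1-\lambda y$ two-sided invertible. Writing $\mu-y=\mu(1-\mu^{-1}y)$ for $\mu\neq 0$, this forces the spectrum of $y$ in $\widetilde{A}$ to be contained in $\set{0}$, so $\rho(y)=0$. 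Now take any $x\in\rJ(A)$ and set $y:=x^*x$. Since $\rJ(A)$ absorbs left multiplication, $y=x^*\cdot x\in\rJ(A)$, and $y$ is self-adjoint, so $\norm{y}=\rho(y)=0$ by the claim and the norm--spectral-radius identity. The C*-identity then yields $\norm{x}^2=\norm{x^*x}=\norm{y}=0$, whence $x=0$. Thus $\rJ(A)=\set{0}$ and $A$ is semiprimitive.

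The step requiring the most care is matching the ring-theoretic definition of $\rJ(A)$ used in this paper with the analytic notions of spectrum and invertibility, especially in the non-unital case: one must check that quasiregularity of an element of $A$ is literally invertibility of $1-\lambda y$ in $\widetilde{A}$, and that the norm and spectrum of an element of $A$ are the same whether computed in $A$ or in $\widetilde{A}$. Once these identifications are in place the argument is short. An alternative route, closer to Dixmier's treatment, would invoke the Gelfand--Naimark theorem: $\rJ(A)$ is the intersection of the primitive ideals (kernels of irreducible representations) of $A$, and a C*-algebra admits a separating family of irreducible representations, so this intersection is $\set{0}$; I prefer the self-contained computation above, which uses only the C*-identity and elementary spectral theory.
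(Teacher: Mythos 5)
Your proof is correct, but note that the paper does not actually prove this statement: it simply cites Dixmier's Th\'eor\`eme~2.9.7, whose underlying argument is representation-theoretic (a C*-algebra admits a separating family of irreducible representations, so the intersection of its primitive ideals --- hence the Jacobson radical --- vanishes). That is precisely the ``alternative route'' you mention and set aside, so your argument is genuinely different and more self-contained: radical elements are quasinilpotent, and a self-adjoint quasinilpotent element is zero because $\norm{h}=\rho(h)$ for self-adjoint~$h$; applying this to $y=x^*x\in\rJ(A)$ kills the radical. What you give up relative to Dixmier is the structural by-product (the primitive-ideal picture); what you gain is a proof using only the C*-identity and elementary spectral theory, with no GNS machinery. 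The one step you should pin down more firmly is the ``standard fact'' that the \emph{ring-theoretic} radical of a possibly non-unital $\CC$-algebra is a $\CC$-subspace, which you need in order to scale~$y$: for unital~$A$ this is trivial, since $\lambda y=(\lambda 1)y$, but in the non-unital case the clean justification is the hereditary property of the Jacobson radical, $\rJ(A)=A\cap\rJ(\widetilde{A})$ for~$A$ a two-sided ideal of the unitization~$\widetilde{A}$; since $\rJ(\widetilde{A})$ is a ring ideal of a unital $\CC$-algebra it absorbs scalars, whence $\lambda y\in A\cap\rJ(\widetilde{A})=\rJ(A)$, and $1-\lambda y$ is invertible in~$\widetilde{A}$ as you claim. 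That hereditariness is classical (it appears in the very Jacobson paper this article cites for the definition of the radical), so this is a citation to add rather than a gap, and the rest of your computation goes through verbatim.
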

%


\section{Exchange rings}\label{S:BasicExch}

\emph{Exchange rings} have been first defined in the unital case in Warfield~\cite{Warf72}, then in the general case by Ara~\cite{Ara97}. A ring~$R$ is an exchange ring if for all $x\in R$, there are an idempotent $e\in R$ and $r,s\in R$ such that $e=rx=x+s-sx$. This condition is left-right symmetric. Not every exchange ring is a two-sided ideal in a unital exchange ring, see \cite[Example~4, page~412]{Ara97}. A ring~$R$ is (von~Neumann) \emph{regular} if for all $x\in R$, there exists $y\in R$ such that $xyx=x$. Every regular ring is an exchange ring, and the converse fails (cf. Example~\ref{Ex:TrMat}).

The following result is established in the unital case in Ara, Goodearl, O'Meara, and Pardo~\cite[Theorem~7.2]{AGOP}, and then extended to the non-unital case in Ara~\cite[Theorem~3.8]{Ara97}.

\begin{proposition}\label{P:RR0Exch}
A C*-algebra has real rank~$0$ if{f} it is an exchange ring.
\end{proposition}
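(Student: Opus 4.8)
The plan is to establish the two implications separately, treating the unital case first and reducing the general case to it at the end. I shall use two reformulations. On the algebraic side I use Nicholson's form of the exchange property: a unital ring~$R$ is an exchange ring if{f} for every $x\in R$ there is an idempotent~$e$ with $e\in Rx$ and $1-e\in R(1-x)$; this is exactly the definition of Section~\ref{S:BasicExch}, since $e=rx$ gives $e\in Rx$ while $e=x+s-sx$ gives $1-e=(1-s)(1-x)\in R(1-x)$, and by the left--right symmetry recorded there one may equally demand $e\in xR$ and $1-e\in(1-x)R$. On the analytic side I use the standard characterizations that a C*-algebra has real rank~$0$ if{f} the self-adjoint elements of finite spectrum are norm-dense in its self-adjoint part, if{f} (Brown--Pedersen) every hereditary C*-subalgebra has an approximate unit consisting of projections.

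For the implication that real rank~$0$ forces the exchange property, fix $a\in A$. The positive element $g:=a^*a+(1-a)^*(1-a)$ is invertible: for a unit vector~$\xi$ one has $\norm{a\xi}+\norm{(1-a)\xi}\geq\norm{\xi}$, whence $\langle g\xi,\xi\rangle=\norm{a\xi}^2+\norm{(1-a)\xi}^2\geq\frac{1}{2}$, so $g\geq\frac{1}{2}$. Put $u:=ag^{-1/2}$ and $v:=(1-a)g^{-1/2}$; right multiplication by the invertible~$g^{-1/2}$ gives $uA=aA$ and $vA=(1-a)A$, while $u^*u+v^*v=g^{-1/2}gg^{-1/2}=1$. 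Thus $c:=u^*u$ is a positive contraction with $1-c=v^*v$, and the exchange condition for~$a$ reduces to producing an idempotent $e\in uA$ with $1-e\in vA$. Were $1/2\notin\sigma(c)$, the associated spectral projection would be an honest element of~$A$ out of which such an idempotent can be manufactured; real rank~$0$ is used precisely to perturb~$c$ into a nearby contraction possessing such a spectral gap, while Lemma~\ref{L:IbarC*} (with its right-handed analogue) and Kaplansky's theorems recalled in Section~\ref{S:rV(R)} serve to pass between a one-sided ideal and its closure, and between idempotents and projections.

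For the converse, that the exchange property forces real rank~$0$, I verify that finite-spectrum self-adjoints are dense. Given $b=b^*$ and $\eps>0$, it suffices to approximate, for each~$t$ interior to $\sigma(b)$, the spectral projection $\chi_{[t,\infty)}(b)$ by a genuine projection of~$A$. Choose a continuous~$f$ vanishing below~$t$ and equal to~$1$ above $t+\delta$; then $c:=f(b)$ is almost a projection, with $\sigma(c)\subseteq[0,\delta']\cup\set{1}$. The exchange property applied to~$c$ yields an idempotent $e\in cA$ carrying the spectral mass near~$1$; by Kaplansky's results~$e$ generates the same right ideal as, and is equivalent to, a projection~$p$, and the spectral gap forces the idempotent to be a small perturbation of the spectral projection, so that~$p$ approximates $\chi_{[t,\infty)}(b)$. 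Running this over a partition of $\sigma(b)$ into pieces of length~$<\eps$ yields a finite-spectrum self-adjoint within~$\eps$ of~$b$; the one delicate point is the mutual compatibility of the finitely many projections, arranged by choosing them successively inside nested hereditary subalgebras.

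The main obstacle is the first direction. The exchange condition is an \emph{exact} algebraic factorization, exhibiting a genuine idempotent~$e$ simultaneously in $aA$ and with $1-e\in(1-a)A$, whereas real rank~$0$ furnishes only \emph{approximate}, analytic data; the invertibility of~$g$ and the passage to the single contraction~$c$ reduce the problem to a spectral-gap statement, but closing that gap while keeping both memberships exact and exactly complementary is where the perturbation estimates and the idempotent--projection dictionary must be combined with care. Finally, the reduction from the unital case treated above to the non-unital case is Ara's: both real rank~$0$ and the exchange property are local, being detected on the corners $eAe$, so the general equivalence follows from the unital one.
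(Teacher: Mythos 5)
First, a point of reference: the paper does not prove this proposition at all --- it quotes it from the literature, attributing the unital case to Ara, Goodearl, O'Meara, and Pardo \cite[Theorem~7.2]{AGOP} and the non-unital case to Ara \cite[Theorem~3.8]{Ara97}. Measured against those proofs, your text is an outline whose sound content is the elementary reductions: indeed $g:=a^*a+(1-a)^*(1-a)\geq\frac{1}{2}$, the elements $u:=ag^{-1/2}$, $v:=(1-a)g^{-1/2}$ satisfy $u^*u+v^*v=1$ with $uA=aA$ and $vA=(1-a)A$, and Nicholson's reformulation is as you state. But in both directions the step that carries all the weight is exactly the one you defer. For ``real rank~$0$ implies exchange'': a spectral projection built from $c=u^*u$ (or from a finite-spectrum perturbation of it) lies in $\overline{cA}\subseteq\overline{u^*A}$, which is attached to the \emph{wrong} right ideal; transferring it into $uA=aA$ requires an equivalence trick such as $q:=ww^*$ with $w:=up\,(pcp)^{-1/2}$, which you never perform, and Lemma~\ref{L:IbarC*} only produces an \emph{equivalent} projection inside an ideal, which destroys complementarity. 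More seriously, even granted one projection attached to $aA$ and another attached to $(1-a)A$, nothing makes them complementary: producing a \emph{single} idempotent $e$ with both exact memberships $e\in aA$ and $1-e\in(1-a)A$ is the actual content of \cite[Theorem~7.2]{AGOP}, and ``must be combined with care'' is where the proof ought to begin, not end. In the converse direction, the assertion that ``the spectral gap forces the idempotent to be a small perturbation of the spectral projection'' is unjustified: the exchange equation $e=rx=x+s-sx$ carries no norm control on $r$ and $s$. What the memberships $e\in f(b)A$ and $1-e\in(1-f(b))A$ actually give is positional information --- $e$ annihilates the lower spectral subspace of $b$ and fixes the upper one --- from which a projection squeezed between $\chi_{[t+\delta,\infty)}(b)$ and $\chi_{(t,\infty)}(b)$ must still be extracted (this involves the range projections of both $e$ and $e^*$), and the orthogonalization of the finitely many resulting projections is again only asserted.

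Second, your reduction of the non-unital case to the unital one is not merely sketchy but false. Neither property is ``detected on the corners $eAe$'': the C*-algebra $C_0((0,1])$ has no nonzero idempotent, so all of its corners vanish, yet it has real rank~$1$ and is not an exchange ring (the only candidate idempotent is $e=0$, and $0=x+s-sx$ is impossible for $x$ the identity function, as evaluation at the point~$1$ shows). Ara's actual reduction goes through the unitization $\widetilde{A}$: real rank~$0$ for non-unital algebras is defined via $\widetilde{A}$, and $A$ is an exchange ring if{f} $\widetilde{A}$ is, by the ideal-extension characterization --- the paper's own Proposition~\ref{P:V(R/I)} --- applied to the ideal $A$ of $\widetilde{A}$, the lifting of idempotents modulo~$A$ being trivial because $\widetilde{A}/A\cong\CC$.
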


The following is contained in Proposition~1.3 and Theorem~1.4 in Ara~\cite{Ara97}.

\begin{lemma}\label{L:eReErch}
The following statements hold, for any exchange ring~$R$.
\begin{enumerate}
\item $eRe$ is an exchange ring, for any idempotent~$e\in R$.

\item $\Mat_n(R)$ is an exchange ring, for any positive integer~$n$.
\end{enumerate}
\end{lemma}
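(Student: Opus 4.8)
The plan is to treat the two statements separately, proving~(i) by a direct corner computation and reducing~(ii) to the unital case through the module-theoretic meaning of the exchange property.

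For~(i), let $e\in R$ be an idempotent and fix $x\in eRe$, so that $ex=xe=x$. Applying the exchange property of~$R$ to~$x$, I obtain an idempotent $g\in R$ together with $r,s\in R$ satisfying $g=rx=x+s-sx$. From $g=rx$ and $xe=x$ I first record that $ge=g$; consequently the sandwiched element $f:=ege$ coincides with $eg$, is idempotent (since $f^2=eg\cdot eg=e(ge)g=egg=f$), and visibly lies in $eRe$. The point is then to truncate~$r$ and~$s$ to the corner by setting $r':=ere$ and $t:=ese$, both in $eRe$. Using $ex=xe=x$ one checks $r'x=erx=eg=f$, and multiplying the identity $g=x+s-sx$ by~$e$ on \emph{both} sides gives $f=ege=x+ese-esx=x+t-tx$. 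Thus $f,r',t$ witness the exchange property for~$x$ inside~$eRe$. The only subtlety is to symmetrize by multiplying on both sides rather than just on the left, which is exactly what forces the correction terms back into~$eRe$; since no unit of~$R$ is ever used, this argument is valid in the non-unital case as well.

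For~(ii), my preferred route in the unital case is conceptual. By Warfield's characterization, a unital ring~$R$ is an exchange ring precisely when the module~$R_R$ has the finite exchange property, and more generally the endomorphism ring of any module with the finite exchange property is an exchange ring. Since the finite exchange property is inherited by finite direct sums, the module~$R_R^{\,n}$ has it, and therefore $\Mat_n(R)\cong\mathrm{End}(R_R^{\,n})$ is an exchange ring. Alternatively one may proceed by induction, reducing to the case $n=2$ and performing a two-step exchange on the entries of a given $2\times2$ matrix; I expect the bookkeeping of that direct computation to be heavier than the module argument, which is why I would favour the latter.

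The main obstacle is the non-unital case of~(ii), where neither a global identity nor the matrix units $E_{ij}$ are available, so the module argument does not apply verbatim. My plan is to reduce to the unital situation by localizing: given $X\in\Mat_n(R)$ with its finitely many entries, I would produce an idempotent $e\in R$ capturing all those entries, so that~$X$ lies in the corner $d\,\Mat_n(R)\,d\cong\Mat_n(eRe)$ for the diagonal idempotent $d=\mathrm{diag}(e,\dots,e)$, a \emph{unital} ring to which the previous paragraph applies; I would then transport the resulting witnesses back to $\Mat_n(R)$ by means of the corner computation of~(i). Making the passage to such a corner legitimate for exchange rings is exactly the delicate non-unital point handled in Ara~\cite{Ara97}, and it is there that I expect the real work to lie.
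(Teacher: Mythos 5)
Your proof of (i) is correct and complete: from $g=rx=x+s-sx$ you get $ge=g$, hence $f:=ege=eg$ is an idempotent of $eRe$, and with $r':=ere$, $t:=ese$ the identities $f=r'x=x+t-tx$ follow from $ex=xe=x$; no unit is used anywhere, so this settles (i) in the paper's non-unital generality. Your unital argument for (ii) (Warfield: $R$ is exchange if{f} $R_R$ has the finite exchange property, the finite exchange property passes to finite direct sums, and $\Mat_n(R)\cong\mathrm{End}(R_R^{\,n})$) is also correct. For comparison, note that the paper proves nothing here at all: it simply cites Proposition~1.3 and Theorem~1.4 of Ara~\cite{Ara97}.

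The genuine gap is your reduction plan for the non-unital case of (ii), and it is not a fixable technicality: that plan cannot work. You propose, given $X\in\Mat_n(R)$, to find an idempotent $e\in R$ such that all entries of $X$ lie in $eRe$. But a non-unital exchange ring need not contain \emph{any} nonzero idempotent. Indeed, every radical ring --- a ring in which each element $x$ has a quasi-inverse $s$ with $x+s-sx=0$ --- is an exchange ring in the sense of the paper's definition: take $e=0$ and $r=0$ in $e=rx=x+s-sx$. A concrete example is $R=t\QQ[[t]]$: for each $x\in R$ the element $1-x$ is invertible in $\QQ[[t]]$, so $s=-x(1-x)^{-1}\in R$ is a quasi-inverse, while the only idempotent of $R$ is $0$ (idempotents of the domain $\QQ[[t]]$ are $0$ and $1$, and $1\notin R$). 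Hence for $x\neq0$ there is no idempotent $e$ with $x\in eRe$, the entries of a matrix can never be trapped in a corner, and ``localizing at an idempotent'' reduces nothing. This is precisely why Ara's non-unital theory is not built on corners: his arguments place $I$ as an ideal of an ambient unital ring (e.g.\ the unitization) and characterize exchange ideals there by a relative, Nicholson-type condition (roughly: for each $x\in I$ there is an idempotent $e\in I\cap xR$ with $1-e\in(1-x)R$), and it is that condition, not a corner argument, which is propagated to matrix rings. So (i) and the unital half of (ii) are proved, but the non-unital half of (ii) --- which the statement does require, since the paper's rings are not assumed unital --- remains unproven, and your proposed route to it is blocked.
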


It is well-known that Lemma~\ref{L:eReErch} extends to C*-algebras of real rank~$0$ (cf. Corollary~2.8 and Theorem~2.10 in Brown and Pedersen~\cite{BrPe}).

The following result is proved in \cite[Proposition~1.5]{Ara97}.

\begin{proposition}\label{P:V(Exch)}
The monoid~$\rV(R)$ has refinement, for every exchange ring~$R$.
\end{proposition}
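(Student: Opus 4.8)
The goal is the refinement property of $\rV(R)$: given $[a_0]+[a_1]=[b_0]+[b_1]$ in $\rV(R)$, I must produce classes $\gamma_{j,k}$ (for $j,k\in\set{0,1}$) with $[a_j]=\gamma_{j,0}+\gamma_{j,1}$ and $[b_k]=\gamma_{0,k}+\gamma_{1,k}$. My first move is a reduction, internal to the excerpt, to a single idempotent carrying two orthogonal binary decompositions. Choose orthogonal idempotents $u_0,u_1\in\Idp_\infty(R)$ with $[u_i]=[a_i]$ (possible by the very definition~\eqref{Eq:DefAdd[]} of the addition), and set $c:=u_0+u_1$, so that $[c]=[a_0]+[a_1]=[b_0]+[b_1]$. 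Applying Lemma~\ref{L:VmeasDec} to $[c]=[b_0]+[b_1]$ yields orthogonal idempotents $v_0,v_1$ with $c=v_0+v_1$ and $[v_k]=[b_k]$. After renaming $u_i$ as $a_i$ and $v_k$ as $b_k$, it suffices to refine a single idempotent $c=a_0+a_1=b_0+b_1$ presented as two orthogonal sums.

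Now I translate to modules. To each $e\in\Idp_n(R)$ (with $n\in\NN$) is associated a finitely generated projective module $P_e$, a direct summand of $R^n$, with $[e]=[e']$ if{f} $P_e\cong P_{e'}$ and with orthogonal sums of idempotents corresponding to internal direct sums of modules. After the reduction $c\in\Idp_n(R)$ for some $n$, so the two orthogonal presentations of $c$ become two internal decompositions $P_c=P_{a_0}\oplus P_{a_1}=P_{b_0}\oplus P_{b_1}$, and the task is to refine these two binary decompositions of the single module $P_c$ up to isomorphism. This is exactly where the exchange hypothesis enters: I would invoke the characterization (Warfield~\cite{Warf72} in the unital case, Ara~\cite{Ara97} in general) that $R$ is an exchange ring if{f} the module $R_R$, hence every finitely generated projective module, has the finite exchange property. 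Applying the finite exchange property of $P_{a_0}$ to the decomposition $P_c=P_{b_0}\oplus P_{b_1}$ produces submodules $B_0\subseteq P_{b_0}$ and $B_1\subseteq P_{b_1}$ with
\[
P_c=P_{a_0}\oplus B_0\oplus B_1\,.
\]

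From here the construction of the four pieces is bookkeeping. Each $B_k$, being a direct summand of $P_c$ contained in the summand $P_{b_k}$, is a direct summand of $P_{b_k}$ by the modular law; write $P_{b_k}=B_k\oplus D_k$. I propose the four pieces $\gamma_{0,0}=[D_0]$, $\gamma_{0,1}=[D_1]$, $\gamma_{1,0}=[B_0]$, $\gamma_{1,1}=[B_1]$, each a well-defined element of $\rV(R)$ as the class of a summand of $P_c$. The column sums are immediate, since $D_k\oplus B_k=P_{b_k}$. For the row sums I would use the elementary principle that two complements of a fixed direct summand of a module are isomorphic: both $P_{a_0}$ and $D_0\oplus D_1$ are complements of $B_0\oplus B_1$ in $P_c$ (the latter by regrouping $P_c=(D_0\oplus B_0)\oplus(D_1\oplus B_1)$), whence $P_{a_0}\cong D_0\oplus D_1$; and both $P_{a_1}$ and $B_0\oplus B_1$ are complements of $P_{a_0}$ in $P_c$, whence $P_{a_1}\cong B_0\oplus B_1$. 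Passing back to classes gives $[a_0]=\gamma_{0,0}+\gamma_{0,1}$, $[a_1]=\gamma_{1,0}+\gamma_{1,1}$, and $[b_k]=\gamma_{0,k}+\gamma_{1,k}$, as required.

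The main obstacle is precisely the single application of the finite exchange property yielding the displayed splitting $P_c=P_{a_0}\oplus B_0\oplus B_1$: this is the only place the exchange axiom is genuinely used, while everything else (the reduction via Lemma~\ref{L:VmeasDec}, the modular-law splitting of $P_{b_k}$, and the two-complements isomorphisms) is formal module theory valid over an arbitrary ring. Thus the crux is the passage from the ring-theoretic exchange identity $e=rx=x+s-sx$ to the module-theoretic finite exchange property; in a fully self-contained treatment one would either cite the Warfield--Ara characterization or reprove that equivalence by the standard idempotent manipulations.
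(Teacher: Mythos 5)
Your argument is correct, and is essentially the classical one, for \emph{unital} exchange rings; but as written it does not prove the proposition in the stated generality, because the paper's rings are not assumed unital and exchange is taken in Ara's element-wise sense ($e=rx=x+s-sx$). Both pillars of your proof are unital facts: the dictionary between $\rV(R)$ and isomorphism classes of finitely generated projective modules is available (and is stated in the paper) only for unital~$R$, and Warfield's characterization of exchange rings via the finite exchange property of $R_R$ is a theorem about unital rings. Your parenthetical ``Ara~\cite{Ara97} in general'' misattributes what that paper does: in the non-unital case Ara \emph{defines} exchange by the element-wise identity precisely because the module-theoretic formulation is not available there, and no equivalence with a finite exchange property of modules is established for non-unital rings. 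So the step where you pass from the reduction $c=a_0+a_1=b_0+b_1$ to internal decompositions of a projective module $P_c$, and then invoke the exchange property of $P_{a_0}$, genuinely fails to be justified when~$R$ has no unit. (For comparison, the paper gives no argument at all: it simply cites Ara \cite[Proposition~1.5]{Ara97}, which covers the general case.)

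The gap is easy to close with material already in the paper. Your opening reduction is fine, and after it all four idempotents $a_0,a_1,b_0,b_1$ lie in the corner $S:=c\,\Mat_n(R)\,c$ (since each is $\leq c$), which is a \emph{unital} ring with unit $c$ and is an exchange ring by Lemma~\ref{L:eReErch} (first pass to $\Mat_n(R)$, then to the corner). Running your module argument inside~$S$, with $P_c=S_S$, yields a refinement of $[a_0]_S+[a_1]_S=[b_0]_S+[b_1]_S$ in $\rV(S)$. Finally, the inclusion $S\into\Mat_n(R)$ is a (non-unital) ring homomorphism, so functoriality of~$\rV$ together with the isomorphism $\rV(\Mat_n(R))\cong\rV(R)$ gives a monoid homomorphism $\rV(S)\to\rV(R)$ sending $[a_i]_S\mapsto[a_i]_R$ and $[b_k]_S\mapsto[b_k]_R$; it carries your refinement matrix to the required one in $\rV(R)$. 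With this corner reduction inserted, your proof is complete and is, in substance, the standard proof of Ara's Proposition~1.5.
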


The first part of the following result is proved in Ara~\cite[Theorem~2.2]{Ara97}. The second part is proved, in the unital case, in Ara, Goodearl, O'Meara, and Pardo~\cite[Proposition~1.4]{AGOP}. The proof can be trivially extended to the non-unital case, by using Lemma~\ref{L:eReErch}.

\begin{proposition}\label{P:V(R/I)}
Let~$I$ be a two-sided ideal in a ring~$R$. Then~$R$ is an exchange ring if{f} $I$ and $R/I$ are both exchange rings and idempotents can be lifted modulo~$I$. Furthermore, if~$R$ is an exchange ring, then the canonical homomorphism $\rV(R)\to\rV(R/I)$ is surjective, and it induces an isomorphism $\rV(R)/\rV(I)\to\rV(R/I)$.
\end{proposition}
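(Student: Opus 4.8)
The statement has two parts, and the plan is to treat them in the order in which the second depends on the first. For the first part, the forward implication has three ingredients. That $R/I$ is an exchange ring is immediate, since the defining identities $e=rx=x+s-sx$ are preserved by the quotient map $R\onto R/I$. That idempotents lift modulo~$I$ follows from a standard computation: given $x\in R$ with $\bar x$ idempotent in $R/I$, apply the exchange property to~$x$ to obtain an idempotent $e\in Rx$ together with $r,s$ such that $e=rx=x+s-sx$, and then the exchange identities force $\bar e=\bar x$ (for instance $(1-\bar e)\bar x=0$ and $\bar e(1-\bar x)=0$), so that~$e$ is the desired lift. The genuinely nontrivial point is that the two-sided ideal~$I$ is itself an exchange ring; this is Ara~\cite[Theorem~2.2]{Ara97}, the delicate part being that the exchange witnesses for $x\in I$ can be chosen inside~$I$, and I would cite it. For the converse, given $x\in R$, I would first use that $R/I$ is an exchange ring to produce an idempotent and exchange data over $R/I$, lift that idempotent to an idempotent $e\in R$ by idempotent lifting, and then absorb the remaining discrepancy, which lies in~$I$, by invoking the exchange property of~$I$. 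This is the classical patching argument, and the bookkeeping is routine once the three hypotheses are available.

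For the second part, assume $R$ is an exchange ring. Surjectivity of the canonical map $\rV(R)\to\rV(R/I)$ I would obtain by lifting idempotent \emph{matrices}: given $\bar a\in\Idp_n(R/I)$, note that $\Mat_n(R)$ is an exchange ring by Lemma~\ref{L:eReErch}(ii) and that $\Mat_n(R)/\Mat_n(I)\cong\Mat_n(R/I)$, so by the first part (applied to $\Mat_n(R)$) idempotents lift modulo $\Mat_n(I)$; lifting $\bar a$ to some $a\in\Idp_n(R)\subseteq\Idp_\infty(R)$ then gives $[a]\mapsto[\bar a]$. Since every element of~$\rV(I)$ maps to~$0$ in $\rV(R/I)$, this surjection factors through a surjective monoid homomorphism $\rV(R)/\rV(I)\onto\rV(R/I)$, and it remains only to prove injectivity.

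Injectivity is the main obstacle. Concretely, I must show that if $a,b\in\Idp_\infty(R)$ satisfy $\bar a\sim\bar b$ in $\Mat_\infty(R/I)$, then $[a]\equiv_{\rV(I)}[b]$, i.e.\ there are $\xi,\eta\in\rV(I)$ with $[a]+\xi=[b]+\eta$. The strategy is to lift the Murray--von Neumann equivalence $\bar a\sim\bar b$ to an orthogonal comparison over~$R$. Putting the equivalence in normalized form $\bar u=\bar a\bar u\bar b$, $\bar v=\bar b\bar v\bar a$ with $\overline{uv}=\bar a$ and $\overline{vu}=\bar b$, I would lift $u,v$ to $\Mat_n(R)$ with $u=aub$ and $v=bva$, so that $uv$ lies in the corner ring $a\Mat_n(R)a$ and reduces to the unit~$\bar a$ of the corresponding corner of $\Mat_n(R/I)$. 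Working inside the exchange corner rings $a\Mat_n(R)a$ and $b\Mat_n(R)b$ (these are exchange by Lemma~\ref{L:eReErch}(i)) and using idempotent lifting, I would extract a common equivalence $a_1\sim b_1$ over~$R$ together with orthogonal complements $\xi_0,\eta_0$ that are idempotents whose entries lie in~$I$, yielding $a\sim a_1+\xi_0$ and $b\sim b_1+\eta_0$. By Lemma~\ref{L:GenV(I)} the classes $[\xi_0],[\eta_0]$ belong to~$\rV(I)$, and since $[a_1]=[b_1]$ we obtain $[a]+[\eta_0]=[b]+[\xi_0]$, that is, $[a]\equiv_{\rV(I)}[b]$. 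This equivalence-refinement lifting is the heart of the matter; the reduction from the unital case of Ara, Goodearl, O'Meara, and Pardo~\cite[Proposition~1.4]{AGOP} to the non-unital case is exactly this passage to matrix corner rings, justified throughout by Lemma~\ref{L:eReErch}.
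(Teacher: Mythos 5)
Your proposal is correct and takes essentially the same route as the paper, whose ``proof'' consists precisely of citing Ara~\cite[Theorem~2.2]{Ara97} for the first part and Ara, Goodearl, O'Meara, and Pardo~\cite[Proposition~1.4]{AGOP} for the second, with the remark that the non-unital case follows trivially via Lemma~\ref{L:eReErch} --- exactly the reduction through matrix rings and corner exchange rings that you spell out. The additional details you supply (exchange property of the quotient, idempotent lifting, surjectivity via $\Idp_n$-lifting, and the $\equiv_{\rV(I)}$ bookkeeping) are correct elaborations of what the paper delegates to those references.
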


In the statement of Proposition~\ref{P:V(R/I)}, $\rV(I)$ is identified with its image in~$\rV(R)$, which is an o-ideal of~$\rV(R)$. The quotient $\rV(R)/\rV(I)$ is defined in~\eqref{Eq:M/Iequiv} and the comment following.





{}From now on we shall denote by $\Reg$ (resp., $\Exch$) the category of all regular rings (resp., exchange rings) with ring homomorphisms.

\section{Weakly V-semiprimitive rings; an unliftable diagram}\label{S:FirstDiagr}

\begin{definition}\label{D:perp}
Elements~$\alpha$ and~$\beta$ in a conical \cm~$M$ are \emph{orthogonal}, in notation $\alpha\perp\beta$, if the following statement holds:
 \[
 (\forall n\in\NN)(\forall\gamma\in M)\bigl(
 (\gamma\leq n\alpha\text{ and }\gamma\leq n\beta)
 \Rightarrow\gamma=0\bigr)\,.
 \]
Equivalently, the o-ideals generated by~$\alpha$ and~$\beta$, respectively, meet in~$\set{0}$.
\end{definition}

\begin{lemma}\label{L:V(A)capV(B)}
Let~$R$ be an exchange ring, let $a,b\in\Idp_\infty(R)$ with $[a]_R\perp[b]_R$, and denote by~$A$ and~$B$ the two-sided ideals of~$R$ generated by the entries of~$a$ and~$b$, respectively. Then $A\cap B$ is contained in~$\rJ(R)$.
\end{lemma}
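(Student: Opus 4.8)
The plan is to reduce the statement to the elementary fact that a two-sided ideal of an exchange ring containing no nonzero idempotent must lie inside the Jacobson radical, and to extract the idempotent-freeness of $A\cap B$ from the orthogonality hypothesis by way of Lemma~\ref{L:GenV(I)}. First I would translate $[a]_R\perp[b]_R$ into monoid-theoretic terms. Applying Lemma~\ref{L:GenV(I)} to the singletons $E=\set{a}$ and $E=\set{b}$ shows that the o-ideals $\rV(A)$ and $\rV(B)$ of $\rV(R)$ are exactly the o-ideals generated by $[a]_R$ and $[b]_R$, respectively. By the equivalent formulation of orthogonality given in Definition~\ref{D:perp}, the hypothesis $[a]_R\perp[b]_R$ states precisely that these two o-ideals meet in $\set{0}$; that is, $\rV(A)\cap\rV(B)=\set{0}$ inside $\rV(R)$ (here $\rV(A)$ and $\rV(B)$ are identified with their images in $\rV(R)$).

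Next I would show that the two-sided ideal $A\cap B$ contains no nonzero idempotent. If $e$ is an idempotent of $A\cap B$, then $e\in\Idp_\infty(A)$ and $e\in\Idp_\infty(B)$, so its class $[e]_R$ lies in both $\rV(A)$ and $\rV(B)$, whence $[e]_R=0$ by the previous paragraph. But $[e]_R=0$ means $e\sim 0$, so there are $p,q\in\Mat_\infty(R)$ with $e=pq$ and $qp=0$; then $e^2=p(qp)q=0$, and since $e$ is idempotent, $e=0$.

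Finally I would invoke the exchange property. Let $w\in A\cap B$. There are an idempotent $e\in R$ and $r,s\in R$ with $e=rw=w+s-sw$. Since $A\cap B$ is a two-sided ideal and $e=rw$, the idempotent $e$ belongs to $A\cap B$, so $e=0$ by the previous step, and therefore $w+s-sw=0$. Thus $A\cap B$ is a two-sided ideal, every element $w$ of which admits some $s\in R$ with $w+s-sw=0$. The characterization of $\rJ(R)$ as the largest two-sided ideal with this quasi-regularity property, recalled at the start of the discussion preceding Proposition~\ref{P:RegSemipr}, then yields $A\cap B\subseteq\rJ(R)$.

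The individual verifications are all routine; the one point deserving attention is the double role played by the exchange datum: the idempotent $e=rw$ is automatically forced into the ideal $A\cap B$ (so that orthogonality can annihilate it), while the leftover relation $w+s-sw=0$ is exactly the quasi-regularity demanded by the radical. I expect no serious obstacle beyond the mild bookkeeping of identifying $\rV(A)$, $\rV(B)$, and the classes $[e]_R$ as living inside the single monoid $\rV(R)$.
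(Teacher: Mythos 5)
Your proof is correct, and its skeleton is the same as the paper's: Lemma~\ref{L:GenV(I)} turns the hypothesis $[a]_R\perp[b]_R$ into the statement that $A\cap B$ carries no nonzero idempotent, and the quasi-regularity characterization of $\rJ(R)$ finishes the argument. The one genuine difference is the final step. The paper invokes the fact that the ideal $I:=A\cap B$ of the exchange ring~$R$ is itself an exchange ring (a non-unital result of Ara, contained in Proposition~\ref{P:V(R/I)}) and then applies the exchange equation \emph{inside}~$I$, where the produced idempotent must vanish. You instead apply the exchange equation \emph{in~$R$} to an element $w\in A\cap B$ and observe that the resulting idempotent $e=rw$ is automatically pushed into the two-sided ideal $A\cap B$, where the orthogonality argument kills it; the leftover relation $w+s-sw=0$ is then the required quasi-regularity. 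Your variant is slightly more self-contained, since it uses only the definition of an exchange ring and not the closure of that class under passing to ideals; what the paper's phrasing buys is brevity, as Proposition~\ref{P:V(R/I)} is already part of its toolkit. Once the citation is unfolded, the two arguments coincide, because the proof that ideals of exchange rings are exchange rings rests on exactly the observation you make: the exchange idempotent attached to an element of an ideal can be taken inside that ideal.
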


\begin{proof}
Set $I:=A\cap B$. It follows from Lemma~\ref{L:GenV(I)} that for each $e\in\Idp_\infty(I)$, there exists $n\in\NN$ such that $[e]\leq n[a]$ and $[e]\leq n[b]$. As $[a]\perp[b]$, it follows that $e=0$. Hence $\rV(I)=\set{0}$. As~$I$ is an exchange ring without nonzero idempotents, it must be contained in~$\rJ(R)$.
\end{proof}

\begin{definition}\label{D:wVsem}
A ring~$R$ is \emph{weakly V-semiprimitive} if for all $a,b\in\Idp_\infty(R)$, the relation $[a]_R\perp\nobreak[b]_R$ implies that $ab\in\rJ(\Mat_\infty(R))$.
\end{definition}

As an immediate consequence of Lemma~\ref{L:V(A)capV(B)}, we record the following.

\begin{proposition}\label{P:Exch2wVsem}
Every exchange ring is weakly V-semiprimitive.
\end{proposition}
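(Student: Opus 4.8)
The plan is to deduce the Proposition immediately from Lemma~\ref{L:V(A)capV(B)}, essentially by reading its conclusion off at the level of matrix entries. Fix $a,b\in\Idp_\infty(R)$ with $[a]_R\perp[b]_R$; by Definition~\ref{D:wVsem} we must show that $ab\in\rJ(\Mat_\infty(R))$. Let~$A$ and~$B$ be the two-sided ideals of~$R$ generated by the entries of~$a$ and~$b$, respectively. Since $[a]_R\perp[b]_R$ (Definition~\ref{D:perp}), Lemma~\ref{L:V(A)capV(B)} applies verbatim and yields $A\cap B\subseteq\rJ(R)$.

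Next I would pass from~$R$ to~$\Mat_\infty(R)$. Every entry of~$a$ lies in~$A$ and every entry of~$b$ lies in~$B$, so each entry $(ab)_{j,k}=\sum_\ell a_{j,\ell}b_{\ell,k}$ of the product~$ab$ is a finite sum of elements of~$AB$. As~$A$ is a right ideal and~$B$ is a left ideal, $AB\subseteq A\cap B$, whence $AB\subseteq\rJ(R)$ by the previous step. Therefore every entry of~$ab$ lies in~$\rJ(R)$; in other words, $ab\in\Mat_\infty(\rJ(R))$.

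It then remains to observe that $\Mat_\infty(\rJ(R))\subseteq\rJ(\Mat_\infty(R))$, after which $ab\in\rJ(\Mat_\infty(R))$ and the proof is complete. This inclusion is the only point requiring an argument beyond Lemma~\ref{L:V(A)capV(B)}, and it is exactly where I expect the (mild) obstacle to sit: one must control the radical of a \emph{non-unital} matrix ring. It follows from the quasi-regular-ideal description of~$\rJ$ recalled in Section~\ref{S:rV(R)}, since $\Mat_\infty(\rJ(R))$ is a two-sided ideal of~$\Mat_\infty(R)$ all of whose elements are quasi-regular, so maximality of the radical forces the inclusion; concretely this reduces to the standard facts $\rJ(\Mat_n(R))=\Mat_n(\rJ(R))$ and the compatibility of~$\rJ$ with the directed union $\Mat_\infty(R)=\bigcup_n\Mat_n(R)$.

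Alternatively, one can bypass the matrix-radical bookkeeping altogether by applying Lemma~\ref{L:V(A)capV(B)} to the exchange ring~$\Mat_\infty(R)$ itself, which is an exchange ring as a directed union of the exchange rings~$\Mat_n(R)$ of Lemma~\ref{L:eReErch}. Viewing~$a$ and~$b$ as $1\times1$ idempotent matrices over~$\Mat_\infty(R)$, the canonical isomorphism $\rV(\Mat_\infty(R))\cong\rV(R)$ transports the orthogonality $[a]_R\perp[b]_R$, so the lemma places the ideal generated by~$a$ intersected with the one generated by~$b$, and hence~$ab$, directly inside~$\rJ(\Mat_\infty(R))$.
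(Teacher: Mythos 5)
Your proof is correct and takes exactly the paper's route: the paper records this Proposition as an immediate consequence of Lemma~\ref{L:V(A)capV(B)}, with no further argument given. You have simply made the ``immediate'' step explicit---the entries of $ab$ lie in $AB\subseteq A\cap B\subseteq\rJ(R)$, and $\Mat_\infty(\rJ(R))\subseteq\rJ(\Mat_\infty(R))$ by the quasi-regular-ideal characterization of the radical quoted in Section~\ref{S:rV(R)}---which is precisely the deduction the paper leaves to the reader.
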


The converse of Proposition~\ref{P:Exch2wVsem} fails: not every weakly V-semiprimitive ring is an exchange ring (e.g., the ring~$\ZZ$ of all integers).

Consider the diagram~$\vK$ represented in the left hand side of Figure~\ref{Fig:OneLoop}: by definition, $\bs(x,y)=(y,x)$ and $\bh(x,y)=x+y$, for all $(x,y)\in(\ZZ^+)^2$.

\begin{figure}[htb]
 \[
\xymatrixrowsep{2pc}\xymatrixcolsep{1.5pc}
\def\labelstyle{\displaystyle}
 \xymatrix{
 (\ZZ^+)^2\lloopd{15}{7}_{\bs}\ar[rr]^{\bh} && \ZZ^+ &&&&
 R\lloopd{15}{7}_{s}\ar[rr]^h && S
 }
 \]
\caption{The diagram $\vK$ and a lifting~$\vR$ of $\vK$}
\label{Fig:OneLoop}
\end{figure}

The diagram~$\vK$ and Theorem~\ref{T:FirstNonLift} were both suggested to the author by Ken Goodearl, thus improving the author's original formulation of Corollary~\ref{C:FirstNonLift} (which was stated for functors $\lift\colon\xS(1)\to\nobreak\Exch$).

\begin{theorem}\label{T:FirstNonLift}
The diagram~$\vK$ has no lifting, with respect to the functor~$\rV$, by any diagram labeled as on the right hand side of Figure~\textup{\ref{Fig:OneLoop}} in such a way that~$R$ is weakly V-semiprimitive and $h\circ s=h$.
\end{theorem}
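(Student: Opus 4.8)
The plan is to argue by contradiction. Assume a lifting $\vR=(s\colon R\to R,\ h\colon R\to S)$ of $\vK$ exists, and fix natural equivalences $\rV(R)\cong(\ZZ^+)^2$ and $\rV(S)\cong\ZZ^+$ identifying $\rV(s)$ with $\bs$ and $\rV(h)$ with $\bh$. The goal is to exhibit a \emph{single} nonzero idempotent over $R$ whose image under $h$ is forced to vanish. First I would choose an idempotent $a\in\Idp_\infty(R)$ whose class $[a]_R$ corresponds to the generator $(1,0)$ of $(\ZZ^+)^2$; such an $a$ exists since $(1,0)$ lies in $\rV(R)$. Applying $\rV(s)=\bs$ gives $[s(a)]_R=\bs(1,0)=(0,1)$, and because the two coordinate generators of $(\ZZ^+)^2$ generate o-ideals meeting only in $\set{0}$, this yields $[a]_R\perp[s(a)]_R$ in the sense of Definition~\ref{D:perp}.

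The heart of the matter is to combine weak V-semiprimitivity with the composition relation $h\circ s=h$. By Definition~\ref{D:wVsem}, the orthogonality $[a]_R\perp[s(a)]_R$ forces $r:=a\,s(a)\in\rJ(\Mat_\infty(R))$. Writing $p:=h(a)$ (and viewing $h$ also as the induced map $\Mat_\infty(R)\to\Mat_\infty(S)$), the relation $h\circ s=h$ gives $h(s(a))=h(a)=p$, whence
\[
p=p^2=h(a)\,h(s(a))=h\bigl(a\,s(a)\bigr)=h(r).
\]
Thus $p$ is simultaneously an idempotent (being $h(a)$ with $a$ idempotent) and the image under the ring homomorphism $h$ of a radical element $r$.

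To conclude I would use that every element of $\rJ(\Mat_\infty(R))$ is quasi-regular. By the characterization of the Jacobson radical recalled just before Proposition~\ref{P:RegSemipr}, there is $t\in\Mat_\infty(R)$ with $r+t-tr=0$, so applying $h$ gives $p+h(t)-h(t)p=0$. Multiplying on the right by $p$ and using $p^2=p$ collapses this identity to $p=0$; in other words, a quasi-regular idempotent is zero. This contradicts $[p]_S=\rV(h)([a])=\bh(1,0)=1\neq 0$ in $\ZZ^+\cong\rV(S)$, so no such lifting exists.

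The main obstacle — and the precise point where the hypothesis $h\circ s=h$ is indispensable — is that $a$ and $s(a)$ are only orthogonal \emph{in class}, so their product is merely radical rather than literally zero; one cannot directly deduce $h(a)^2=0$ as one could for a genuine pair of orthogonal idempotents. The composition relation is exactly what converts the ``soft'' orthogonality $[a]_R\perp[s(a)]_R$ into the rigid identity $p=h(r)$, after which the fact that the radical contains no nonzero idempotents finishes the job. A secondary convenience worth noting is that one needs only the single idempotent $a$: the swap $s$ manufactures the required second idempotent $s(a)$ from $a$, and the nonvanishing of $p$ is guaranteed solely by $\bh(1,0)=1$.
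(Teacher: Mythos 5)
Your proposal is correct and follows essentially the same route as the paper's proof: pick $a$ with class $(1,0)$, use $\rV(s)=\bs$ to get $[a]_R\perp[s(a)]_R$, invoke weak V-semiprimitivity to place $a\,s(a)$ in $\rJ(\Mat_\infty(R))$, and then use quasi-regularity together with $h\circ s=h$ and idempotence to force $h(a)=0$, contradicting $\bh(1,0)=1$. The only differences are cosmetic (the paper also names an idempotent $b$ with class $(0,1)$ before identifying $[b]_R=[s(a)]_R$, and multiplies the quasi-regularity identity on the left rather than the right).
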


\begin{proof}
Let $\eta\colon\rV\vR\to\vK$ be a natural equivalence, with component isomorphisms $\eta_R\colon\rV(R)\to\nobreak(\ZZ^+)^2$ and $\eta_S\colon\rV(S)\to\ZZ^+$. We obtain a commutative diagram as on Figure~\ref{Fig:UnfoldingLoop}.

\begin{figure}[htb]
 \[
\xymatrixrowsep{2pc}\xymatrixcolsep{1.5pc}
\def\labelstyle{\displaystyle}
 \xymatrix{
 \rV(R)\ar[rr]^{\rV(s)}\ar[d]_{\eta_R} &&
 \rV(R)\ar[rr]^{\rV(h)}\ar[d]_{\eta_R} && \rV(S)\ar[d]_{\eta_S}\\
(\ZZ^+)^2\ar[rr]^{\bs} &&(\ZZ^+)^2\ar[rr]^{\bh} &&\ZZ^+
 }
 \]
\caption{A commutative diagram of \cm s}
\label{Fig:UnfoldingLoop}
\end{figure}

There are $a,b\in\Idp_\infty(R)$ such that $\eta_R([a]_R)=(1,0)$ and $\eta_R([b]_R)=(0,1)$. As~$\eta_R$ is an isomorphism, $[a]_R\perp[b]_R$. Chasing around the diagram of Figure~\ref{Fig:UnfoldingLoop}, we get
 \[
 \eta_R([b]_R)=(0,1)=\bs(1,0)=(\bs\circ\eta_R)([a]_R)=
 (\eta_R\circ\rV(s))([a]_R)=\eta_R([s(a)]_R)\,,
 \]
thus $[b]_R=[s(a)]_R$, and thus $[a]_R\perp[s(a)]_R$, and so, as~$R$ is weakly V-semiprimitive, $as(a)\in\rJ(\Mat_\infty(R))$. Hence there exists $x\in\Mat_\infty(R)$ such that $as(a)+x-as(a)x=0$. By applying~$h$ to each side of this equation and observing that~$a^2=a$ while $h\circ s=h$, we obtain that $h(a)+h(x)-h(a)h(x)=0$, thus, multiplying this equation on the left by~$h(a)$, we get $h(a)=0$. Therefore,
 \[
 1=\bh(1,0)=(\bh\circ\eta_R)([a]_R)=(\eta_S\circ\rV(h))([a]_R)=
 \eta_S([h(a)]_S)=0\quad\text{in }\ZZ^+\,,
 \]
a contradiction.
\end{proof}

\begin{corollary}\label{C:FirstNonLift}
Denote by~$\xS(1)$ the category of all simplicial monoids with order-unit and normalized monoid homomorphisms, and by $\wVSem$ the category of all weakly V-semiprimitive rings and ring homomorphisms.
Then there is no functor $\lift\colon\xS(1)\to\nobreak\wVSem$ such that $\rV\circ\lift\cong\id$.
\end{corollary}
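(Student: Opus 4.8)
The plan is to feed the diagram~$\vK$ through the hypothetical functor~$\lift$ and thereby contradict Theorem~\ref{T:FirstNonLift}. Assume, towards a contradiction, that a functor $\lift\colon\xS(1)\to\wVSem$ with $\rV\circ\lift\cong\id$ exists. The first thing I would do is promote~$\vK$ to an honest diagram in~$\xS(1)$ by supplying order-units: equip $(\ZZ^+)^2$ with the order-unit $(1,0)+(0,1)=(1,1)$ and equip~$\ZZ^+$ with the order-unit~$2$. The only point that requires verification at this stage is that the two structure maps of~$\vK$ are morphisms of~$\xS(1)$, that is, \emph{normalized} monoid homomorphisms. Since $\bs(x,y)=(y,x)$ fixes $(1,1)$ and $\bh(x,y)=x+y$ sends $(1,1)$ to~$2$, both~$\bs$ and~$\bh$ are normalized, so~$\vK$ genuinely lives in~$\xS(1)$ (note that this is what forces the order-unit~$2$, rather than~$1$, on~$\ZZ^+$).

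Next I would apply~$\lift$ termwise. Writing $R:=\lift\bigl((\ZZ^+)^2\bigr)$, $S:=\lift(\ZZ^+)$, $s:=\lift(\bs)$, and $h:=\lift(\bh)$, functoriality produces a diagram~$\vR$ of weakly V-semiprimitive rings of exactly the shape pictured on the right of Figure~\ref{Fig:OneLoop}; in particular~$R$ is weakly V-semiprimitive simply because it is an object of~$\wVSem$. The crucial composition relation is then inherited from the monoid side, where one computes directly that $\bh\circ\bs=\bh$; applying the functor and using that it preserves composition gives $h\circ s=\lift(\bh\circ\bs)=\lift(\bh)=h$, which is precisely the extra requirement imposed in Theorem~\ref{T:FirstNonLift}.

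It then remains to observe that~$\vR$ lifts~$\vK$ with respect to~$\rV$. The assumed natural equivalence $\rV\circ\lift\cong\id$ restricts, on the two vertices of~$\vK$, to isomorphisms $\rV(R)\cong(\ZZ^+)^2$ and $\rV(S)\cong\ZZ^+$ that are natural with respect to the images under~$\rV$ of~$s$ and~$h$; forgetting the order-units, this is exactly a natural equivalence $\rV\vR\cong\vK$ in~$\CM$, i.e., a lifting of~$\vK$ by~$\vR$ in the sense of Theorem~\ref{T:FirstNonLift}. Since~$R$ is weakly V-semiprimitive and $h\circ s=h$, this contradicts that theorem, and the corollary follows.

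I expect the only genuinely delicate step to be the bookkeeping of order-units: one must choose them so that~$\bs$ and~$\bh$ become normalized morphisms of~$\xS(1)$, and one must then confirm that a natural equivalence of $\xS(1)$-indexed diagrams downgrades, upon forgetting order-units, to the plain monoid natural equivalence demanded by Theorem~\ref{T:FirstNonLift}. Everything else—the existence of the loop~$s$, the edge~$h$, and the relation $h\circ s=h$—is a purely formal consequence of functoriality, with no further ring-theoretic input needed.
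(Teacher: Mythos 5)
Your proposal is correct and is exactly the derivation the paper intends (the paper leaves it implicit, as the corollary is stated as an immediate consequence of Theorem~\ref{T:FirstNonLift}): promote~$\vK$ to~$\xS(1)$ with order-units $(1,1)$ and~$2$, apply~$\lift$, use functoriality together with $\bh\circ\bs=\bh$ to get $h\circ s=h$, and read off the lifting $\rV\vR\cong\vK$ from the natural equivalence $\rV\circ\lift\cong\id$. Your attention to the two delicate points — choosing the order-units so that~$\bs$ and~$\bh$ are normalized, and downgrading the $\xS(1)$-natural equivalence to one in~$\CM$ — is precisely the bookkeeping the paper omits.
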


We shall see in Proposition~\ref{P:vecCLift} that while the diagram~$\vK$ has no lifting, with respect to the functor~$\rV$, by weakly V-semiprimitive rings with $h\circ s=h$, it has one by unital C*-algebras.

\section{{}From a transfinite counterexample to non-splitting of~$\rK_0$}\label{S:NoSplit}

As the following result will be superseded by Corollary~\ref{C:NoLift3}, we shall only outline its proof.

\begin{proposition}\label{P:NonSplit}
Denote by~$\xSe_{\mathrm{grp}}(1)$ the category of all simplicial groups with order-unit with normalized positive group embeddings. Then there is no functor\linebreak $\lift\colon\xSe_{\mathrm{grp}}(1)\to\AF$ such that $\rK_0\circ\lift\cong\id$.
\end{proposition}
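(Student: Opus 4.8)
The plan is to argue by contradiction, reducing the hypothetical right inverse on AF~C*-algebras to the existence of a \emph{regular} ring whose nonstable $\rK_0$-invariant is a prescribed dimension group, which is forbidden by the transfinite counterexample of~\cite{NonMeas}. So suppose that a functor $\lift\colon\xSe_{\mathrm{grp}}(1)\to\AF$ with $\rK_0\circ\lift\cong\id$ exists, and let $(G,u)$ be the dimension group with order-unit, of cardinality~$\aleph_2$, constructed in~\cite{NonMeas}, whose positive cone is not isomorphic to~$\rV(B)$ for any regular ring~$B$.

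The key observation is that~$\lift$ takes values, on objects, in finite-dimensional C*-algebras: if~$H$ is a simplicial group with order-unit, then $\lift(H)$ is an AF~C*-algebra with $\rK_0(\lift(H))\cong H$ simplicial, hence $\lift(H)$ is finite-dimensional (by the classification of AF~C*-algebras), and in particular a regular ring; and the morphisms~$\lift(f)$, for embeddings~$f$, are $*$-homomorphisms between finite-dimensional C*-algebras, that is, homomorphisms of regular rings. Consequently, \emph{if}~$(G,u)$ can be written as the colimit, in the category of pre-ordered abelian groups with order-unit, of a directed system $\bigl((G_i,u_i),f_{ij}\bigr)$ of simplicial groups whose transition maps~$f_{ij}$ are normalized positive \emph{embeddings}, then applying~$\lift$ produces a directed system of finite-dimensional C*-algebras, and its \emph{algebraic} direct limit $R:=\varinjlim_{i}\lift(G_i,u_i)$, taken in the category of rings, is a directed colimit of regular rings, hence is regular. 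Since the functor~$\rV$ preserves directed colimits, and since~$\rV$ takes the same value on a finite-dimensional C*-algebra whether it is regarded as a C*-algebra or as a bare ring, one computes $\rV(R)\cong\varinjlim_{i}G_i^+\cong G^+$ (and likewise $\rK_0(R)\cong G$). This exhibits~$G^+$ as~$\rV(R)$ for a regular ring~$R$, contradicting the choice of~$(G,u)$.

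The main obstacle, and the only step requiring genuine work, is therefore the conditional hypothesis above: realizing the prescribed dimension group~$(G,u)$ as the colimit of a directed system of simplicial groups with order-unit whose transition morphisms are \emph{one-to-one} (and normalized, positive). The representation of a dimension group as a directed colimit of simplicial groups provided by Effros, Handelman, and Shen~\cite{EHS} (or Grillet~\cite{Gril76}) only furnishes \emph{positive} transition maps, which need not be injective, and the order-unit normalization imposes further rigidity. It is precisely here that tools from lattice theory and universal algebra are brought to bear, in order to manufacture a directed system of embeddings with the correct colimit. As the present Proposition is superseded by the stronger Corollary~\ref{C:NoLift3}, we do not carry out this construction in detail and content ourselves with the outline above.
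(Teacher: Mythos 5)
Your reduction strategy is the same as the paper's (derive from the hypothetical splitting a regular ring realizing a forbidden dimension group), but your proposal has a genuine gap, located precisely at the step you label ``the main obstacle, and the only step requiring genuine work'' and then decline to carry out. That step is not a routine complement to the argument: it \emph{is} the proof. Moreover, the paper does not obtain it by massaging an arbitrary counterexample $(G,u)$ from~\cite{NonMeas} taken as a black box; it \emph{chooses} the counterexample so that the required presentation comes for free. Concretely, one takes $\Dim\rF_{\xM_3}(X)$ for a set~$X$ with $\card X\geq\aleph_2$, where~$\xM_3$ is the variety of bounded lattices generated by~$\sM_3$; this monoid is not isomorphic to~$\rV(R)$ for any regular ring~$R$ by \cite[Corollary~10.30]{WDim}. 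Local finiteness of~$\xM_3$ gives $\rF_{\xM_3}(X)=\varinjlim\famm{\rF_{\xM_3}(Y)}{Y\subseteq X\text{ finite}}$; each $\rF_{\xM_3}(Y)$ is a finite modular lattice, so $\Dim\rF_{\xM_3}(Y)$ is a finitely generated simplicial monoid; and---this is exactly the point you need and do not have---each inclusion $\rF_{\xM_3}(Y)\into\rF_{\xM_3}(Y')$ is split by the retraction sending the generators outside~$Y$ to~$0$, so that after applying~$\Dim$ and passing to Grothendieck groups all transition maps are split monomorphisms preserving the canonical order-units $\Delta(0,1)$. This retraction trick is what turns the Effros--Handelman--Shen-type colimit into a direct \emph{union} of simplicial groups with order-unit, i.e., an honest diagram in~$\xSe_{\mathrm{grp}}(1)$. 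Nothing in your proposal addresses whether the group of~\cite{NonMeas}, presented abstractly, admits such a presentation; so your conditional reduction, as written, proves nothing. (Your closing appeal to Corollary~\ref{C:NoLift3} cannot be used either: that corollary is proved later and is advertised as superseding Proposition~\ref{P:NonSplit}, not as a prerequisite for it.)

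A secondary weakness lies in your ``key observation'' that~$\lift$ takes values in finite-dimensional C*-algebras. This invokes Elliott's classification, which applies to \emph{separable, unital} AF algebras; the category~$\AF$ of the paper contains non-unital algebras (the compact operators are AF, have $\rK_0\cong\ZZ$ with positive cone~$\ZZ^+$, and are not a regular ring) and even non-separable ones (uncountable C*-direct limits of finite-dimensional algebras), so finite-dimensionality of the values of~$\lift$ does not follow as stated. The paper's endgame is insensitive to this: it forms the C*-direct limit $A:=\varinjlim\famm{\lift\rK_0\bigl(\rF_{\xM_3}(Y)\bigr)}{Y\subset X\text{ finite}}$ and then picks a \emph{dense locally matricial subalgebra} $R\subseteq A$, for which $\rK_0(R)\cong\rK_0(A)$ and hence $\rV(R)\cong\Dim\rF_{\xM_3}(X)$ with~$R$ regular, requiring neither unitality nor finite-dimensionality of the values of~$\lift$. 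If you first justify that~$\lift$ may be assumed to take unital (and separable) values, your shortcut---the algebraic direct limit of finite-dimensional algebras is locally matricial, hence regular---is correct and indeed slightly cleaner than the paper's; without that justification it is another hole.
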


\begin{proof}[Outline of proof]
We shall first invoke some ideas from universal algebra and lattice theory (cf. McKenzie, McNulty, and Taylor~\cite{MMTa87} for background). A \emph{bounded lattice} is a lattice with smallest element and largest element (usually denoted by~$0$ and~$1$, respectively). A \emph{variety of bounded lattices} is the class of all bounded lattices that satisfy a given set of identities, written in the language $(\vee,\wedge,0,1)$. For a variety~$\xV$ of bounded lattices and a set~$X$, we denote by $\rF_{\xV}(X)$ the \emph{free object} on~$X$ within the variety~$\xV$. We shall denote by~$\sM_3$ the lattice of length two with three atoms (cf. Figure~\ref{Fig:M3}) and by~$\xM_3$ the variety that it generates.
\begin{figure}[htb]
\includegraphics{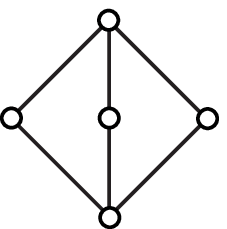}
\caption{The lattice $\sM_3$}\label{Fig:M3}
\end{figure}
As~$\sM_3$ is finite, the variety~$\xM_3$ is \emph{locally finite}, that is, $\rF_{\xM_3}(X)$ is finite whenever~$X$ is finite (this is a classical argument of universal algebra, see, for example, the proof of
McKenzie, McNulty, and Taylor~\cite[Lemma~4.98]{MMTa87}). Furthermore,
 \begin{equation}\label{Eq:FM3(X)lim}
 \rF_{\xM_3}(X)=\varinjlim
 \famm{\rF_{\xM_3}(Y)}{Y\subseteq X\text{ finite}}\,,
 \end{equation}
with canonical transition maps and limiting maps, for any set~$X$.

We proceed by invoking a lattice-theoretical tool. The \emph{dimension monoid}~$\Dim L$ of a lattice~$L$, defined in Wehrung~\cite[Definition~1.1]{WDim}, is always a conical \cm; furthermore, the assignment $L\mapsto\Dim L$ extends to a \emph{functor}. As in~\cite{WDim}, we define~$\rK_0(L)$ as the (preordered) Grothendieck group of~$\Dim L$.

Now fix any set~$X$ with at least~$\aleph_2$ elements. By \cite[Corollary~10.30]{WDim}, there is no regular ring~$R$ such that $\Dim\rF_{\xM_3}(X)\cong\rV(R)$. Furthermore, as the~$\Dim$ functor preserves direct limits (cf. \cite[Proposition~1.4]{WDim}), it follows from~\eqref{Eq:FM3(X)lim} that
 \begin{equation}\label{Eq:DimFM3(X)lim}
 \Dim\rF_{\xM_3}(X)=\varinjlim
 \famm{\Dim\rF_{\xM_3}(Y)}{Y\subset X\text{ finite}}\,,
 \end{equation}
and thus
 \begin{equation}\label{Eq:K0FM3(X)lim}
 \rK_0\bigl(\rF_{\xM_3}(X)\bigr)=\varinjlim
 \famm{\rK_0\bigl(\rF_{\xM_3}(Y)\bigr)}{Y\subset X\text{ finite}}\,.
 \end{equation}
For each finite $Y\subset X$, the lattice~$\rF_{\xM_3}(Y)$ is finite modular, thus its dimension monoid $\Dim\rF_{\xM_3}(Y)$ is a finitely generated simplicial monoid (cf. \cite[Proposition~5.5]{WDim}); in particular, it embeds into its Grothendieck group $\rK_0\bigl(\rF_{\xM_3}(X)\bigr)$.
Furthermore, $\rF_{\xM_3}(Y)$ is a retract of~$\rF_{\xM_3}(X)$ (send every element of~$X\setminus Y$ to~$0$), thus $\rK_0\bigl(\rF_{\xM_3}(Y)\bigr)$ is a retract of $\rK_0\bigl(\rF_{\xM_3}(X)\bigr)$. Finally, all the transition morphisms in~\eqref{Eq:DimFM3(X)lim} are trivially seen to preserve the canonical order-units (namely, using the notation of~\cite{WDim}, $\Delta(0,1)$). Therefore, the direct limit in~\eqref{Eq:K0FM3(X)lim} is, in fact, a \emph{direct union of dimension groups with order-unit}. Hence, by assumption on~$\lift$, we can define an AF C*-algebra as a C*-direct limit,
 \begin{equation}\label{Eq:AFCX}
 A:=\varinjlim\famm{\lift\rK_0\bigl(\rF_{\xM_3}(Y)\bigr)}
 {Y\subset X\text{ finite}}\quad\text{for C*-algebras}\,.
 \end{equation}
Let~$R$ be a dense locally matricial algebra (over~$\CC$) in~$A$. Then $\rK_0(R)\cong\rK_0(A)$. Furthermore, as the functor~$\rK_0$ preserves C*-direct limits and $\rK_0\circ\lift\cong\id$, it follows from~\eqref{Eq:AFCX} that
 \[
 \rK_0(A)=\varinjlim\famm{\rK_0\bigl(\rF_{\xM_3}(Y)\bigr)}
 {Y\subset X\text{ finite}}\quad\text{in the category of dimension groups}\,,
 \]
so $\rK_0(A)\cong\rK_0\bigl(\rF_{\xM_3}(X)\bigr)$, and so $\rV(R)\cong\Dim\rF_{\xM_3}(X)$. However, as~$R$ is von~Neumann regular, this is a contradiction.
\end{proof}

The proof of Proposition~\ref{P:NonSplit} may feel quite outlandish to a number of readers: while its statement is, essentially, combinatorial, its proof involves the counterexample of cardinality~$\aleph_2$ introduced in~\cite{NonMeas}. Extracting the gist of that proof will lead us to the proof of Theorem~\ref{T:NoLift}, which is, to a large extent, a finitary result, involving neither transfinite cardinals, nor universal algebra, nor lattice theory. It will also extend Proposition~\ref{P:NonSplit} to C*-algebras of real rank~$0$.

\section{The category of premeasured rings}\label{S:MeasRing}

\begin{definition}\label{D:Meas}
Let $M$ be a \emph{conical} \cm\ and let~$R$ be a ring. An \emph{$M$-valued premeasure on~$R$} is a map~$\mu\colon\Idp_\infty(R)\to M$ such that
\begin{itemize}
\item[(M0)] $\mu(0)=0$;

\item[(M1)] $\mu(a+b)=\mu(a)+\mu(b)$, for all orthogonal idempotents $a,b\in\Idp_\infty(R)$;

\item[(M2)] $a\sim b$ implies that $\mu(a)=\mu(b)$, for all $a,b\in\Idp_\infty(R)$. (\emph{Recall that~$\sim$ stands for Murray-von Neumann equivalence, see Section~\textup{\ref{S:rV(R)}}}.)
\end{itemize}
We say that the premeasure~$\mu$ is a \emph{measure} if it satisfies the condition
\begin{itemize}
\item[(M0$^+$)] $\mu(e)=0$ if{f} $e=0$, for each $e\in\Idp_\infty(R)$.
\end{itemize}
We say that the premeasure~$\mu$ is a \emph{V-premeasure} if it satisfies the following \emph{V-condition}:
\begin{itemize}
\item[(M3)] For all $c\in\Idp_\infty(R)$ and all $\alpha,\beta\in M$ such that $\mu(c)= \alpha+\beta$, there are orthogonal idempotent $a,b\in\Idp_\infty(R)$ such that $c=a+b$, $\mu(a)= \alpha$, and $\mu(b)=\beta$.
\end{itemize}

We say that~$\mu$ is a \emph{V-measure} if it is both a measure and a V-premeasure.

We shall use the notation $M=M_{\mu}$ (the \emph{codomain} of~$\mu$).
\end{definition}

A fundamental example of a V-measure is the following.

\begin{example}\label{Ex:CanVMeas}
Let $R$ be a ring. Then the assignment $\tau_R\colon\Idp_\infty(R)\to\rV(R)$, $e\mapsto[e]_R$ is a measure on~$R$. Due to Lemma~\ref{L:VmeasDec}, $\tau_R$ is actually a V-measure. We call it the \emph{canonical V-measure} on~$R$.
\end{example}

\begin{definition}\label{D:Vhom}
Let $M$ and $N$ be conical \cm s. A homomorphism $f\colon M\to N$ is a \emph{pre-\Vhom} if whenever $a,b\in N$ and $c\in M$ with $f(c)=a+b$, there are $x,y\in M$ such that $c=x+y$, $f(x)=a$, and $f(y)=b$.

If, in addition, $f^{-1}\set{0_N}={0_M}$, then we say that~$f$ is a \emph{\Vhom} (cf. Dobbertin \cite[Definition~1.2]{Dobb82}).
\end{definition}

The following result relates the definition of a measure given above, the canonical V-measure, and the notion of \Vhom. It expresses the universality of~$\tau_R$ as a premeasure on~$R$, and its proof is a straightforward exercise.

\begin{proposition}\label{P:FactMeas}
Let~$R$ be a ring, let~$M$ be a conical \cm, and let~$\mu$ be an $M$-valued premeasure on~$R$. Then there exists a unique monoid homomorphism $\ol{\mu}\colon\rV(R)\to M$ such that $\mu=\ol{\mu}\circ\tau_R$. Furthermore,
\begin{enumerate}
\item $\mu$ is a measure if{f} $\ol{\mu}^{-1}\set{0}=\set{0}$;

\item $\mu$ is a V-premeasure if{f} $\ol{\mu}$ is a pre-\Vhom.
\end{enumerate}
\end{proposition}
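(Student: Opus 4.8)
The plan is to define $\ol\mu$ by $\ol\mu([e]_R):=\mu(e)$ for each $e\in\Idp_\infty(R)$. Since $\rV(R)=\setm{[e]_R}{e\in\Idp_\infty(R)}$, the canonical V-measure $\tau_R$ is surjective, so this prescription covers all of $\rV(R)$ and, once shown to be well defined, forces uniqueness: any homomorphism $\nu$ with $\nu\circ\tau_R=\mu$ must satisfy $\nu([e]_R)=\mu(e)$. Well-definedness is exactly condition~(M2): if $[a]_R=[b]_R$ then $a\sim b$, hence $\mu(a)=\mu(b)$. To see that $\ol\mu$ is a monoid homomorphism, I would use the orthogonalization device recalled in Section~\ref{S:rV(R)}: given $a,b\in\Idp_\infty(R)$, choose $b'\sim b$ with $ab'=b'a=0$, so that $[a]_R+[b]_R=[a+b']_R$; then (M1) gives $\mu(a+b')=\mu(a)+\mu(b')$ and (M2) gives $\mu(b')=\mu(b)$, whence $\ol\mu([a]_R+[b]_R)=\ol\mu([a]_R)+\ol\mu([b]_R)$. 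Finally $\ol\mu(0)=\mu(0)=0$ by~(M0).

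For the two equivalences I would first record the elementary fact that $[e]_R=0$ in $\rV(R)$ if and only if $e=0$: if $e\sim0$ then $e=xy$ and $0=yx$ for some $x,y$, so $e=e^2=x(yx)y=0$. Combined with $\mu(e)=\ol\mu([e]_R)$ and the surjectivity of $\tau_R$, this yields~(1) at once: $\mu$ is a measure precisely when (M0$^+$) holds, i.e.\ $\mu(e)=0$ implies $e=0$, and under the identification $\mu(e)=\ol\mu([e]_R)$ this says exactly that $\ol\mu^{-1}\set{0}=\set{0}$.

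For~(2), in the forward direction assume the V-condition~(M3). Given $\xi\in\rV(R)$ and $\alpha,\beta\in M$ with $\ol\mu(\xi)=\alpha+\beta$, write $\xi=[c]_R$; then $\mu(c)=\alpha+\beta$, so (M3) yields orthogonal idempotents $a,b$ with $c=a+b$, $\mu(a)=\alpha$, $\mu(b)=\beta$, and $[a]_R,[b]_R$ witness the pre-\Vhom\ property for $\ol\mu$. The converse is the only step requiring outside input: assuming $\ol\mu$ is a pre-\Vhom, let $\mu(c)=\alpha+\beta$; then $\ol\mu([c]_R)=\alpha+\beta$, so there are $\gamma,\delta\in\rV(R)$ with $[c]_R=\gamma+\delta$, $\ol\mu(\gamma)=\alpha$, $\ol\mu(\delta)=\beta$. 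Here I would invoke Lemma~\ref{L:VmeasDec} to lift the decomposition $[c]_R=\gamma+\delta$ to orthogonal idempotents $a,b$ with $c=a+b$, $[a]_R=\gamma$, $[b]_R=\delta$; then $\mu(a)=\alpha$ and $\mu(b)=\beta$, establishing~(M3). The main (indeed only) obstacle is precisely this lifting of a monoid-level refinement back to an orthogonal idempotent decomposition of the given $c$, and it is handled entirely by Lemma~\ref{L:VmeasDec}; everything else is routine bookkeeping with (M0)--(M2) and the surjectivity of $\tau_R$.
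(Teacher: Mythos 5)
Your proof is correct, and since the paper explicitly leaves this result as ``a straightforward exercise,'' your argument is precisely the intended one: well-definedness from (M2), additivity via the orthogonalization device of Section~\ref{S:rV(R)}, the elementary fact that $[e]_R=0$ forces $e=0$ for part~(1), and Lemma~\ref{L:VmeasDec} for the nontrivial direction of part~(2). No gaps; nothing further is needed.
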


\begin{definition}\label{D:MeasCat}
For a category~$\xR$ of rings and ring homomorphisms, we shall denote by $\xR^{\pmeas}$ the category where the objects (\emph{premeasured rings}) are the pairs $(R,\mu)$, where~$\mu$ is a premeasure on the ring~$R\in\xR$ (with values in some conical \cm), and, for objects $(R,\mu)$ and $(S,\nu)$, a \emph{morphism} from $(R,\mu)$ to $(S,\nu)$ is a pair $(f,\tilde{f})$, where $f\colon R\to S$ is a ring homomorphism, $\tilde{f}\colon M_{\mu}\to M_{\nu}$ is a monoid homomorphism, and $\nu\circ\Idp_\infty(f)=\tilde{f}\circ\mu$ (cf. Figure~\ref{Fig:MeasHom}).
\begin{figure}[htb]
 \[
\xymatrixrowsep{2pc}\xymatrixcolsep{1.5pc}
\def\labelstyle{\displaystyle}
\xymatrix{
R\ar[rr]_{f} && S && \Idp_\infty(R)\ar[rr]_{\Idp_\infty(f)}\ar[d]_{\mu} 
&& \Idp_\infty(S)\ar[d]^{\nu}\\
M_{\mu}\ar[rr]^{\tilde{f}} && M_{\nu}&& M_{\mu}\ar[rr]^{\tilde{f}} && M_{\nu}
}
 \]
\caption{A morphism from $(R,\mu)$ to $(S,\nu)$}
\label{Fig:MeasHom}
\end{figure}
The composition of morphisms is defined componentwise (i.e., $(f,\tilde{f})\circ(g,\tilde{g})=(f\circ g,\tilde{f}\circ\tilde{g})$).

We define similarly $\xR^{\meas}$, $\xR^{\Vpmeas}$, $\xR^{\Vmeas}$, using measures, V-premeasures, and V-measures, respectively, instead of premeasures.
\end{definition}

Next, we introduce two functors: one \emph{to} $\Ring^{\pmeas}$, and one \emph{from} $\Ring^{\pmeas}$.

\begin{definition}\label{D:TwoFunct}
The \emph{canonical functor} $\tau\colon\Ring\to\Ring^{\pmeas}$ is defined by $\tau(R):=(R,\tau_R)$ (cf. Example~\ref{Ex:CanVMeas}) and $\tau(f):=(f,\rV(f))$, for any rings~$R$ and~$S$ and any ring homomorphism $f\colon R\to S$.

The \emph{projection functor} $\forg\colon\Ring^{\pmeas}\to\CM$ is defined by $\forg(R,\mu):=M_{\mu}$ and $\forg(f,\tilde{f}):=\tilde{f}$, for any premeasured rings $(R,\mu)$ and $(S,\nu)$ and any morphism $(f,\tilde{f})\colon(R,\mu)\to(S,\nu)$.
\end{definition}

\begin{remark}\label{Rk:picirctau}
It is trivial that $\rV= \forg\circ\,\tau$. In particular, if a diagram can be lifted by \emph{rings} with respect to the functor~$\rV$, then it can be lifted by \emph{premeasured rings} with respect to the functor~$\forg $.
\end{remark}

{}From now on, we shall often omit the second coordinate~$\tilde{f}$ in a morphism $(f,\tilde{f})$ from $\Ring^{\pmeas}$, thus simply writing $f\colon(R,\mu)\to(S,\nu)$ and specifying what is~$\tilde{f}$ in case needed.

\begin{lemma}\label{L:ReflSubcat}
The category $\Exch^{\meas}$ is a reflective subcategory of~$\Exch^{\pmeas}$, with reflector $\gQr\colon\Exch^{\pmeas}\to\Exch^{\meas}$ satisfying $\forg\circ\gQr=\forg$. Furthermore,
\begin{enumerate}
\item $\gQr$ sends surjective premeasures to surjective measures;

\item $\gQr$ sends V-premeasures to V-measures;

\item $\gQr$ sends regular rings to regular rings.
\end{enumerate}
\end{lemma}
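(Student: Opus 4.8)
The plan is to realize the reflector by dividing out the ``null idempotents'' of the premeasure. Given a premeasured exchange ring $(R,\mu)$ with $M:=M_\mu$, I first pass through Proposition~\ref{P:FactMeas} to obtain the monoid homomorphism $\ol\mu\colon\rV(R)\to M$ with $\mu=\ol\mu\circ\tau_R$, the point being that $\mu$ fails to be a measure exactly along the o-ideal $\ol\mu^{-1}\set{0}$ of $\rV(R)$. Setting $E:=\setm{e\in\Idp_\infty(R)}{\mu(e)=0}$, the classes $\setm{[e]_R}{e\in E}$ are precisely the elements of $\ol\mu^{-1}\set{0}$; letting $I$ be the two-sided ideal of $R$ generated by the entries of the members of $E$, Lemma~\ref{L:GenV(I)} identifies $\rV(I)$ with the o-ideal of $\rV(R)$ generated by those classes, that is, $\rV(I)=\ol\mu^{-1}\set{0}$. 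I then define $\gQr(R,\mu):=(R/I,\mu')$, with $\mu'$ built below, and take the reflection unit to be $(\pi,\id_M)$ where $\pi\colon R\onto R/I$ is the projection.

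Second, I build $\mu'$ and check it is a measure. By Proposition~\ref{P:V(R/I)}, $R$ exchange forces $R/I$ exchange with $\rV(R/I)\cong\rV(R)/\rV(I)$ and idempotents lifting modulo $I$ (and, passing to matrix rings \emph{via} Lemma~\ref{L:eReErch}, modulo $\Mat_\infty(I)$, so that $\Idp_\infty(\pi)$ is surjective). Since $\rV(I)=\ol\mu^{-1}\set{0}$, the map $\ol\mu$ is constant on $\equiv_{\rV(I)}$-classes, hence factors as $\ol\mu=\ol{\mu'}\circ q$ through the quotient $q\colon\rV(R)\onto\rV(R)/\rV(I)\cong\rV(R/I)$; I set $\mu':=\ol{\mu'}\circ\tau_{R/I}$, so $M_{\mu'}=M$ and $\forg\circ\gQr=\forg$ holds on objects. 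The crux here is that $\ol{\mu'}(\bar\xi)=0$ forces $\xi\in\rV(I)$, i.e.\ $\bar\xi=0$, because $\rV(I)$ is an o-ideal; by Proposition~\ref{P:FactMeas}(i) this makes $\mu'$ a measure.

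Third, I verify the universal property. Given a morphism $(f,\tilde f)\colon(R,\mu)\to(S,\nu)$ into a measured exchange ring, the relation $\nu\circ\Idp_\infty(f)=\tilde f\circ\mu$ gives $\nu(f(e))=\tilde f(\mu(e))=0$ for each $e\in E$; since $\nu$ satisfies (M0$^+$), $f(e)=0$, so $f$ annihilates every entry of every member of $E$, whence $f(I)=0$. Thus $f$ factors uniquely as $f=g\circ\pi$ through a ring homomorphism $g\colon R/I\to S$, and I claim $(g,\tilde f)$ is the required factorization. That it is a morphism in $\Exch^{\pmeas}$ I check using surjectivity of $\Idp_\infty(\pi)$: writing $\bar e=\pi(e)$ one computes $\nu(g(\bar e))=\nu(f(e))=\tilde f(\mu(e))=\tilde f(\mu'(\bar e))$. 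Uniqueness is immediate since $\pi$ is surjective and the unit has identity second coordinate, which forces the $M$-component to be $\tilde f$ and also yields $\forg\circ\gQr=\forg$ on morphisms; functoriality of $\gQr$ is then automatic from the universal arrows.

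Finally I read off (i)--(iii). For (i), if $\mu$ is surjective then so is $\ol\mu$, hence $\ol{\mu'}$ (as $\ol\mu=\ol{\mu'}\circ q$ with $q$ surjective) and hence $\mu'=\ol{\mu'}\circ\tau_{R/I}$. For (ii), if $\mu$ is a V-premeasure then $\ol\mu$ is a pre-\Vhom\ by Proposition~\ref{P:FactMeas}(ii); lifting any relation $\ol{\mu'}(\bar c)=\alpha+\beta$ along $q$, applying the pre-\Vhom\ property of $\ol\mu$, and projecting back shows $\ol{\mu'}$ is a pre-\Vhom, so $\mu'$ is a V-premeasure, hence a V-measure. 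For (iii), a quotient of a regular ring by a two-sided ideal is regular, so $R$ regular gives $R/I$ regular. The main obstacle is the bookkeeping in the universal property: deducing $f(I)=0$ from (M0$^+$) together with the morphism relation, and then confirming that the factorization $(g,\tilde f)$ really is a morphism of premeasured rings by lifting idempotents of $R/I$ back through $\pi$.
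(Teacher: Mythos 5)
Your proposal is correct and takes essentially the same approach as the paper's own proof: both construct the reflector by killing the two-sided ideal~$I$ generated by the entries of the null idempotents, factor the premeasure through $\rV(R)/\rV(I)\cong\rV(R/I)$ \emph{via} Lemma~\ref{L:GenV(I)} and Proposition~\ref{P:V(R/I)}, and verify the universal property by showing that any morphism into a measured exchange ring annihilates~$I$. Your only (harmless) variations are making the equality $\rV(I)=\ol{\mu}^{-1}\set{0}$ explicit and proving item~(ii) at the monoid level through Proposition~\ref{P:FactMeas}(ii) and surjectivity of the quotient map, where the paper invokes idempotent lifting modulo~$I$ directly.
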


\begin{proof}
Let $\mu$ be a premeasure, with values in a conical \cm~$M$, on an exchange ring~$R$, and denote by $\ol{\mu}\colon\rV(R)\to\nobreak M$ the canonical monoid homomorphism.
Denote by~$I$ the two-sided ideal of~$R$ generated by the entries of all $x\in\Idp_\infty(R)$ such that $\mu(x)=0$.
By Lemma~\ref{L:GenV(I)} together with the conicality of~$M$, $\mu(e)=0$ for each $e\in\Idp_\infty(I)$; so $\ol{\mu}\res_{\rV(I)}=0$. Therefore, denoting by $\rho\colon\rV(R)\onto\rV(R)/\rV(I)$ the canonical projection, there exists a unique monoid homomorphism $\tilde{\mu}\colon\rV(R)/\rV(I)\to M$ such that $\ol{\mu}=\tilde{\mu}\circ\rho$. Then $R^*:=R/I$ is an exchange ring, while, composing~$\tilde{\mu}$ with the canonical isomorphism $\rV(R/I)\cong\rV(R)/\rV(I)$ (cf. Proposition~\ref{P:V(R/I)}) and using the fact that idempotents can be lifted modulo~$I$, we obtain a (necessarily unique) monoid homomorphism $\ol{\mu}^*\colon\rV(R/I)\to M$ such that
 \[
 \ol{\mu}^*([e+\Mat_\infty(I)]_{R/I})=\mu(e)\,,\quad
 \text{for each }e\in\Idp_\infty(R)\,.
 \]
The map~$\mu^*$ defined by the rule $\mu^*(e):=\ol{\mu}^*([e]_{R^*})$ for each $e\in\Idp_\infty(R^*)$ is an $M$-valued premeasure on~$R^*$, and
 \begin{equation}\label{Eq:Defnmu*}
 \mu^*(e+\Mat_\infty(I))=\mu(e)\,,\quad
 \text{for each }e\in\Idp_\infty(R)\,.
 \end{equation}
For each $e\in\Idp_\infty(R)$ such that $\mu^*(e+\Mat_\infty(I))=0$, that is, $\mu(e)=0$, it follows from the definition of~$I$ that $e\in\Mat_\infty(I)$. Therefore, \emph{$\mu^*$ is an $M$-valued measure on~$R^*$}.

\begin{sclaim}
$(R^*,\mu^*)$ is the $\Exch^{\meas}$-reflection of $(R,\mu)$, with $\Exch^{\meas}$-reflection morphism the canonical projection $\pi\colon R\onto R^*$ with $\tilde{\pi}:=\id_M$.
\end{sclaim}

\begin{scproof}
We must prove that for every measured exchange ring $(S,\nu)$ and every morphism $\varphi\colon(R,\mu)\to(S,\nu)$, there exists a unique morphism\linebreak $\varphi^*\colon(R^*,\mu^*)\to(S,\nu)$ such that $\varphi=\varphi^*\circ\pi$. For each $x\in I$, $\nu(\varphi(x))=\tilde{\varphi}(\mu(x))=\tilde{\varphi}(0)=0$, thus, as~$\nu$ is a measure, $\varphi(x)=0$. It follows that there exists a unique ring homomorphism $\varphi^*\colon R^*\to S$ such that $\varphi=\varphi^*\circ\pi$. Setting $\tilde{\varphi^*}:=\tilde{\varphi}$, we obtain that~$\varphi$ is as desired.
\end{scproof}

Denote by $\gQr\colon(R,\mu)\mapsto(R^*,\mu^*)$ the $\Exch^{\meas}$-reflection functor. As~$\mu$ and~$\mu^*$ have the same codomain and $\tilde{\varphi^*}:=\tilde{\varphi}$ in the proof of the Claim above, $\forg\circ\gQr=\gQr$ and the surjectivity of~$\mu$ implies the one of~$\mu^*$. If~$\mu$ is a V-premeasure, then, as idempotents can be lifted modulo~$I$, so is~$\mu^*$ (cf.~\eqref{Eq:Defnmu*}). Finally, if~$R$ is a regular ring, then $R^*=R/I$ is also regular.
\end{proof}

The following analogue of Lemma~\ref{L:ReflSubcat} for C*-algebras of real rank~$0$ is valid.

\begin{lemma}\label{L:ReflSubRR0}
The category $\xRR^{\meas}$ is a reflective subcategory of~$\xRR^{\pmeas}$, with reflector $\gQa\colon\xRR^{\pmeas}\to\xRR^{\meas}$ satisfying $\forg\circ\gQa=\forg$. Furthermore,
\begin{enumerate}
\item $\gQa$ sends surjective premeasures to surjective measures;

\item $\gQa$ sends V-premeasures to V-measures.
\end{enumerate}
\end{lemma}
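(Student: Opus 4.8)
The plan is to mimic the proof of Lemma~\ref{L:ReflSubcat} almost verbatim, replacing the quotient construction~$R/I$ by its C*-algebraic analogue and invoking the C*-specific versions of the auxiliary facts established earlier. First I would start with a premeasure~$\mu$, with values in a conical \cm~$M$, on a C*-algebra~$R$ of real rank~$0$, and let $\ol{\mu}\colon\rV(R)\to M$ be the canonical monoid homomorphism of Proposition~\ref{P:FactMeas}. I would then set~$I$ to be the \emph{closed} two-sided ideal of~$R$ generated by the entries of all $x\in\Idp_\infty(R)$ with $\mu(x)=0$; the passage to the closure is the key difference from the purely algebraic case, since quotients of C*-algebras are taken modulo closed ideals. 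Since~$R$ has real rank~$0$, it is an exchange ring by Proposition~\ref{P:RR0Exch}, and~$R/I$ is again a C*-algebra of real rank~$0$ (closed ideals and quotients of real-rank-zero C*-algebras have real rank zero, a standard fact that parallels the exchange-ring statement used before).

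The main obstacle, and the point at which the C*-argument genuinely departs from the algebraic one, is to verify that $\ol{\mu}$ still vanishes on~$\rV(I)$ after~$I$ has been \emph{closed}. In Lemma~\ref{L:ReflSubcat} this was immediate from Lemma~\ref{L:GenV(I)} together with conicality, but here~$I$ contains limit points, so a projection $e\in\Idp_\infty(I)$ of the \emph{closure} need not lie in the algebraic ideal generated by the zero-measure idempotents. This is exactly where Lemma~\ref{L:IbarC*} enters: every projection of~$\ol{L}$ (for a left ideal~$L$) is equivalent to a projection of~$L$. Applying this, together with (M2), I would show that for each $e\in\Idp_\infty(I)$ there is a projection~$e'$ in the \emph{algebraic} ideal generated by the zero-measure idempotents with $e\sim e'$, whence $\mu(e)=\mu(e')=0$ by Lemma~\ref{L:GenV(I)} and conicality. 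Thus $\ol{\mu}\res_{\rV(I)}=0$, and the rest of the construction proceeds as before.

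Having established that $\ol{\mu}$ factors through $\rV(R)/\rV(I)\cong\rV(R/I)$ (using the real-rank-zero analogue of Proposition~\ref{P:V(R/I)}, which lets one lift idempotents modulo~$I$ and gives the isomorphism $\rV(R)/\rV(I)\cong\rV(R/I)$), I would define the induced premeasure~$\mu^*$ on $R^*:=R/I$ by the rule $\mu^*(e+\Mat_\infty(I)):=\mu(e)$, exactly as in~\eqref{Eq:Defnmu*}. The fact that~$\mu^*$ is a genuine \emph{measure} on~$R^*$ follows because $\mu^*(e+\Mat_\infty(I))=0$ forces $e$ into the closed ideal~$I$, hence $e+\Mat_\infty(I)=0$, using again Lemma~\ref{L:IbarC*} to pass between algebraic and closed ideals. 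I would then formulate and prove the universal property as a \textbf{Claim}: for every measured C*-algebra $(S,\nu)$ of real rank~$0$ and every morphism $\varphi\colon(R,\mu)\to(S,\nu)$, the equation $\nu(\varphi(x))=\tilde{\varphi}(\mu(x))=0$ for $x$ in the generating idempotents, together with $\nu$ being a measure, forces $\varphi$ to annihilate those idempotents, hence (by continuity of the \Chom~$\varphi$) the whole closed ideal~$I$, yielding a unique factorization $\varphi=\varphi^*\circ\pi$ through the canonical projection $\pi\colon R\onto R^*$.

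Finally, setting $\gQa\colon(R,\mu)\mapsto(R^*,\mu^*)$ to be the resulting reflection functor, I would record $\forg\circ\gQa=\forg$ from the identity $\tilde{\varphi^*}=\tilde{\varphi}$ (the codomains of~$\mu$ and~$\mu^*$ agree), that surjectivity of~$\mu$ passes to~$\mu^*$, and that the V-premeasure condition~(M3) is preserved because idempotents lift modulo the closed ideal~$I$. Note that, unlike Lemma~\ref{L:ReflSubcat}, there is no regularity clause to preserve, which is consistent with the two enumerated items in the statement. The only real subtlety throughout is the systematic replacement of algebraic ideals by closed ideals, mediated at each step by Lemma~\ref{L:IbarC*}; everything else transcribes directly from the proof of Lemma~\ref{L:ReflSubcat}.
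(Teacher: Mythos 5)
Your proposal is correct and follows essentially the same route as the paper: the paper's proof likewise reduces everything to Lemma~\ref{L:ReflSubcat} via Proposition~\ref{P:RR0Exch}, replaces the algebraic ideal~$I$ by its topological closure~$\ol{I}$ (citing Brown and Pedersen for real rank~$0$ of $R/\ol{I}$), and uses Lemma~\ref{L:IbarC*} together with (M2) to show that~$\mu$ vanishes on the projections of~$\ol{I}$, then defines~$\mu^*$ by the same formula $\mu^*(e+\Mat_\infty(\ol{I}))=\mu(e)$. The remaining points you address (factorization through the quotient, the universal property via continuity of \Chom s, preservation of surjectivity and of the V-condition through idempotent lifting, and the absence of a regularity clause) all run exactly as in the paper.
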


\begin{proof}
Using Proposition~\ref{P:RR0Exch}, the proof is almost identical to the one of Lemma~\ref{L:ReflSubcat}. The main difference is the need to replace the ideal~$I$ of~$R$ by its topological closure~$\ol{I}$ (so $R/{\ol{I}}$ is still a C*-algebra, necessarily of real rank~$0$, see Brown and Pedersen~\cite[Theorem~3.14]{BrPe}). We need to prove that $\mu(x)=0$ for each projection~$x$ of~$\ol{I}$. By Lemma~\ref{L:IbarC*}, $x$ is equivalent to a projection~$y$ of~$I$. By the proof of Lemma~\ref{L:ReflSubcat}, $\mu(y)=0$; thus $\mu(x)=0$. Now the reflection morphism $\gQa\colon\xRR^{\pmeas}\to\xRR^{\meas}$ is defined by $\gQa(R,\mu):=(R^*,\mu^*)$, where this time
 \begin{equation}\label{Eq:Defnmu*RR0}
 \mu^*(e+\Mat_\infty(\ol{I}))=\mu(e)\,,\quad
 \text{for each }e\in\Idp_\infty(R)\,,
 \end{equation}
while $\pi\colon R\onto R/{\ol{I}}$ is the canonical projection and $\tilde{\pi}=\id_M$. The rest of the proof runs as the one of Lemma~\ref{L:ReflSubcat}.
\end{proof}

\section{V-semiprimitive rings}\label{S:Vsemi}

The following definition introduces a strengthening of the weakly V-semiprimitive rings introduced in Definition~\ref{D:wVsem}.

\begin{definition}\label{D:Vsemi}
A ring~$R$ is \emph{V-semiprimitive} if for all $a,b\in\Idp_\infty(R)$, $[a]_R\perp[b]_R$ in~$\rV(R)$ implies that $ab=0$.
\end{definition}

If~$\rV(R)$ is totally ordered with respect to its algebraic preordering, then~$R$ is V-semiprimitive. In particular, this holds if $\rV(R)\cong\ZZ^+$. This isomorphism occurs in case~$R$ is a principal ideal domain. Due to the Quillen-Suslin Theorem, it also holds in case~$R$ is a polynomial ring over a field. Another important class of V-semiprimitive rings, which also explains the terminology, is provided by the following result.

\begin{proposition}\label{P:SeExVsem}
Every semiprimitive exchange ring is V-semiprimitive.
\end{proposition}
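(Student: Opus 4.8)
The goal is to prove Proposition~\ref{P:SeExVsem}: every semiprimitive exchange ring is V-semiprimitive. The plan is to combine the general orthogonality machinery already developed for exchange rings with the defining property of semiprimitivity, namely $\rJ(R)=\set{0}$. The key point is that the difference between \emph{weakly} V-semiprimitive (Definition~\ref{D:wVsem}) and \emph{V-semiprimitive} (Definition~\ref{D:Vsemi}) is precisely the difference between the conclusion $ab\in\rJ(\Mat_\infty(R))$ and the stronger conclusion $ab=0$. So the entire content of the proposition is to upgrade the former to the latter under the hypothesis of semiprimitivity, which should follow once we know that the Jacobson radical of $\Mat_\infty(R)$ vanishes whenever that of $R$ does.

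Concretely, I would start by fixing $a,b\in\Idp_\infty(R)$ with $[a]_R\perp[b]_R$ and invoking Proposition~\ref{P:Exch2wVsem}, which tells us that every exchange ring is weakly V-semiprimitive; hence $ab\in\rJ(\Mat_\infty(R))$. By Lemma~\ref{L:eReErch}(ii) each matrix ring $\Mat_n(R)$ is again an exchange ring, and $\Mat_\infty(R)=\bigcup_{n}\Mat_n(R)$ is a directed union of these. The crux is therefore to establish that $\rJ(\Mat_\infty(R))=\set{0}$ whenever $\rJ(R)=\set{0}$. First I would recall the standard fact (left-right symmetric, as noted for the radical via Jacobson~\cite{Jaco45} and Herstein~\cite{Hers}) that $\rJ(\Mat_n(R))=\Mat_n(\rJ(R))$ even in the non-unital setting; this is the matrix-radical identity. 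When $\rJ(R)=\set{0}$ this gives $\rJ(\Mat_n(R))=\set{0}$ for every $n$. Then, because $\Mat_\infty(R)$ is the direct limit of the $\Mat_n(R)$ along the corner embeddings $x\mapsto\left(\begin{smallmatrix}x&0\\0&0\end{smallmatrix}\right)$, and the Jacobson radical is the union of the radicals of the pieces of a directed union (an element of $\rJ(\Mat_\infty(R))$ lies in some $\Mat_n(R)$ and its quasi-invertibility witness can be taken in a larger $\Mat_m(R)$), I would conclude that $\rJ(\Mat_\infty(R))=\bigcup_n\rJ(\Mat_n(R))=\set{0}$. Therefore $ab=0$, which is exactly the V-semiprimitivity condition.

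The main obstacle I anticipate is the careful handling of the \emph{non-unital} case in both of these radical facts. The matrix identity $\rJ(\Mat_n(R))=\Mat_n(\rJ(R))$ is classical for unital rings but must be justified for non-unital $R$ using the quasi-regularity characterization $(\forall x\in J)(\exists s)(x+s-sx=0)$ recalled from Jacobson~\cite[Theorem~1]{Jaco45} in the excerpt. Similarly, the ``radical of a directed union is the directed union of the radicals'' step requires knowing that quasi-invertibility witnesses stay within $\Mat_\infty(R)$; here one uses that any finite collection of entries already lives in some $\Mat_n(R)$, and the corner embeddings are (non-unital) ring homomorphisms compatible with quasi-inverses, so an element is quasi-regular in $\Mat_\infty(R)$ iff it is quasi-regular in a large enough corner. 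I expect the author may instead route around these facts with a slicker argument, perhaps by applying Lemma~\ref{L:V(A)capV(B)} directly together with semiprimitivity to conclude $A\cap B=\set{0}$ rather than merely $A\cap B\subseteq\rJ(R)$, and then extracting $ab=0$ from the entries lying in $A\cap B$; but morally the two routes are the same, and the radical-of-matrices computation is the natural backbone of the proof.
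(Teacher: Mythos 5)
Your conclusion is correct, and your closing parenthetical is in fact \emph{exactly} the paper's proof: fix $a,b$ with $[a]_R\perp[b]_R$, let $A$ and $B$ be the two-sided ideals generated by their entries, invoke Lemma~\ref{L:V(A)capV(B)} to get $A\cap B\subseteq\rJ(R)=\set{0}$, and observe that every entry of $ab$ lies in $A\cap B$, so $ab=0$. Your main route --- Proposition~\ref{P:Exch2wVsem} to get $ab\in\rJ(\Mat_\infty(R))$, followed by the claim that $\rJ(R)=\set{0}$ forces $\rJ(\Mat_\infty(R))=\set{0}$ --- is genuinely different and considerably heavier, since it requires controlling the radical of $\Mat_\infty(R)$, something the paper's argument never needs.

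That claim is true, but your justification of it has a real soft spot. ``The Jacobson radical of a directed union is the union of the radicals of the pieces'' is false as a general principle: for instance $\ZZ_{(2)}=\bigcup_{m\text{ odd}}\ZZ[1/m]$ is a directed union of semiprimitive rings, yet $\rJ(\ZZ_{(2)})=2\ZZ_{(2)}\neq\set{0}$, so the inclusion you need (radical of the union inside the union of the radicals) can genuinely fail. Likewise, quasi-regularity of an individual element never by itself places that element in the radical (nonzero nilpotents are quasi-regular). What rescues your route is the specific corner structure of $\Mat_n(R)\subseteq\Mat_\infty(R)$: if $y\in\Mat_n(R)$ and $s\in\Mat_\infty(R)$ satisfy $y+s-sy=0$, then $s=sy-y$, and reading off the upper-left $n\times n$ block $s_{11}$ of this equation gives $y+s_{11}-s_{11}y=0$, so the quasi-inverse can be taken \emph{inside} $\Mat_n(R)$. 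Hence $\rJ(\Mat_\infty(R))\cap\Mat_n(R)$ is a quasi-regular two-sided ideal of $\Mat_n(R)$, therefore contained in $\rJ(\Mat_n(R))$; and the non-unital identity $\rJ(\Mat_n(R))=\Mat_n(\rJ(R))$ follows by unitization, using $\rJ(I)=I\cap\rJ(S)$ for a two-sided ideal $I$ of a ring $S$. With those two repairs your argument closes, but it buys nothing in this instance: the paper's proof stays entirely at the level of ideals of $R$ itself, reuses the lemma already in hand, and is three lines long.
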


We emphasize here that the rings in question are not necessarily unital.

\begin{proof}
Let~$R$ be a semiprimitive exchange ring and let $a,b\in\Idp_\infty(R)$ such that $[a]\perp\nobreak[b]$ in~$\rV(R)$. Denoting by~$A$ and~$B$ the two-sided ideals of~$R$ generated by the entries of~$a$ and~$b$, respectively, it follows from Lemma~\ref{L:V(A)capV(B)} that $A\cap B$ is contained in~$\rJ(R)$, thus, by assumption, $A\cap B=\set{0}$, so $ab=0$.
\end{proof}

The following two examples show that none of the assumptions of semiprimitivity and exchange property can be dispensed with in the statement of Proposition~\ref{P:SeExVsem}.

\begin{example}\label{Ex:TrMat}
Let $K$ be a field (any unital exchange ring would do). Then the ring~$R$ of all upper triangular $2\times2$ matrices over~$K$ is an exchange ring, but it is not V-semiprimitive. To show the latter statement, set $a:=\begin{pmatrix}1&1\\0&0\end{pmatrix}$ and $b:=\begin{pmatrix}0&0\\0&1\end{pmatrix}$. Observe that $\rV(R)\cong\ZZ^+\times\ZZ^+$, and once the two monoids are identified, $[a]=(1,0)$ while $[b]=(0,1)$. In particular, $[a]\perp[b]$, although $ab=\begin{pmatrix}0&1\\0&0\end{pmatrix}$ is nonzero.
\end{example}

\begin{example}\label{Ex:semiprnotV}
We denote by $\rC(X,A)$ the C*-algebra of all $A$-valued continuous functions on a topological space~$X$, for any C*-algebra~$A$.
We denote by $[0,1]$ the unit interval of~$\RR$ and we set
 \[
 \DD:=
 \setm{x\in\rC([0,1],\Mat_2(\CC))}{x(0)\text{ is a diagonal matrix}}\,.
 \]
This type of construction appears in Su~\cite[Section~1.2]{Su95}.
The ring~$\DD$ is a key ingredient in the main construction in Section~\ref{S:C*algLift}. Then~$\DD$ is a semiprimitive unital ring (because it is a unital C*-algebra, cf. Proposition~\ref{P:C*Semipr}). However, $\DD$ is not V-semiprimitive. Indeed, denoting by $z\colon[0,1]\into\CC$ the inclusion map and setting
 \[
 a:=\begin{pmatrix}1&z\\ 0&0\end{pmatrix}\quad\text{and}\quad
 b:=\begin{pmatrix}0&0\\ 0&1\end{pmatrix}\,,
 \]
then~$a$ and~$b$ are both idempotent and $ab=\begin{pmatrix}0&z\\ 0&0\end{pmatrix}$ is nonzero. However, we prove in Section~\ref{S:C*algLift} that $\rV(\DD)\cong\ZZ^+\times\ZZ^+$, \emph{via} an isomorphism that is easily seen to send~$[a]$ to $(1,0)$ and~$b$ to~$(0,1)$. In particular, $[a]\perp[b]$ in~$\rV(\DD)$. Therefore, $\DD$ is not V-semiprimitive.
\end{example}

\begin{lemma}\label{L:Bracklift}
Let $R$ be a V-semiprimitive ring, let~$M$ be a conical \cm, and let $\mu\colon\Idp_\infty(R)\to M$ be a measure. Let $a,b,e\in\Idp_\infty(R)$ with $a\leq e$ and $b\leq e$ while $\mu(a)\perp\mu(e-b)$ and $\mu(b)\perp\mu(e-a)$ in~$M$. Then $a=b$.
\end{lemma}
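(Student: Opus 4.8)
The plan is to reduce the equality $a=b$ to the two idempotent identities $a=ab$ and $b=ab$, and to extract each of these from the V-semiprimitivity of~$R$. Since $a\leq e$ and $b\leq e$ in the order on idempotents, the differences $e-a$ and $e-b$ are again idempotents (so that $\mu(e-a)$ and $\mu(e-b)$ make sense), and one has $ae=ea=a$ and $be=eb=b$. Consequently $a(e-b)=a-ab$ and $(e-a)b=b-ab$, so it suffices to show that these two products vanish: from $a(e-b)=0$ we read off $a=ab$, from $(e-a)b=0$ we read off $b=ab$, and therefore $a=ab=b$.

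To apply V-semiprimitivity (Definition~\ref{D:Vsemi}) I need orthogonality of the relevant classes in~$\rV(R)$, whereas the hypotheses only supply orthogonality of their images in~$M$. The crux of the argument — and the only place where the full strength of ``$\mu$ is a \emph{measure}'' rather than a mere premeasure is used — is the following reflection principle: for all $x,y\in\Idp_\infty(R)$, if $\mu(x)\perp\mu(y)$ in~$M$, then $[x]_R\perp[y]_R$ in~$\rV(R)$. I would prove it through the canonical factorization $\mu=\ol{\mu}\circ\tau_R$ of Proposition~\ref{P:FactMeas}, where $\ol{\mu}\colon\rV(R)\to M$ is a monoid homomorphism; because $\mu$ is a measure, Proposition~\ref{P:FactMeas}(i) gives $\ol{\mu}^{-1}\set{0}=\set{0}$. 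Given $\delta\in\rV(R)$ and $n\in\NN$ with $\delta\leq n[x]_R$ and $\delta\leq n[y]_R$, applying the order-preserving homomorphism~$\ol{\mu}$ yields $\ol{\mu}(\delta)\leq n\mu(x)$ and $\ol{\mu}(\delta)\leq n\mu(y)$; orthogonality of $\mu(x)$ and $\mu(y)$ in~$M$ then forces $\ol{\mu}(\delta)=0$, and the zero-reflecting property gives $\delta=0$. By Definition~\ref{D:perp} this says exactly $[x]_R\perp[y]_R$.

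With the reflection principle in hand, the hypotheses $\mu(a)\perp\mu(e-b)$ and $\mu(b)\perp\mu(e-a)$ translate (orthogonality being symmetric) into $[a]_R\perp[e-b]_R$ and $[e-a]_R\perp[b]_R$ in~$\rV(R)$. V-semiprimitivity then delivers $a(e-b)=0$ and $(e-a)b=0$ directly, and the computation of the first paragraph closes the argument. I expect the reflection principle to be the one genuinely conceptual step; everything else is the bookkeeping of the subidempotent relation~$\leq$ and a direct appeal to the defining property of V-semiprimitive rings.
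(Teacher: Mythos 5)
Your proof is correct and follows essentially the same route as the paper's: both factor $\mu=\ol{\mu}\circ\tau_R$ through Proposition~\ref{P:FactMeas}, use the measure property ($\ol{\mu}^{-1}\set{0}=\set{0}$) to pull orthogonality in~$M$ back to orthogonality in~$\rV(R)$, and then invoke V-semiprimitivity together with the idempotent arithmetic $a(e-b)=a-ab$, $(e-a)b=b-ab$ to conclude $a=ab=b$. If anything, your treatment of the multiplier~$n$ in Definition~\ref{D:perp} is slightly more careful than the paper's, which phrases the non-orthogonality witness without it.
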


\begin{proof}
Denote by $\ol{\mu}\colon\rV(R)\to M$ the unique \Vhom\ such that $\mu(x)=\ol{\mu}([x])$ for each $x\in\Idp_\infty(R)$ (cf. Proposition~\ref{P:FactMeas}). We claim that $[a]\perp[e-b]$ in~$\rV(R)$. Indeed, otherwise there would exist a nonzero $x\in\Idp_\infty(R)$ such that $[x]\leq[a]$ and $[x]\leq[e-b]$. By applying~$\ol{\mu}$ to those inequalities, we obtain $\mu(x)\leq\mu(a)$ and $\mu(x)\leq\mu(e-b)$, a contradiction as $\mu(x)\neq0$ and $\mu(a)\perp\mu(e-b)$, and thus proving our claim. As~$R$ is V-semiprimitive, it follows that $a(e-b)=(e-b)a=0$, thus, as $a\leq e$, we get that $a=ab=ba$. By symmetry, $a=ab=ba=b$.
\end{proof}

\section{More unliftable diagrams of simplicial monoids}\label{S:Unlift}

The main combinatorial object underlying the present paper is a commutative diagram, which we shall denote by~$\vD$, of simplicial monoids and monoid homomorphisms. This diagram is indexed by the \emph{cube}, that is, the partially ordered set of all subsets of a three-element set. Its vertices are the \cm s $\bA=\bB=\ZZ^+$ and $\bA_j=\bB_j=\ZZ^+\times\ZZ^+$ ($j\in\set{0,1,2}$).
The monoid homomorphisms labeling its edges are the identity map, together with the maps $\be\colon\ZZ^+\to\ZZ^+\times\ZZ^+$,
$\bs\colon\ZZ^+\times\ZZ^+\to\ZZ^+\times\ZZ^+$, and $\bh\colon\ZZ^+\times\ZZ^+\to\ZZ^+$ defined by $\be(x):=(x,x)$, $\bs(x,y):=(y,x)$, and $\bh(x,y):=x+y$ for all $x,y\in\ZZ^+$. (The maps~$\bs$ and~$\bh$ were already introduced in Section~\ref{S:FirstDiagr}.)

We shall later consider the diagram, denoted by~$\vDu$, obtained by associating order-units to the vertices of the diagram~$\vD$: $1$ is associated to~$\bA$, $(1,1)$ is associated to both~$\bA_j$ and~$\bB_j$ for each $j\in\set{0,1,2}$, and~$2$ is associated to~$\bB$. Observe that~$\vDu$ is a commutative diagram of \emph{pointed monoids}, which means here that $\be(1)=(1,1)=\bs(1,1)$ and $\bh(1,1)=2$.

The diagrams~$\vD$ and~$\vDu$ are represented on the left- and right-hand side of Figure~\ref{Fig:DiagrD}, respectively.

\begin{figure}[htb]
 \[
\xymatrixrowsep{2pc}\xymatrixcolsep{1.5pc}
\def\labelstyle{\displaystyle}
\xymatrix{
& \bB & & & (\bB,2) &\\
\bB_2\ar[ru]^{\bh} & \bB_1\ar[u]_{\bh} &
\bB_0\ar[lu]_{\bh} & (\bB_2,(1,1))\ar[ru]^{\bh} &
(\bB_1,(1,1))\ar[u]_{\bh} & (\bB_0,(1,1))\ar[lu]_{\bh}\\
\bA_0\ar@{=}[u]\ar@{=}[ur] &
\bA_1\ar[ul]_(.7){\bs}\ar@{=}[ur] &
\bA_2\ar@{=}[ul]\ar@{=}[u] & (\bA_0,(1,1))\ar@{=}[u]\ar@{=}[ur] &
(\bA_1,(1,1))\ar[ul]_(.65){\bs}\ar@{=}[ur] &
(\bA_2,(1,1))\ar@{=}[ul]\ar@{=}[u]\\
& \bA\ar[lu]^{\be}\ar[u]_{\be}\ar[ru]_{\be} & & &
(\bA,1)\ar[lu]^{\be}\ar[u]_{\be}\ar[ru]_{\be} &
}
 \]
\caption{The diagrams~$\vD$ and $\vDu$}
\label{Fig:DiagrD}
\end{figure}

The following theorem will involve the functor $\forg\colon\Ring^{\pmeas}\to\CM$ introduced in Definition~\ref{D:TwoFunct}, together with V-semiprimitivity (cf. Definition~\ref{D:Vsemi}).

\begin{theorem}\label{T:NoLift}
There exists no lifting of~$\vD$, with respect to the functor~$\forg$, by any commutative diagram of premeasured rings satisfying the following conditions:
\begin{enumerate}
\item the lifts of~$\bA_0$, $\bA_1$, $\bA_2$ are surjective V-premeasures;

\item the lifts of~$\bB_0$, $\bB_1$, $\bB_2$ are surjective V-measures;

\item the underlying rings of the lifts of~$\bB_0$, $\bB_1$, $\bB_2$ are V-semiprimitive.
\end{enumerate}
\end{theorem}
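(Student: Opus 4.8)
The plan is to assume such a lifting exists and extract a contradiction from the single ``twist'' $\bs$ sitting on the edge $\bA_1\to\bB_2$, which produces a nontrivial holonomy around the three coatoms. Write the lift of $\bA$ as $(R,\mu)$, of each $\bA_j$ as $(A_j,\alpha_j)$, of each $\bB_j$ as $(B_j,\beta_j)$, and of $\bB$ as $(S,\nu)$, and let $e_j\colon R\to A_j$, $g_{jk}\colon A_j\to B_k$, $h_k\colon B_k\to S$ be the ring homomorphisms lifting the corresponding edges; by the definition of a morphism of premeasured rings each $g_{jk}$ carries $\alpha_j$ to $\beta_k$ along the identity, \emph{except} $g_{1,2}$, which carries it along $\bs$, and each $h_k$ carries $\beta_k$ to $\nu$ along $\bh$. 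First I fix an idempotent $r\in\Idp_\infty(R)$ realizing the generator, $\mu(r)=1$, and set $c_j:=e_j(r)$, so that $\alpha_j(c_j)=\be(1)=(1,1)$; as $\alpha_j$ is a surjective V-premeasure I may, by condition~(M3), split $c_j=a_j+b_j$ into orthogonal idempotents with $\alpha_j(a_j)=(1,0)$ and $\alpha_j(b_j)=(0,1)$.

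Next I exploit commutativity to create, in each coatom, a single parent shared by both atoms below it: the bottom faces of the cube give $g_{0,1}(c_0)=g_{2,1}(c_2)=:d_1$ in $B_1$, and likewise $d_0\in B_0$, $d_2\in B_2$, each of $\beta$-premeasure $(1,1)$. Here is where rigidity enters. Each $\beta_k$ is a V-measure and $B_k$ is V-semiprimitive, so Lemma~\ref{L:Bracklift} applies below $d_k$: since $(1,0)\perp(0,1)$ in $\ZZ^+\times\ZZ^+$, the $(1,0)$-part of $d_k$ is \emph{unique}, whence the parts coming from the two atoms below $B_k$ must coincide. Straightening out the $B_1$ and $B_0$ faces this reads $g_{0,1}(a_0)=g_{2,1}(a_2)=:x_1$, $g_{0,1}(b_0)=g_{2,1}(b_2)=:y_1$, and $g_{1,0}(b_1)=g_{2,0}(b_2)=:y_0$; but at $B_2$ the matching is \emph{crossed} by the swap, $g_{0,2}(a_0)=g_{1,2}(b_1)$ and $g_{0,2}(b_0)=g_{1,2}(a_1)$, because $\bs$ sends the $(1,0)$-part of $A_1$ to the $(0,1)$-slot of $B_2$.

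Finally I transport these identifications to the top and read off the holonomy. Using the commutativity of the three top faces, namely $h_1 g_{0,1}=h_2 g_{0,2}$, $h_2 g_{1,2}=h_0 g_{1,0}$ and $h_0 g_{2,0}=h_1 g_{2,1}$, together with the crossed identification at $B_2$, the chain
\[
h_1(x_1)=h_2\bigl(g_{0,2}(a_0)\bigr)=h_2\bigl(g_{1,2}(b_1)\bigr)=h_0\bigl(g_{1,0}(b_1)\bigr)=h_0(y_0)=h_1(y_1)
\]
closes up. Since $x_1\perp y_1$ in $B_1$ (they are $g_{0,1}$-images of the orthogonal pair $a_0,b_0$), the idempotent $u:=h_1(x_1)=h_1(y_1)$ satisfies $u^2=h_1(x_1)h_1(y_1)=0$, hence $u=0$, whereas $\nu(u)=\bh(\beta_1(x_1))=\bh(1,0)=1$, the desired contradiction. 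The decisive point is that an \emph{odd} number of swaps is met around the triangle of coatoms: the two ``straight'' faces keep the chain on matching premeasure-components, while the lone $\bs$ at $g_{1,2}$ returns it to $B_1$ on the \emph{orthogonal} component. I expect the main obstacle to be the coatom synchronization of the second paragraph --- manufacturing genuine common parents $d_k$ so that Lemma~\ref{L:Bracklift} forces the cross-atom identifications --- for which the single bottom idempotent $r$, propagated along $\be$ through the commuting lower faces, is exactly what knits the three otherwise independent atoms together; the delicate part is checking that the swap closes the holonomy on the orthogonal slot rather than harmlessly on the same one.
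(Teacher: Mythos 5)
Your proposal is correct and follows essentially the same route as the paper's own proof: propagate a single idempotent from the bottom of the cube, split it via the V-premeasure condition, use Lemma~\ref{L:Bracklift} (V-measure plus V-semiprimitivity) to force the unique-part identifications at each coatom --- crossed at $B_2$ by the swap $\bs$ --- and chase the three top faces to close the holonomy on the wrong component. The only cosmetic difference is the endgame: you conclude from orthogonality (the common image of $x_1\perp y_1$ is an idempotent with square zero, hence $0$, contradicting premeasure value $1$), whereas the paper derives $h_0(y_0)=v-h_0(y_0)$ and contradicts idempotency of $v\neq 0$.
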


\begin{proof}
Suppose that we are given a lifting~$\vR$, with respect to the functor~$\forg$, of the diagram~$\vD$, labeled as on Figure~\ref{Fig:RingLift} (\emph{we remind the reader that~$\tilde{f}$ is now dropped from the notation $(f,\tilde{f})$ for morphisms in $\Ring^{\pmeas}$}). We further assume that the conditions (i)--(iii) above hold.

\begin{figure}[htb]
 \[
\xymatrixrowsep{2pc}\xymatrixcolsep{1.5pc}
\def\labelstyle{\displaystyle}
\xymatrix{
& (B,\beta) &\\
(B_2,\beta_2)\ar[ur]^{h_2} & (B_1,\beta_1)\ar[u]|-{h_1} &
(B_0,\beta_0)\ar[lu]_{h_0}\\
& & \\
(A_0,\alpha_0)\ar[uu]^{f_2}\ar[uur]^(.75){f_1} &
(A_1,\alpha_1)\ar[uul]|-(.72){\tvi g_2}\ar[uur]|-(.72){\tvi f_0} &
(A_2,\alpha_2)\ar[uul]_(.75){g_1}\ar[uu]_(.7){g_0}\\
& (A,\alpha)\ar[lu]^{e_0}\ar[u]_{e_1}\ar[ru]_{e_2} &
}
 \]
\caption{A commutative diagram~$\vR$ of premeasured rings lifting~$\vD$}
\label{Fig:RingLift}
\end{figure}

By composing the premeasures in the diagram~$\vR$ with the arrows of the natural equivalence given by the relation $\forg\vR\cong\vD$, we may assume that $\forg\vR=\vD$. This means that the following equalities hold for each $j\in\set{0,1,2}$: $M_{\alpha}=M_{\beta}=\ZZ^+$, $M_{\alpha_j}=M_{\beta_j}=\ZZ^+\times\ZZ^+$, $\tilde{e}_j=\be$, $\tilde{h}_j=\bh$, $\tilde{g}_2=\bs$, and $\tilde{f}_0=\tilde{f}_1=\tilde{f}_2=\tilde{g}_0=\tilde{g}_1=\id_{\ZZ^+\times\ZZ^+}$.

There exists $u\in\Idp_\infty(A)$ such that $\alpha(u)=1$. Set $u_j:=e_j(u)\in\Idp_\infty(A_j)$, for each $j\in\set{0,1,2}$. Then $\alpha_j(u_j)=\tilde{e}_j\alpha(u)=\be(1)=(1,1)$. Likewise, ``propagating'' those new idempotents up the diagram~$\vR$, we obtain idempotent matrices $v_j\in\Idp_\infty(B_j)$, for $j\in\set{0,1,2}$, and $v\in\Idp_\infty(B)$ such that $v_2=f_2(u_0)=g_2(u_1)$ and cyclically, $v=h_2(v_2)=h_1(v_1)=h_0(v_0)$, $\beta_2(v_2)=\beta_1(v_1)=\beta_0(v_0)=(1,1)$, and $\beta(v)=2$.

Define new matrices~$x_j\in\Idp_\infty(A_j)$, for $j\in\set{0,1,2}$, as follows. As $\alpha_j(u_j)=(1,0)+(0,1)$ in~$\bA_j$ and~$\alpha_j$ is a V-premeasure, there exists~$x_j\in\Idp_\infty(A_j)$ such that $x_j\leq u_j$, $\alpha_j(x_j)=(1,0)$, and $\alpha_j(u_j-x_j)=(0,1)$. Set
 \[
 \begin{cases}
 y_0&:=f_0(x_1)\\
 z_0&:=g_0(x_2)
 \end{cases}\,,\quad
 \begin{cases}
 y_1&:=f_1(x_0)\\
 z_1&:=g_1(x_2)
 \end{cases}\,,\quad
 \begin{cases}
 y_2&:=f_2(x_0)\\
 z_2&:=g_2(u_1-x_1)\,.
 \end{cases}
 \]
Then $y_j$ and $z_j$ both belong to $\Idp_\infty(B_j)$ and $y_j,z_j\leq v_j$, for each $j\in\set{0,1,2}$. Further calculations yield easily 
 \begin{multline}\label{Eq:yiapproeqzi}
 \beta_j(y_j)=\beta_j(z_j)=(1,0)\text{ and }
 \beta_j(v_j-y_j)=\beta_j(v_j-z_j)=(0,1)\,,\\
 \text{for each }j\in\set{0,1,2}\,.
 \end{multline}
For example,
 \[
 \beta_2(y_2)=\beta_2(f_2(x_0))=\tilde{f}_2\alpha_0(x_0)
 =\tilde{f}_2(1,0)=(1,0)
 \]
while
 \[
 \beta_2(v_2-z_2)=\beta_2(g_2(x_1))=\tilde{g}_2\alpha_1(x_1)
 =\bs(1,0)=(0,1)\,.
 \]
By evaluating the equation $h_1f_1=h_2f_2$ at~$x_0$, we obtain $h_1(y_1)=h_2(y_2)$. By evaluating $h_0f_0=h_2g_2$ at~$x_1$, we obtain $h_0(y_0)=h_2(v_2-z_2)$. By evaluating $h_0g_0=h_1g_1$ at~$x_2$, we obtain $h_0(z_0)=h_1(z_1)$.

Now, as each~$B_j$ is V-semiprimitive and by~\eqref{Eq:yiapproeqzi}, it follows from Lemma~\ref{L:Bracklift} that $y_j=z_j$ for each $j\in\set{0,1,2}$. Therefore,
 \[
 h_0(y_0)=v-h_2(y_2)=v-h_1(y_1)=v-h_0(y_0)\,,
 \]
a contradiction as~$h_0(y_0)$ and~$v$ are both idempotent with $h_0(y_0)\leq v$ and $v\neq0$.
\end{proof}

We observed in Remark~\ref{Rk:picirctau} that any lifting of a diagram with respect to the functor~$\rV$ yields a lifting of the same diagram with respect to the functor~$\forg$. As the canonical measure on a ring is a V-measure (cf. Example~\ref{Ex:CanVMeas}), we obtain the following immediate consequence of Theorem~\ref{T:NoLift}.

\begin{corollary}\label{C:NoLift1}
There exists no lifting of~$\vD$, with respect to the functor~$\rV$, by any commutative diagram of rings and ring homomorphisms in which the lifts of~$\bB_0$, $\bB_1$ and~$\bB_2$ are all V-semiprimitive.
\end{corollary}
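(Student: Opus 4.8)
The plan is to deduce this directly from Theorem~\ref{T:NoLift} by transporting a hypothetical lifting from rings to premeasured rings via the canonical functor~$\tau$. First I would argue by contradiction: suppose there is a commutative diagram~$\vR$ of rings and ring homomorphisms that lifts~$\vD$ with respect to~$\rV$ (so $\rV\vR\cong\vD$), and in which the rings lifting~$\bB_0$, $\bB_1$, $\bB_2$ are all V-semiprimitive.

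Next I would apply the canonical functor $\tau\colon\Ring\to\Ring^{\pmeas}$ of Definition~\ref{D:TwoFunct} to~$\vR$. Since~$\tau$ is a functor, $\tau\vR$ is again a commutative diagram of premeasured rings (the composition relations among the arrows of~$\vR$ are preserved). By Remark~\ref{Rk:picirctau} we have $\rV=\forg\circ\tau$, so $\forg(\tau\vR)=\rV\vR\cong\vD$; that is, $\tau\vR$ lifts~$\vD$ with respect to the functor~$\forg$.

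It then remains to check that $\tau\vR$ satisfies hypotheses (i)--(iii) of Theorem~\ref{T:NoLift}. Every vertex of~$\tau\vR$ carries a canonical measure $\tau_R\colon\Idp_\infty(R)\to\rV(R)$, which by Example~\ref{Ex:CanVMeas} is in fact a V-measure, hence in particular a V-premeasure; moreover $\tau_R$ is surjective, since $\rV(R)$ is by its very definition the set of classes~$[a]_R$ with $a\in\Idp_\infty(R)$. This gives conditions (i) and (ii) at once. Finally, the underlying rings of the lifts of~$\bB_0$, $\bB_1$, $\bB_2$ in~$\tau\vR$ are exactly the rings lifting~$\bB_0$, $\bB_1$, $\bB_2$ in~$\vR$, which are V-semiprimitive by assumption, giving~(iii).

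Thus $\tau\vR$ would be a lifting of~$\vD$ with respect to~$\forg$ satisfying all three conditions of Theorem~\ref{T:NoLift}, contradicting that theorem. I do not expect any genuine obstacle here: the corollary is a formal consequence of the theorem, and the only points requiring (routine) verification are that the canonical measure is surjective and is a V-measure --- both already recorded in Example~\ref{Ex:CanVMeas} and in the definition of~$\rV$ --- together with the functoriality identity $\rV=\forg\circ\tau$ from Remark~\ref{Rk:picirctau}.
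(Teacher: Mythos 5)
Your proposal is correct and follows exactly the route the paper takes: the paper derives Corollary~\ref{C:NoLift1} as an immediate consequence of Theorem~\ref{T:NoLift} by invoking Remark~\ref{Rk:picirctau} (i.e., applying~$\tau$ and using $\rV=\forg\circ\tau$) together with Example~\ref{Ex:CanVMeas} (the canonical measure is a surjective V-measure), so that hypotheses (i)--(iii) hold with (iii) supplied by the V-semiprimitivity assumption. Your write-up simply spells out the same verification in more detail.
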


\begin{corollary}\label{C:NoRegLift}
There exists no lifting of~$\vD$, with respect to the functor~$\forg$, by any commutative diagram of V-premeasured regular rings.
\end{corollary}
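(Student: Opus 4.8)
The plan is to deduce Corollary~\ref{C:NoRegLift} from Theorem~\ref{T:NoLift} by showing that the class of diagrams forbidden by the theorem subsumes the class mentioned in the corollary. The corollary concerns liftings by V-premeasured \emph{regular} rings, while the theorem concerns liftings by premeasured rings satisfying the three bulleted conditions (i)--(iii). So the whole task is to verify that a V-premeasured regular diagram automatically satisfies those three conditions, after possibly adjusting it so that its measures become \emph{measures} at the appropriate vertices.

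First I would recall that, by Proposition~\ref{P:RegSemipr}, every regular ring is semiprimitive, and that every regular ring is an exchange ring (stated in Section~\ref{S:BasicExch}); hence by Proposition~\ref{P:SeExVsem} every regular ring is V-semiprimitive. This immediately furnishes condition~(iii): the underlying rings of the lifts of~$\bB_0,\bB_1,\bB_2$ are V-semiprimitive. The genuine work lies in conditions~(i) and~(ii), namely arranging that the lifts of~$\bA_0,\bA_1,\bA_2$ are \emph{surjective} V-premeasures and that the lifts of~$\bB_0,\bB_1,\bB_2$ are \emph{surjective} V-measures. Surjectivity of each premeasure onto its codomain (which is $\ZZ^+$ or $\ZZ^+\times\ZZ^+$, a simplicial monoid) can be assumed after replacing each codomain by the image submonoid; since the targets are finitely generated and the relevant generators $(1,0),(0,1)$ lie in the range once we produce the idempotents realizing them, surjectivity is harmless. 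The V-premeasure condition on the $\bA_j$-lifts and the measure-plus-V condition on the $\bB_j$-lifts are what must be manufactured.

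The key device is the reflector of Lemma~\ref{L:ReflSubcat}. A V-premeasured regular ring is in particular a V-premeasured exchange ring, so I would apply the reflector $\gQr\colon\Exch^{\pmeas}\to\Exch^{\meas}$ to every vertex of the given diagram~$\vR$. By Lemma~\ref{L:ReflSubcat}, $\gQr$ satisfies $\forg\circ\gQr=\forg$, so applying it componentwise preserves the relation $\forg\vR\cong\vD$ and hence yields a new diagram still lifting~$\vD$ with respect to~$\forg$. Moreover, the three listed properties of~$\gQr$ do exactly what is needed: part~(i) sends surjective premeasures to surjective measures, part~(ii) sends V-premeasures to V-measures, and part~(iii) sends regular rings to regular rings (so V-semiprimitivity is retained). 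Thus after reflection the lifts of~$\bB_0,\bB_1,\bB_2$ become surjective V-\emph{measures} on regular (hence V-semiprimitive) rings, giving conditions~(ii) and~(iii); the lifts of~$\bA_0,\bA_1,\bA_2$ remain surjective V-premeasures, giving condition~(i). With all three hypotheses of Theorem~\ref{T:NoLift} satisfied, that theorem yields a contradiction, so no such lifting exists.

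The main obstacle I anticipate is bookkeeping rather than conceptual: one must check that applying $\gQr$ componentwise is legitimate for a \emph{commutative diagram}, i.e.\ that $\gQr$, being a functor, carries the commuting square relations of~$\vR$ to commuting relations in the reflected diagram, and that the natural equivalence $\forg\vR\cong\vD$ transports along the equality $\forg\circ\gQr=\forg$ to a natural equivalence $\forg(\gQr\vR)\cong\vD$. Functoriality of $\gQr$ handles commutativity automatically, and the identity $\forg\circ\gQr=\forg$ makes the transport of the natural equivalence immediate, so no essentially new argument is required. The only subtlety to state carefully is that the hypotheses of the corollary already guarantee the $\bA_j$-lifts are V-premeasures and the $\bB_j$-lifts are V-premeasures (here V-measures could be weaker, but being a V-premeasured regular diagram we have at least V-premeasures everywhere), so that parts~(i) and~(ii) of Lemma~\ref{L:ReflSubcat} apply; once that is noted, the deduction is a direct invocation of Theorem~\ref{T:NoLift}.
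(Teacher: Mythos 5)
Your core argument coincides with the paper's proof of Corollary~\ref{C:NoRegLift}: apply the reflector $\gQr$ of Lemma~\ref{L:ReflSubcat} componentwise to the given diagram~$\vR$ (legitimate, since every regular ring is an exchange ring), use $\forg\circ\gQr=\forg$ to keep $\forg(\gQr\vR)=\forg\vR\cong\vD$, use items (ii) and (iii) of that lemma to obtain a commutative diagram of V-\emph{measured} regular rings, deduce V-semiprimitivity from Propositions~\ref{P:RegSemipr} and~\ref{P:SeExVsem}, and contradict Theorem~\ref{T:NoLift}. Functoriality of~$\gQr$ does take care of commutativity, exactly as you say.

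The one place where you go beyond the paper is the surjectivity discussion, and there your patch does not work as written. Replacing each codomain $M_{\mu_p}$ by the image submonoid $\mu_p(\Idp_\infty(R_p))$ destroys the very hypothesis you are using: $\forg$ of the modified diagram is the diagram of images, which need not be naturally equivalent to~$\vD$, because nothing in the corollary's hypotheses forces the images to be all of~$\ZZ^+$ or $(\ZZ^+)^2$ (for instance, the diagram whose vertices are zero rings equipped with the zero premeasures into the monoids of~$\vD$ is a commutative diagram of V-premeasured regular rings with $\forg\vR=\vD$, and there every image is~$\set{0}$). The companion claim that $(1,0)$ and $(0,1)$ ``lie in the range once we produce the idempotents realizing them'' is circular: the existence of such idempotents is precisely what surjectivity is needed for. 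To be fair, this mismatch is inherited from the paper itself: Theorem~\ref{T:NoLift} hypothesizes surjectivity at the $\bA_j$ and $\bB_j$ vertices (and its proof moreover picks $u\in\Idp_\infty(A)$ with $\alpha(u)=1$ at the bottom vertex, a nondegeneracy its stated hypotheses do not even guarantee), while the paper's proof of Corollary~\ref{C:NoRegLift} invokes the theorem without comment on this point. The clean repair is to read a surjectivity hypothesis into the corollary --- all of its later uses supply it, e.g.\ via Remark~\ref{Rk:picirctau} the canonical measures $\tau_R$ are surjective by definition of~$\rV$, and the diagram produced in the proof of Theorem~\ref{T:CXAKRR0} consists of surjective V-measures --- rather than to shrink the codomains.
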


\begin{proof}
Suppose otherwise, and let~$\vR$ be a commutative diagram of V-premeasured regular rings such that $\forg\vR\cong\vD$. By Lemma~\ref{L:ReflSubcat}, the diagram~$\vR':=\gQr\vR$ is a commutative diagram of \emph{V-measured} regular rings and $\forg\vR'=\forg\vR\cong\vD$. But every regular ring is a semiprimitive exchange ring (cf. Proposition~\ref{P:RegSemipr}), thus it is V-semiprimitive (cf. Proposition~\ref{P:SeExVsem}). This contradicts Theorem~\ref{T:NoLift}.
\end{proof}

A similar proof, using Lemma~\ref{L:ReflSubRR0} instead of Lemma~\ref{L:ReflSubcat}, yields the following.

\begin{corollary}\label{C:NoRR0Lift}
There exists no lifting of~$\vD$, with respect to the functor~$\forg$, by any commutative diagram of V-premeasured C*-algebras of real rank~$0$.
\end{corollary}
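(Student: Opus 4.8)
The plan is to reproduce, essentially verbatim, the argument of Corollary~\ref{C:NoRegLift}, replacing the exchange-ring reflector~$\gQr$ by the real-rank-$0$ reflector~$\gQa$ furnished by Lemma~\ref{L:ReflSubRR0}, and replacing the implication ``regular $\Rightarrow$ semiprimitive exchange'' by the corresponding chain of implications for C*-algebras of real rank~$0$.

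First I would argue by contradiction: suppose that~$\vR$ is a commutative diagram of V-premeasured C*-algebras of real rank~$0$ with $\forg\vR\cong\vD$. Applying the reflector $\gQa\colon\xRR^{\pmeas}\to\xRR^{\meas}$ of Lemma~\ref{L:ReflSubRR0} to~$\vR$, I would set $\vR':=\gQa\vR$. Since~$\gQa$ is a functor and $\forg\circ\gQa=\forg$, the diagram~$\vR'$ is again a commutative diagram whose underlying rings are still C*-algebras of real rank~$0$, and $\forg\vR'=\forg\vR\cong\vD$. By construction the lifts of~$\bB_0$, $\bB_1$, $\bB_2$ are now V-measures rather than merely V-premeasures, while the lifts of~$\bA_0$, $\bA_1$, $\bA_2$ remain V-premeasures; moreover items (i) and (ii) of Lemma~\ref{L:ReflSubRR0} guarantee that~$\gQa$ preserves surjectivity and the V-property, so that~$\vR'$ realizes the surjective V-premeasures and surjective V-measures required by conditions~(i) and~(ii) of Theorem~\ref{T:NoLift}.

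It then remains to verify condition~(iii) of Theorem~\ref{T:NoLift}, namely that the underlying rings of the lifts of~$\bB_0$, $\bB_1$, $\bB_2$ are V-semiprimitive. Here each such ring is a C*-algebra of real rank~$0$, hence an exchange ring by Proposition~\ref{P:RR0Exch} and semiprimitive by Proposition~\ref{P:C*Semipr}; being a semiprimitive exchange ring, it is V-semiprimitive by Proposition~\ref{P:SeExVsem}. Thus~$\vR'$ satisfies all three hypotheses of Theorem~\ref{T:NoLift}, contradicting that theorem and completing the proof.

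The only delicate point, and the place where the C*-algebraic case genuinely diverges from the regular case, is the passage through the reflector: one must know that~$\gQa$ keeps us inside the category of real-rank-$0$ C*-algebras, so that Proposition~\ref{P:RR0Exch} still applies to the reflected diagram~$\vR'$. This is exactly the content of Lemma~\ref{L:ReflSubRR0}, whose proof quotients by the \emph{topological closure}~$\ol{I}$ of the relevant ideal (rather than by~$I$ itself) precisely so that $R/\ol{I}$ remains a C*-algebra of real rank~$0$. Granting that lemma, no further C*-algebraic computation is needed and the remainder of the argument is purely formal.
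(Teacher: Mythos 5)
Your proposal is correct and is exactly the paper's own argument: the paper proves Corollary~\ref{C:NoRegLift} by applying the reflector and invoking Theorem~\ref{T:NoLift}, and then obtains Corollary~\ref{C:NoRR0Lift} by ``a similar proof, using Lemma~\ref{L:ReflSubRR0} instead of Lemma~\ref{L:ReflSubcat}'', which is precisely what you carry out, including the chain real rank~$0$ $\Rightarrow$ exchange (Proposition~\ref{P:RR0Exch}), C*-algebra $\Rightarrow$ semiprimitive (Proposition~\ref{P:C*Semipr}), hence V-semiprimitive (Proposition~\ref{P:SeExVsem}). Your closing observation about quotienting by the topological closure~$\ol{I}$ so as to stay inside~$\xRR$ is also the very point the paper highlights in the proof of Lemma~\ref{L:ReflSubRR0}.
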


By using Remark~\ref{Rk:picirctau}, we obtain the following negative lifting results with respect to the functor~$\rV$.

\begin{corollary}\label{C:NoVLift}
There is no lifting, with respect to the functor~$\rV$, of the diagram~$\vD$ by any diagram of regular rings \pup{resp., C*-algebras of real rank~$0$}.
\end{corollary}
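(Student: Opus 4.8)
The plan is to derive this statement immediately from the two preceding corollaries by passing through the canonical measure, exactly as the earlier corollaries were derived from Theorem~\ref{T:NoLift}. The key observation is Remark~\ref{Rk:picirctau}, which records the factorization $\rV=\forg\circ\tau$ and its consequence that any $\rV$-lifting of a diagram yields a $\forg$-lifting of the same diagram after applying the canonical functor~$\tau$.

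Concretely, I would argue by contradiction. Suppose $\vD$ admits a lifting~$\vR$, with respect to~$\rV$, by a commutative diagram of regular rings, so that $\rV\vR\cong\vD$. Applying the canonical functor $\tau\colon\Ring\to\Ring^{\pmeas}$ (Definition~\ref{D:TwoFunct}) objectwise and morphismwise, I obtain a commutative diagram $\tau\vR$ of premeasured rings. Because $\rV=\forg\circ\tau$, we get $\forg(\tau\vR)=\rV\vR\cong\vD$, so $\tau\vR$ is a lifting of~$\vD$ with respect to~$\forg$. Each object of $\tau\vR$ has the form $(R,\tau_R)$ with~$R$ regular, and by Example~\ref{Ex:CanVMeas} the canonical measure~$\tau_R$ is a V-measure, hence in particular a V-premeasure. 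Thus $\tau\vR$ is a commutative diagram of \emph{V-premeasured regular rings} lifting~$\vD$ with respect to~$\forg$, which directly contradicts Corollary~\ref{C:NoRegLift}.

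For the parenthetical C*-algebra case the argument is verbatim the same, with Corollary~\ref{C:NoRR0Lift} in place of Corollary~\ref{C:NoRegLift}: a $\rV$-lifting of~$\vD$ by C*-algebras of real rank~$0$ would, after applying~$\tau$, produce a $\forg$-lifting of~$\vD$ by V-premeasured C*-algebras of real rank~$0$, again a contradiction.

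I do not expect any genuine obstacle here; the statement is a formal consequence of the already-established machinery. The only point that needs to be invoked, rather than checked afresh, is that the canonical measure~$\tau_R$ attached to a ring by~$\tau$ is automatically a V-premeasure, which is precisely the content of Example~\ref{Ex:CanVMeas} and is what lets the regular (resp.\ real rank~$0$) diagram~$\vR$ be promoted to a \emph{V-premeasured} diagram to which Corollary~\ref{C:NoRegLift} (resp.\ Corollary~\ref{C:NoRR0Lift}) applies.
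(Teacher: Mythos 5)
Your proposal is correct and is exactly the paper's argument: the paper derives Corollary~\ref{C:NoVLift} from Corollaries~\ref{C:NoRegLift} and~\ref{C:NoRR0Lift} precisely via Remark~\ref{Rk:picirctau} (the factorization $\rV=\forg\circ\tau$) together with the fact, from Example~\ref{Ex:CanVMeas}, that the canonical measure~$\tau_R$ is a V-measure, hence a V-premeasure. Nothing is missing; the contradiction you set up is the intended one.
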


Nonetheless, we shall see that~$\vD$ can be lifted, with respect to the functor~$\rV$, by a diagram of exchange rings (cf. Proposition~\ref{P:cDliftedExch}) and by a diagram of C*-algebras (cf. Proposition~\ref{P:cDliftedC*}).

In order to be able to extend Proposition~\ref{P:NonSplit} to C*-algebras of real rank~$0$, we need to find an analogue of the diagram~$\vD$ in which the arrows are all embeddings. Fortunately, this can be done. We denote by~$\bf,\bg\colon(\ZZ^+)^2\into(\ZZ^+)^4$ and~$\ba$, $\bb$, $\bc\colon(\ZZ^+)^4\into(\ZZ^+)^5$ the maps defined by
 \begin{align*}
 \bf(x,y)&:=(x,x,y,y)\,,\\
 \bg(x,y)&:=(x,y,x,y)\,,\\
 \ba(x_1,x_2,x_4,x_4)&:=(x_1,x_2,x_3,x_4,x_2+x_3)\,,\\
 \bb(x_1,x_2,x_4,x_4)&:=(x_2,x_1,x_3,x_4,x_1+x_4)\,,\\
 \bc(x_1,x_2,x_4,x_4)&:=(x_2,x_3,x_1,x_4,x_1+x_4)\,,
 \end{align*}
for all $x,y,x_1,x_2,x_3,x_4\in\ZZ^+$. Further, we denote by~$\vE$ the commutative diagram of simplicial monoids and normalized monoid order-embeddings represented in Figure~\ref{Fig:DiagrE}.

\begin{figure}[htb]
 \[
\xymatrixrowsep{2pc}\xymatrixcolsep{1.5pc}
\def\labelstyle{\displaystyle}
\xymatrix{
& \bB':=(\ZZ^+)^5 & \\
\tvi\bB'_2:=(\ZZ^+)^4\ar@{_(->}[ru]^{\ba} & \tvi\bB'_1:=(\ZZ^+)^4
\ar@{_(->}[u]_{\bb} &
\tvi\bB'_0:=(\ZZ^+)^4\ar@{^(->}[lu]_{\bc}\\
\bA_0:=(\ZZ^+)^2\ar@{_(->}[u]^{\bf}\ar@{^(->}[ur]_(.75){\bf} &
\bA_1:=(\ZZ^+)^2\ar@{_(->}[ul]_(.6){\bg}\ar@{^(->}[ur]_(.7){\bf} &
\bA_2:=(\ZZ^+)^2\ar@{_(->}[ul]^(.7){\bg}\ar@{^(->}[u]_{\bg}\\
& \bA:=\ZZ^+\ar@{_(->}[lu]^{\be}\ar@{_(->}[u]_{\be}\ar@{^(->}[ru]_{\be} &
}
 \]
\caption{The diagram $\vE$}
\label{Fig:DiagrE}
\end{figure}

\begin{lemma}\label{L:FromEtoD}
There exists a natural transformation $\vec{\chi}\colon\vE\to\vD$ in which all the arrows are surjective pre-\Vhom s.
\end{lemma}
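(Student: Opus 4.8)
The plan is to write down the natural transformation by hand: its components will all be coordinate projections, and the proof then splits into three routine checks — commutativity of the naturality squares, surjectivity of each component, and the pre-\Vhom{} property of each component. A natural transformation $\vec{\chi}\colon\vE\to\vD$ is a family of monoid homomorphisms indexed by the vertices of the cube, making the naturality square commute for every edge; since both diagrams are commutative, it suffices to treat the twelve edges drawn in Figures~\ref{Fig:DiagrE} and~\ref{Fig:DiagrD}.

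First I would fix the components. At the bottom and at the atoms I take the identity, $\chi_{\bA}:=\id_{\ZZ^+}$ and $\chi_{\bA_j}:=\id_{(\ZZ^+)^2}$ for $j\in\set{0,1,2}$; the three bottom squares, whose edges carry~$\be$ in both diagrams, then commute trivially. The coatom components $\chi_{\bB_j}\colon(\ZZ^+)^4\to(\ZZ^+)^2$ are determined by the six middle squares: solving, over~$\ZZ^+$, the equations $\chi_{\bB_0}\circ\bf=\chi_{\bB_0}\circ\bg=\id$, $\chi_{\bB_1}\circ\bf=\chi_{\bB_1}\circ\bg=\id$, $\chi_{\bB_2}\circ\bf=\id$, and $\chi_{\bB_2}\circ\bg=\bs$ yields
\[
\chi_{\bB_0}(x_1,x_2,x_3,x_4)=\chi_{\bB_1}(x_1,x_2,x_3,x_4)=(x_1,x_4)\,,\qquad
\chi_{\bB_2}(x_1,x_2,x_3,x_4)=(x_2,x_3)\,.
\]
The only delicate point here is bookkeeping: the single nonidentity edge~$\bs$ of~$\vD$ (on $\bA_1\to\bB_2$) is exactly what forces $\chi_{\bB_2}$ to read off the ``crossed'' coordinates $(x_2,x_3)$ rather than $(x_1,x_4)$. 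Finally I set $\chi_{\bB}\colon(\ZZ^+)^5\to\ZZ^+$ to be the projection onto the fifth coordinate. The three top squares then commute because $\ba$, $\bb$, $\bc$ were built so that their fifth coordinate records the relevant sum: $\bh\circ\chi_{\bB_2}$ sends $(x_1,x_2,x_3,x_4)$ to $x_2+x_3$, while $\bh\circ\chi_{\bB_1}=\bh\circ\chi_{\bB_0}$ send it to $x_1+x_4$, matching the fifth coordinates of $\ba$, $\bb$, $\bc$ respectively.

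Surjectivity is immediate for each component, since each is either an identity or a projection onto a subset of the coordinates. The one genuinely nonformal point — the heart of the statement — is the pre-\Vhom{} property. For this I would isolate the general fact that every coordinate projection $\pi\colon(\ZZ^+)^n\to(\ZZ^+)^m$ is a pre-\Vhom: given $c\in(\ZZ^+)^n$ and a decomposition $\pi(c)=a+b$ in~$(\ZZ^+)^m$, lift it by letting the projected coordinates of~$x$ and~$y$ be the corresponding coordinates of~$a$ and~$b$ (legitimate, since they sum to the matching coordinate of~$c$) and by pushing each unprojected coordinate of~$c$ entirely into~$x$; then $c=x+y$, $\pi(x)=a$, and $\pi(y)=b$. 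Since every component of~$\vec{\chi}$ is of this form, this single observation finishes the proof.

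The hard part is not any individual step but avoiding a labeling slip: one has to match the edges of~$\vE$ with those of~$\vD$ correctly and track the permutations encoded in $\be,\bf,\bg,\ba,\bb,\bc$ so that all twelve squares close at once. Once the components above are recorded, each commutativity check is the evaluation of a projection on an explicit tuple, and the remaining condition reduces to the one general remark on coordinate projections.
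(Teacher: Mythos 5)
Your proposal is correct and is essentially the paper's own proof: you arrive at exactly the same components (identities on $\bA$ and the $\bA_j$, the projections $(x_1,x_2,x_3,x_4)\mapsto(x_1,x_4)$ on $\bB'_0$ and $\bB'_1$, $(x_1,x_2,x_3,x_4)\mapsto(x_2,x_3)$ on $\bB'_2$, and the fifth-coordinate projection on $\bB'$), and your closing observation that every coordinate projection is a surjective pre-\Vhom\ is precisely the paper's justification that each component, being a canonical projection $M\times N\onto M$, is a surjective pre-\Vhom. The only difference is presentational: you derive the components by solving the naturality equations and spell out the lifting for projections, while the paper simply exhibits them and asserts these routine checks.
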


\begin{proof}
We describe the components of the natural transformation.
\begin{itemize}
\item The morphism $\chi_{\bB}\colon\bB'\onto\bB$ is defined by the rule\newline $\chi_{\bB}(x_1,x_2,x_3,x_4,x_5):=x_5$.

\item The morphism $\chi_{\bB_2}\colon\bB'_2\onto\bB_2$ is defined by the rule \newline $\chi_{\bB_2}(x_1,x_2,x_3,x_4):=(x_2,x_3)$.

\item The morphism $\chi_{\bB_1}\colon\bB'_1\onto\bB_1$ is defined by the rule \newline $\chi_{\bB_1}(x_1,x_2,x_3,x_4):=(x_1,x_4)$.

\item The morphism $\chi_{\bB_0}\colon\bB'_0\onto\bB_0$ is defined by the rule \newline $\chi_{\bB_0}(x_1,x_2,x_3,x_4):=(x_1,x_4)$.

\item The morphism $\chi_{\bA_j}\colon\bA_j\onto\bA_j$ is the identity on~$(\ZZ^+)^2$ for each $j<3$, and $\chi_{\bA}\colon\bA\onto\bA$ is the identity on~$\ZZ^+$.

\end{itemize}

Each of these arrows is the canonical projection from~$M\times N$ onto~$M$, for suitable (simplicial) monoids~$M$ and~$N$; hence it is a surjective pre-\Vhom. Verifying that~$\vec{\chi}$ is a natural transformation (which, in reality, amounts to verifying the commutativity of only five squares) is trivial.
\end{proof}

Denote by~$\xSe(1)$ the category of all simplicial monoids with order-unit, with normalized monoid order-embeddings (the latter means that the maps~$f$ satisfy $f(x)\leq f(y)$ implies $x\leq y$, where~$\leq$ denotes the algebraic preordering, which is here an ordering, on the monoid). Actually, all arrows of the diagram~$\vE$ are not only order-embeddings but \emph{coretractions}, that is, they all have a left inverse, so Corollaries~\ref{C:NoLift2} and~\ref{C:NoLift3} extend to the category of all simplicial monoids endowed with those maps as well.

\begin{corollary}\label{C:NoLift2}
There is no functor~$\lift\colon\xSe(1)\to\xRR^{\Vpmeas}$ \pup{resp., \linebreak$\lift\colon\xSe(1)\to\Reg^{\Vpmeas}$} such that $\forg\circ\lift\cong\nobreak\id$.
\end{corollary}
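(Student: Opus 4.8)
The plan is to derive a contradiction with the non-liftability of~$\vD$ recorded in Corollary~\ref{C:NoRR0Lift} (resp. Corollary~\ref{C:NoRegLift}), by transporting a hypothetical lifting of~$\vE$ along the natural transformation $\vec{\chi}\colon\vE\to\vD$ supplied by Lemma~\ref{L:FromEtoD}. So I would assume, towards a contradiction, that a functor $\lift\colon\xSe(1)\to\xRR^{\Vpmeas}$ with $\forg\circ\lift\cong\id$ exists. Since~$\vE$, equipped with the order-units that make its arrows normalized, is a diagram in~$\xSe(1)$, I can apply~$\lift$ to it and obtain a commutative diagram $\lift\vE$ of V-premeasured C*-algebras of real rank~$0$ with $\forg(\lift\vE)\cong\vE$. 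Composing with the components of this natural equivalence, I may assume $\forg(\lift\vE)=\vE$ on the nose; concretely, for each vertex~$p$ of the cube the premeasure at~$p$ is some $\mu_p\colon\Idp_\infty(R_p)\to\vE(p)$, and for each edge $e\colon p\to q$ the homomorphism~$\phi_e$ lifting $\vE(e)$ satisfies $\mu_q\circ\Idp_\infty(\phi_e)=\vE(e)\circ\mu_p$.

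Next I would push these premeasures forward along~$\vec{\chi}$: set $\nu_p:=\chi_p\circ\mu_p\colon\Idp_\infty(R_p)\to\vD(p)$ for each vertex~$p$, and keep the same homomorphisms~$\phi_e$ while declaring the monoid component of the edge~$e$ to be $\vD(e)$. Conditions (M0)--(M2) for~$\nu_p$ follow at once from those for~$\mu_p$, since~$\chi_p$ is a monoid homomorphism. The commutativity of the new diagram reduces to the identity $\nu_q\circ\Idp_\infty(\phi_e)=\vD(e)\circ\nu_p$, which I would verify by the chain
 \[
 \nu_q\circ\Idp_\infty(\phi_e)=\chi_q\circ\vE(e)\circ\mu_p
 =\vD(e)\circ\chi_p\circ\mu_p=\vD(e)\circ\nu_p\,,
 \]
the middle equality being exactly the naturality of~$\vec{\chi}$. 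Applying~$\forg$ to the resulting diagram returns the monoid diagram with vertices~$\vD(p)$ and edges~$\vD(e)$, namely~$\vD$ itself.

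The crux, and the only step requiring genuine verification, is that each~$\nu_p$ inherits the V-condition (M3), so that $(R_p,\nu_p)$ really is a \emph{V}-premeasured ring. Here I would use the two-stage factorization afforded by the hypotheses: given $c\in\Idp_\infty(R_p)$ and a splitting $\nu_p(c)=\alpha+\beta$ in~$\vD(p)$, the pre-\Vhom\ property of~$\chi_p$ (Lemma~\ref{L:FromEtoD}) first lifts $\alpha+\beta$ to a splitting $\mu_p(c)=\alpha'+\beta'$ in~$\vE(p)$ with $\chi_p(\alpha')=\alpha$ and $\chi_p(\beta')=\beta$, and then the V-premeasure property of~$\mu_p$ realizes $\alpha',\beta'$ by orthogonal idempotents $a,b$ with $c=a+b$, $\mu_p(a)=\alpha'$, $\mu_p(b)=\beta'$, whence $\nu_p(a)=\alpha$ and $\nu_p(b)=\beta$. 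With (M3) in hand, the diagram $\bigl((R_p,\nu_p),\phi_e\bigr)$ is a commutative diagram of V-premeasured C*-algebras of real rank~$0$ lifting~$\vD$ with respect to~$\forg$, contradicting Corollary~\ref{C:NoRR0Lift}. The regular-ring case is identical, word for word, with $\Reg^{\Vpmeas}$ in place of $\xRR^{\Vpmeas}$ and Corollary~\ref{C:NoRegLift} in place of Corollary~\ref{C:NoRR0Lift}.
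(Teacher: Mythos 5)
Your proposal is correct and follows essentially the same route as the paper: assume the functor exists, apply it to~$\vE$, and compose the resulting V-premeasures with the surjective pre-\Vhom s of Lemma~\ref{L:FromEtoD} to manufacture a lifting of~$\vD$ contradicting Corollary~\ref{C:NoRR0Lift} (resp.\ Corollary~\ref{C:NoRegLift}). The only difference is one of detail: the paper compresses the whole argument into one sentence, whereas you explicitly verify the point it leaves implicit, namely that the composite $\chi_p\circ\mu_p$ still satisfies the V-condition (M3) via the two-stage factorization through~$\vE(p)$.
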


\begin{proof}
We give the proof for $\xRR^{\Vpmeas}$. It suffices to prove that the diagram~$\vE$ has no lifting, with respect to the functor~$\forg$, by a diagram of V-premeasured C*-algebras of real rank~$0$. If~$\vA$ were such a lifting, then, composing all the V-premeasures in~$\vA$ with the corresponding pre-\Vhom s given by Lemma~\ref{L:FromEtoD}, we would obtain a lifting of the diagram~$\vD$, with respect to the functor~$\forg$, by a diagram in~$\xRR^{\Vpmeas}$, a contradiction by Corollary~\ref{C:NoRR0Lift}.

The proof for regular rings is similar, using this time Corollary~\ref{C:NoRegLift}.
\end{proof}

By using Remark~\ref{Rk:picirctau}, we obtain the following extension of Proposition~\ref{P:NonSplit}.

\begin{corollary}\label{C:NoLift3}
There is no functor~$\lift\colon\xSe(1)\to\xRR$ \pup{resp., $\lift\colon\xSe(1)\to\Reg$} such that $\rV\circ\lift\cong\nobreak\id$.
\end{corollary}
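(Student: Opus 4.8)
The plan is to reduce Corollary~\ref{C:NoLift3} to Corollary~\ref{C:NoLift2} by precomposing a hypothetical lifting functor with the canonical functor $\tau\colon\Ring\to\Ring^{\pmeas}$ of Definition~\ref{D:TwoFunct}, exploiting the identity $\rV=\forg\circ\tau$ recorded in Remark~\ref{Rk:picirctau}.

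Suppose, toward a contradiction, that there is a functor $\lift\colon\xSe(1)\to\xRR$ with $\rV\circ\lift\cong\id$. First I would observe that $\tau$ restricts to a functor from $\xRR$ to $\xRR^{\Vmeas}$: indeed, for every C*-algebra~$A$ of real rank~$0$ the canonical measure $\tau_A$ is a V-measure by Example~\ref{Ex:CanVMeas}, and $\tau$ sends a \Chom~$f$ to the morphism $(f,\rV(f))$ of premeasured rings, so $\tau(\lift(X))=(\lift(X),\tau_{\lift(X)})$ lies in $\xRR^{\Vmeas}\subseteq\xRR^{\Vpmeas}$ for each object~$X$ of~$\xSe(1)$.

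Next I would set $\lift':=\tau\circ\lift\colon\xSe(1)\to\xRR^{\Vpmeas}$. Since functor composition is associative and $\rV=\forg\circ\tau$, we obtain
\[
\forg\circ\lift'=\forg\circ\tau\circ\lift=\rV\circ\lift\cong\id.
\]
Thus $\lift'$ is a functor $\xSe(1)\to\xRR^{\Vpmeas}$ with $\forg\circ\lift'\cong\id$, which directly contradicts Corollary~\ref{C:NoLift2}. The same argument, with $\Reg$ in place of~$\xRR$ and $\Reg^{\Vpmeas}$ in place of~$\xRR^{\Vpmeas}$, handles the regular-ring case, since $\tau$ likewise restricts to a functor $\Reg\to\Reg^{\Vmeas}\subseteq\Reg^{\Vpmeas}$.

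There is essentially no obstacle here: all the content has already been front-loaded into Corollary~\ref{C:NoLift2}, and the only point that needs checking is the purely formal verification that $\tau$ carries the relevant categories of rings into their V-measured counterparts, which is immediate from Example~\ref{Ex:CanVMeas}. The natural equivalence $\rV\circ\lift\cong\id$ transports verbatim to $\forg\circ\lift'\cong\id$ because $\forg\circ\tau$ equals~$\rV$ on the nose, so no compatibility of natural transformations has to be re-derived.
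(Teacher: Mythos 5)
Your proposal is correct and is essentially the paper's own argument: the paper derives Corollary~\ref{C:NoLift3} from Corollary~\ref{C:NoLift2} precisely via Remark~\ref{Rk:picirctau} (the identity $\rV=\forg\circ\tau$), which is exactly your precomposition $\lift':=\tau\circ\lift$ spelled out. The only difference is that you make explicit the (trivial) verification that~$\tau$ lands in $\xRR^{\Vpmeas}$ (resp., $\Reg^{\Vpmeas}$) via Example~\ref{Ex:CanVMeas}, which the paper leaves implicit.
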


We shall see in the subsequent sections that the V-semiprimitivity assumption cannot be dispensed with in the statement of Theorem~\ref{T:NoLift}. We do not know whether Corollary~\ref{C:NoLift2} extends to all C*-algebras, or even to all rings, see Problems~\ref{Pb:RingsFunct} and~\ref{Pb:C*AlgVLift}.

\section{Collapse~$\vD$, collide with exchange rings}\label{S:Collapse}

Five arrows out of six in the middle part of the diagram~$\vD$ (cf. Figure~\ref{Fig:DiagrD}) are identities. This suggests that the structure of~$\vD$ is highly collapsible. In the present section we shall take advantage of this observation and define another diagram, denoted by~$\vL$, obtained by identifying as many ``symmetric'' parts of~$\vD$ as possible while keeping its main unliftability property (Theorem~\ref{T:NoLift}). Denote by~$\vLu$ the diagram obtained by adding the natural order-units to the vertices of~$\vL$. The diagrams~$\vL$ and~$\vLu$ are represented in Figure~\ref{Fig:DiagrL}. The maps~$\be$, $\bh$, and~$\bs$ are already arrows of~$\vD$. 

\begin{figure}[htb]
 \[
\xymatrixrowsep{2pc}\xymatrixcolsep{1.5pc}
\def\labelstyle{\displaystyle}
 \xymatrix{
 \ZZ^+\rloopd{15}{7}_{\id} & &
 (\ZZ^+,2)\rloopd{25}{7}_{\id} & & \\
 \ZZ^+\times\ZZ^+\ar[u]_{\bh} & &
 (\ZZ^+\times\ZZ^+,(1,1))\ar[u]_{\bh} & & \bh(x,y):=x+y\\
 \ZZ^+\times\ZZ^+\rloopd{15}{7}_{\bs}
 \ar@/^.5pc/[u]^{\id}\ar@/_.5pc/[u]_{\id} & &
 (\ZZ^+\times\ZZ^+,(1,1))\rloopd{30}{10}_{\bs}
 \ar@/^.5pc/[u]^{\id}\ar@/_.5pc/[u]_{\id}
 & & \bs(x,y):=(y,x)\\
 \ZZ^+\ar[u]_{\be} & &
 (\ZZ^+,1)\ar[u]_{\be} & & \be(x):=(x,x)
 }
 \]
\caption{The diagrams $\vL$ and $\vLu$}
\label{Fig:DiagrL}
\end{figure}

\begin{proposition}\label{P:NoLiftLu}
There is no lifting~$\vR$, with respect to the functor $\rV$, of the diagram~$\vL$, such that, labeling~$\vR$ as on Figure~\textup{\ref{Fig:LiftL}},
 \begin{equation}\label{Eq:EqnsonvL}
 f\circ e=g\circ e\,,\quad s\circ e=e\,,\quad h\circ f\circ s=v\circ h\circ f\,,
 \quad v\circ h\circ g=h\circ g\,,
 \end{equation}
and~$C$ is V-semiprimitive. In particular, $C$ cannot be a semiprimitive exchange ring, and thus it can be neither a regular ring nor a C*-algebra of real rank~$0$.
\end{proposition}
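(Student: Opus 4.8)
The plan is to transform a hypothetical lifting of~$\vL$ into a lifting of the cube~$\vD$, and then to invoke Corollary~\ref{C:NoLift1}. Label the lifting~$\vR$ as in Figure~\ref{Fig:LiftL}, say by ring homomorphisms $e\colon A\to B$ (lifting~$\be$), $s\colon B\to B$ (lifting~$\bs$), $f,g\colon B\to C$ (lifting the two parallel copies of the identity), $h\colon C\to D$ (lifting~$\bh$), and $v\colon D\to D$ (lifting the identity); thus $\rV(e)=\be$, $\rV(s)=\bs$, $\rV(f)=\rV(g)=\id$, $\rV(h)=\bh$, and $\rV(v)=\id$. Assuming that \eqref{Eq:EqnsonvL} holds and that~$C$ is V-semiprimitive, I would construct from~$\vR$ a commutative diagram~$\vR'$ of rings, indexed by the cube, with value~$A$ at~$\bA$, value~$B$ at each~$\bA_j$, value~$C$ at each~$\bB_j$, and value~$D$ at~$\bB$, such that $\rV\vR'=\vD$. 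As the three top vertices $\bB_0,\bB_1,\bB_2$ would then all be lifted by the single V-semiprimitive ring~$C$, the existence of~$\vR'$ would contradict Corollary~\ref{C:NoLift1}, which is the desired conclusion.

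The crux is the choice of homomorphism labelling each edge of~$\vD$, built from $e,f,g,s,h,v$, so that it induces on~$\rV$ the arrow prescribed by~$\vD$ and so that every square face commutes. I would lift each edge $\bA\to\bA_j$ by~$e$. Among the six slanted edges of the cube only $\bA_1\to\bB_2$ carries~$\bs$, the remaining five carrying the identity; so I would assign~$g$ to the edges $\bA_0\to\bB_2$, $\bA_0\to\bB_1$, $\bA_2\to\bB_1$, $\bA_2\to\bB_0$, assign~$f$ to the edge $\bA_1\to\bB_0$, and assign the composite $f\circ s$ to the edge $\bA_1\to\bB_2$. For the three top edges I would assign~$h$ to $\bB_2\to\bB$ and $v\circ h$ to both $\bB_1\to\bB$ and $\bB_0\to\bB$. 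Since $\rV(v)=\id$ and $\rV(s)=\bs$ while $\rV(f)=\rV(g)=\id$, each of these composites induces exactly the corresponding arrow of~$\vD$, so that $\rV\vR'=\vD$.

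It then remains to verify that the six square faces of~$\vR'$ commute, and this is exactly where the relations \eqref{Eq:EqnsonvL} are consumed. The three faces through~$\bA$ reduce, after precomposition with~$e$, to the identities $g\circ e=f\circ s\circ e$, $g\circ e=g\circ e$, and $f\circ e=g\circ e$; these follow from $s\circ e=e$ and $f\circ e=g\circ e$. The three faces through~$\bB$ reduce to $h\circ g=v\circ h\circ g$ (the face at~$\bA_0$), $h\circ f\circ s=v\circ h\circ f$ (the face at~$\bA_1$), and $v\circ h\circ g=v\circ h\circ g$ (the face at~$\bA_2$, which is trivial); the first two of these are precisely the last two relations of \eqref{Eq:EqnsonvL}. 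I expect the main obstacle to be precisely this bookkeeping: the single $\bs$-labelled edge and the two $v$-twists on the top edges must be positioned so that the six face conditions collapse to exactly the four available relations, whereas a careless labelling forces a spurious identity, such as $h\circ f=h\circ g$, that is not among the hypotheses.

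Finally, for the ``in particular'' clause: a semiprimitive exchange ring is V-semiprimitive by Proposition~\ref{P:SeExVsem}. Every regular ring is a semiprimitive exchange ring, being an exchange ring and semiprimitive by Proposition~\ref{P:RegSemipr}, and every C*-algebra of real rank~$0$ is a semiprimitive exchange ring, being an exchange ring by Proposition~\ref{P:RR0Exch} and semiprimitive by Proposition~\ref{P:C*Semipr}. Hence~$C$ can be none of these.
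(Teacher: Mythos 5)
Your proposal is correct and takes essentially the same route as the paper: your edge assignments ($g$ on four of the slanted edges, $f$ on $\bA_1\to\bB_0$, $f\circ s$ on $\bA_1\to\bB_2$, $h$ on $\bB_2\to\bB$, and $v\circ h$ on the other two top edges) reproduce exactly the ``unfolded'' commutative diagram of Figure~\ref{Fig:CubicLift} that the paper uses, followed by the same appeal to Corollary~\ref{C:NoLift1} and to Propositions~\ref{P:SeExVsem}, \ref{P:RegSemipr}, \ref{P:RR0Exch}, and~\ref{P:C*Semipr} for the ``in particular'' clause. The only difference is presentational: you spell out the face-by-face commutativity check that the paper leaves implicit in its figure.
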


\begin{figure}[htb]
 \[
\xymatrixrowsep{2pc}\xymatrixcolsep{1.5pc}
\def\labelstyle{\displaystyle}
 \xymatrix{
 D\rloopd{15}{7}_{v} & & \rV(v)=\id_{\ZZ^+}\\
 C\ar[u]_{h} & & \rV(h)=\bh\\
 B\rloopd{15}{7}_{s}
 \ar@/^.5pc/[u]^{f}\ar@/_.5pc/[u]_{g}
 & & \rV(f)=\rV(g)=\id_{\ZZ^+\times\ZZ^+}\,,\ \rV(s)=\bs\\
 A\ar[u]_{e} & & \rV(e)=\be
 }
 \]
\caption{A lifting $\vR$ of $\vL$}
\label{Fig:LiftL}
\end{figure}

\begin{note}
Equations such as $\rV(s)=\bs$, $\rV(f)=\id_{\ZZ^+\times\ZZ^+}$, and so on, are, strictly speaking, stated only up to the natural equivalence between~$\rV(\vR)$ and~$\vLu$.
\end{note}

\begin{proof}
Suppose that~$\vR$ satisfies the given conditions. Then we can ``unfold''~$\vR$ to the commutative diagram represented in Figure~\ref{Fig:CubicLift}, which lifts~$\vD$ with respect to the functor~$\rV$; a contradiction by Corollary~\ref{C:NoLift1}.
\end{proof}

\begin{figure}[htb]
 \[
\xymatrixrowsep{2.5pc}\xymatrixcolsep{1.5pc}
\def\labelstyle{\displaystyle}
\xymatrix{
& D &\\
C\ar[ur]^{h} &C\ar[u]|-{\tvi vh} & C\ar[lu]_{vh}\\
B\ar[u]^{g}\ar[ur]_(.68){\! g} &
B\ar[ul]_(.72){\! fs}\ar[ur]^(.72){f\!\!} &
B\ar[ul]^(.68){g\!}\ar[u]_{g}\\
& A\ar[lu]^{e}\ar[u]_{e}\ar[ru]_{e} &
}
 \]
\caption{A lifting of $\vD$}
\label{Fig:CubicLift}
\end{figure}

\begin{theorem}\label{T:LuLift}
Let~$K$ be a field. Then the diagram~$\vLu$ has a lifting, with respect to the functor $\rVu$, by unital exchange algebras over~$K$, in such a way that the equations~\textup{\eqref{Eq:EqnsonvL}} are satisfied and~$s$ and~$v$ are both involutive automorphisms.
\end{theorem}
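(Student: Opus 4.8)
The plan is to exhibit explicit unital exchange $K$-algebras at the four vertices of~$\vLu$ together with explicit $K$-algebra homomorphisms along its edges, inducing the prescribed maps of~$\vLu$ under~$\rVu$. First I would pin down the objects so that $\rVu$ comes out right and the exchange property is automatic: at the bottom take $A=K$ (so $\rVu(A)=(\ZZ^+,1)$); at the two middle vertices take algebras with $\rVu=(\ZZ^+\times\ZZ^+,(1,1))$, namely $B=K\times K$ for the lower one and, for the upper one~$C$, a \emph{non-V-semiprimitive} exchange algebra (the upper-triangular algebra of Example~\ref{Ex:TrMat} being the prototype, since V-semiprimitivity of~$C$ is forbidden by Proposition~\ref{P:NoLiftLu}); and at the top take $D=\Mat_2(K)$, so that $\rVu(D)=(\ZZ^+,2)$ via $1_D=e_{11}+e_{22}$. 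All of these are semiperfect, hence exchange by Section~\ref{S:BasicExch}. The bottom and first-loop data are then the obvious ones: $e\colon A\to B$ the diagonal $a\mapsto(a,a)$ and $s\colon B\to B$ the coordinate swap $(a,b)\mapsto(b,a)$. These give at once $\rVu(e)=\be$, $\rVu(s)=\bs$, $s^2=\id$, and $s\circ e=e$, disposing of the first two relations in~\eqref{Eq:EqnsonvL}.

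All the content lies in the upper half: the parallel maps $f,g\colon B\to C$ (each inducing $\id_{\ZZ^+\times\ZZ^+}$), the map $h\colon C\to D$ (inducing $\bh$, i.e. $h(1_C)=1_D$ with $h$ collapsing the two classes), and the involution~$v$ on~$D$. Since $f,g$ must send the standard idempotents $e_1,e_2$ of~$B$ to orthogonal class-$(1,0)$ and class-$(0,1)$ idempotents of~$C$, they form a family of mutually conjugate embeddings, and $f\circ e=g\circ e$ (third relation) holds automatically because both $K$-algebra maps send the scalar idempotents $e(A)=\{(a,a)\}$ to scalars. Following the reduction announced in the introduction to Section~\ref{S:Collapse}, I would fix~$f$ to be the ``straight'' embedding (so $h\circ f(e_1)=e_{11}$) and encode~$g$ by the single idempotent $c:=h\circ g(e_1)\in\Idp_\infty(D)$. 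Using $s(e_1)=e_2$, the last two relations of~\eqref{Eq:EqnsonvL} then collapse to exactly the two conditions on the pair $(c,v)$ highlighted in the paper: that $v$ be an involutive automorphism \emph{interchanging} $h\circ f(e_1)$ and $h\circ f(e_2)$, that $c\sim c_0$ for the standard minimal idempotent~$c_0$ (this is~\eqref{Eq:cequivc0}, which guarantees $\rVu(h\circ g)=\bh$), and that $v(c)=c$ (this is~\eqref{Eq:v(c)=c}).

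Granting such a matrix~$c$ and involution~$v$, the verification that the assembled diagram is a genuine lifting is routine bookkeeping: one reads off~$g$ from~$c$, uses Lemma~\ref{L:VmeasDec} to split idempotents so that every $\rVu$-value is the prescribed map, and checks the four equations of~\eqref{Eq:EqnsonvL} together with $v^2=\id$. Unfolding then produces the cube lifting of Figure~\ref{Fig:CubicLift}, exactly as in the proof of Proposition~\ref{P:NoLiftLu}.

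The hard part will be constructing~$c$ and~$v$ explicitly, and doing it \emph{uniformly in~$K$}. The constraints are in genuine tension: $v$ must \emph{move} the image of $h\circ f(e_1)$ (as it swaps it with $h\circ f(e_2)$) while \emph{fixing} the minimal idempotent~$c$, so $c$ and $h\circ f(e_1)$, though of the same class, must occupy different positions in~$D$. This is possible only because~$C$ is not V-semiprimitive, i.e. it carries idempotents that are orthogonal in~$\rV(C)$ yet not orthogonal as ring elements (the phenomenon of Examples~\ref{Ex:TrMat} and~\ref{Ex:semiprnotV}, and the reason Proposition~\ref{P:SeExVsem} does not apply): this ``skewness'' is precisely what lets the $v$-fixed~$c$ sit off the swapped axis. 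Over a field of characteristic $\neq2$ one can take $v=\operatorname{Ad}\bigl(\begin{smallmatrix}0&1\\1&0\end{smallmatrix}\bigr)$ and the symmetric projection $c=\tfrac12\bigl(\begin{smallmatrix}1&1\\1&1\end{smallmatrix}\bigr)$, which is $v$-invariant and of the correct class. In characteristic~$2$ this symmetric idempotent disappears, and a short computation shows that no involutive inner automorphism of $\Mat_2(K)$ can simultaneously fix a minimal idempotent and interchange a complementary pair. Resolving this is the crux: I would enlarge the position space underlying~$D$, replacing the naive symmetric idempotent by the explicit matrix~$c$ of~\eqref{Eq:MatrixcExch}, so that a $v$-invariant minimal idempotent genuinely exists over every~$K$, turning $s$ and~$v$ into honest involutive automorphisms while keeping~$C$ and~$D$ exchange rings.
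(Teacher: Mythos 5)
Your reduction of the problem is correct and is in fact the same as the paper's: the bottom of $\vLu$ is handled by $A=K$, $B=K\times K$, $e$ the diagonal map, $s$ the swap, and everything collapses to producing $C$, $D$, $h$, an involution $v$ of $D$ interchanging $hf(e_1)$ and $hf(e_2)$, and an idempotent $c=g(e_1)$ fixed by $v$ and equivalent to $f(e_1)$. But the construction meeting these constraints is the entire content of the theorem, and it is missing; worse, the concrete attempts you offer cannot be repaired. The key point you gloss over is \emph{where} the equivalence $c\sim c_0$ must hold: since $g$ is an edge landing in $C$ and $\rV(g)$ must be $\id_{\ZZ^+\times\ZZ^+}$, you need $[c]_C=[f(e_1)]_C$ in $\rV(C)$, not merely equivalence of the images in $D$. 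Your characteristic-$\neq2$ recipe ignores this: the projection $c=\tfrac12\begin{pmatrix}1&1\\1&1\end{pmatrix}$ does not lie in your prototype $C$ (the upper-triangular algebra), so $g$ is not even defined; and any unital subalgebra of $\Mat_2(K)$ containing both $e_{11}$ and this $c$ contains $e_{11}c(1-e_{11})=\tfrac12e_{12}$ and $(1-e_{11})ce_{11}=\tfrac12e_{21}$, hence is all of $\Mat_2(K)$, whose $\rV$ is $\ZZ^+$ rather than $\ZZ^+\times\ZZ^+$. This failure is not an accident of your choices. By Skolem--Noether every involutive $K$-automorphism $v$ of $\Mat_2(K)$ is inner, and if $v$ fixes a rank-one idempotent $c$ and sends a rank-one idempotent $p$ to $1-p$, then in a basis where $c=e_{11}$ the conjugating matrix is diagonal, forcing both diagonal entries of $p$ to equal $\tfrac12$ (impossible in characteristic~$2$) and both off-diagonal entries to be nonzero, so that $c$ and $p$ again generate all of $\Mat_2(K)$. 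A non-injective $h$ does not help: its image would contain $c$ and $p$, hence be all of $\Mat_2(K)$, and then Proposition~\ref{P:V(R/I)} would force $\rV(h)$ to be the quotient of $\rV(C)$ by an o-ideal, which $\bh$ is not (it identifies $(1,0)$ with $(0,1)$ although $\bh^{-1}(0)=\set{0}$). So no lifting with $D=\Mat_2(K)$ exists over \emph{any} field: characteristic~$2$ is not the crux; the crux is arranging for two equivalent, non-commuting idempotents to generate a proper subalgebra.

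What resolves this tension --- and what your final sentence gestures at without supplying --- is to take $D$ to be a matrix algebra over a nontrivial extension field equipped with a $K$-involution, so that $v$ can be chosen \emph{semilinear} and thereby escape Skolem--Noether, and to take $C$ to be a proper subalgebra in which the two idempotents become equivalent only modulo a radical. Concretely, the paper sets $D:=\Mat_2(K(\xt))$, lets $\sigma$ be the $K$-automorphism $\xt\mapsto1-\xt$, defines $v\begin{pmatrix}x_0&x_1\\x_2&x_3\end{pmatrix}:=\begin{pmatrix}\sigma x_3&\sigma x_2\\ \sigma x_1&\sigma x_0\end{pmatrix}$, and takes $C:=\begin{pmatrix}\Ktzero&\xt\Ktzero\\ \xt\Ktzero&\Ktzero\end{pmatrix}$, where $\Ktzero$ is the local ring of rational functions without pole at $0$. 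Then $C$ is an exchange ring because its diagonal corners are local (Nicholson), $J:=\xt\Mat_2(\Ktzero)$ is its Jacobson radical with $C/J\cong K\times K$, so $\rV(C)\cong\ZZ^+\times\ZZ^+$ and any idempotent of $C$ congruent to $c_0$ modulo $J$ is equivalent to $c_0$ \emph{in}~$C$; finally the explicit idempotent $c$ of~\eqref{Eq:MatrixcExch} has off-diagonal entries vanishing at $\xt=0$ (so $c\in C$ and $c\equiv c_0\pmod J$) and satisfies $v(c)=c$, in every characteristic. None of these steps --- the choice of $D$, the semilinear $v$, the proof that $C$ is an exchange ring, the computation of $\rV(C)$, and the exhibition of $c$ --- appears in your proposal; citing the label~\eqref{Eq:MatrixcExch} is not a substitute for constructing its content and its ambient algebras.
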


\begin{proof}
Denote by $K(\xt)$ (resp., $K[\xt]$) the field of rational functions (resp. the ring of polynomials) over~$K$ in the indeterminate~$\xt$. We denote by~$\sigma$ the unique automorphism of~$K(\xt)$ such that $\sigma(\xt)=1-\xt$. Observe that $\sigma^2=\id_{K(\xt)}$.

The subalgebra
 \begin{equation}\label{Eq:defC0}
 \Ktzero:=\setm{p(\xt)/q(\xt)}{p,q\in K[\xt]\text{ and }q(0)\neq0}
 \end{equation}
is a local subring of~$K(\xt)$, with maximal ideal~$\xt\Ktzero$. In particular, $\Ktzero$ is an exchange ring. The matrices $c_0:=\begin{pmatrix}1&0\\ 0&0\end{pmatrix}$ and $c_1:=1-c_0=\begin{pmatrix}0&0\\ 0&1\end{pmatrix}$ are both idempotent elements in the $K$-algebra
 \begin{equation}\label{Eq:defC}
 C:=\begin{pmatrix}\Ktzero&\xt\Ktzero\\ \xt\Ktzero&\Ktzero
 \end{pmatrix}\,.
 \end{equation}
As $c_0Cc_0\cong c_1Cc_1\cong\Ktzero$ are exchange rings, it follows from Nicholson \cite[Corollary~2.6]{Nicho77} that~$C$ is an exchange subring of $D:=\Mat_2(K(\xt))$. We denote by~$v$ the involutive automorphism of~$D$ defined by the rule
 \[
 v\begin{pmatrix}x_0&x_1\\ x_2&x_3\end{pmatrix}:=
 \begin{pmatrix}\sigma x_3&\sigma x_2\\
 \sigma x_1&\sigma x_0\end{pmatrix}\,,\quad
 \text{for all }x_0,x_1,x_2,x_3\in K(\xt)\,.
 \]
The set $J:=\xt\Mat_2\bigl(\Ktzero\bigr)$ is a two-sided ideal of~$C$. Furthermore, the relation $\det(1-x)\equiv 1\pmod{\xt\Ktzero}$ holds for each $x\in J$; in particular, $1-x$ is invertible in~$C$. Hence~$J$ is contained in the Jacobson radical of~$C$.

Set $B:=K\times K$, $b_0:=(1,0)$, and $b_1:=(0,1)$. Then the map $s\colon B\to B$, $(x,y)\mapsto(y,x)$ is an involutive automorphism of~$B$ with $s(b_0)=b_1$. Furthermore, $\rV(B)\cong\ZZ^+\times\ZZ^+$ with simplicial basis $\set{[b_0]_B,[b_1]_B}$.
We can define a surjective homomorphism $\varphi\colon C\onto B$ of $K$-algebras by the rule
 \[
 \varphi\begin{pmatrix}x_0&x_1\\ x_2&x_3\end{pmatrix}:=
 (x_0(0),x_3(0))\,.
 \]
As the Jacobson radical of~$K\times K$ is zero, it follows that~$J$ contains, and thus is equal to, the Jacobson radical of~$C$. {}From $\rV(J)=\set{0}$ and Proposition~\ref{P:V(R/I)} it follows that $\rV(C)\cong\rV(C/J)$ \emph{via} $[e]_C\mapsto[e+\Mat_\infty(J)]_{C/J}$, thus $\rV(C)\cong\ZZ^+\times\ZZ^+$ with simplicial basis $\set{[c_0]_C,[c_1]_C}$.

Now the matrix
 \begin{equation}\label{Eq:MatrixcExch}
 c:=\begin{pmatrix}(1-\xt)^2(1+2\xt) & \xt(1-\xt)(1+2\xt)\\
 \xt(1-\xt)(3-2\xt) & \xt^2(3-2\xt)\end{pmatrix}
 \end{equation}
is idempotent, and it belongs to~$C$. As $c(0)=\begin{pmatrix}1&0\\ 0&0\end{pmatrix}=c_0(0)$, that is, $c\equiv c_0\pmod{J}$, we get the first key property of~$c$:
 \begin{equation}\label{Eq:cequivc0}
 [c]_C=[c_0]_C\quad\text{and}\quad [1-c]_C=[1-c_0]_C\,.
 \end{equation}
Furthermore, an elementary calculation yields the second key property of~$c$:
 \begin{equation}\label{Eq:v(c)=c}
 v(c)=c\,.
 \end{equation}
Therefore, defining~$h$ as the inclusion map from~$C$ into~$D$ and
defining embeddings of unital $K$-algebras $f\colon K^2\into C$ and $g\colon K^2\into C$ by the rules
 \[
 f(x,y):=xc_0+y(1-c_0)\quad\text{and}\quad
 g(x,y):=xc+y(1-c)\,,\quad\text{for all }x,y\in K\,,
 \]
we obtain the equations $hfs=vhf$ and (using~\eqref{Eq:v(c)=c}) $vhg=hg$.

Finally, set $A:=K$ and $e\colon A\into B$, $x\mapsto(x,x)$. Obviously, $se=e$ and $fe=ge$. We obtain a diagram~$\vR$ as on Figure~\ref{Fig:LiftL}. Verifying that~$\vR$ lifts~$\vLu$, with respect to the abovementioned choices of simplicial bases, amounts to verifying the following:
 \begin{align*}
 \rV(e)([1]_A)&=[b_0]_B+[b_1]_B&&\text{(obvious);}\\
 \rV(f)([b_j]_B)&=[c_j]_C\quad\text{for each }j<2&&\text{(obvious);}\\
 \rV(g)([b_j]_B)&=[c_j]_C\quad\text{for each }j<2&&
 \text{(this follows from~\eqref{Eq:cequivc0})}\\
 \rV(h)([c_j]_C)&=[c_0]_D=[c_1]_D\quad\text{for each }j<2
 &&\text{(obvious);}\\
 \rV(v)([1_D]_D)&=[1_D]_D&&\text{(obvious).} 
 \end{align*}
This concludes the proof.
\end{proof}

The ``unfolding'' technique used in the proof of Proposition~\ref{P:NoLiftLu} yields then immediately the following consequence of Theorem~\ref{T:LuLift}.

\begin{proposition}\label{P:cDliftedExch}
The diagram~$\vDu$ can be lifted, with respect to the functor $\rVu$, by a commutative diagram of unital exchange rings.
\end{proposition}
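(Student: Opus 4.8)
The plan is to take the lifting of $\vLu$ produced by Theorem~\ref{T:LuLift} and run the ``unfolding'' procedure from the proof of Proposition~\ref{P:NoLiftLu} in the constructive direction, rather than as a source of contradiction. Concretely, Theorem~\ref{T:LuLift} supplies a diagram $\vR$ as on Figure~\ref{Fig:LiftL}, consisting of unital exchange $K$-algebras $A$, $B$, $C$, $D$ together with morphisms $e$, $f$, $g$, $s$, $h$, $v$ satisfying the relations~\eqref{Eq:EqnsonvL} (with $s$, $v$ involutive) and with $\rVu(\vR)\cong\vLu$. First I would assemble from these data a $\set{0,1}^3$-indexed diagram of unital rings by placing $A$ at the bottom vertex, $B$ at each atom $\bA_0,\bA_1,\bA_2$, $C$ at each coatom $\bB_0,\bB_1,\bB_2$, and $D$ at the top vertex, exactly as displayed in Figure~\ref{Fig:CubicLift}. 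The twelve edge morphisms are assigned accordingly: each bottom edge $\bA\to\bA_j$ becomes $e$; the six middle edges become four copies of $g$, one copy of $f$ (on $\bA_1\to\bB_0$), and one copy of $fs$ (on the unique edge of $\vD$ labeled $\bs$, namely $\bA_1\to\bB_2$); and the three top edges $\bB_j\to\bB$ become $h$, $vh$, $vh$.

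The first verification is that this assignment defines a genuine commutative diagram, i.e.\ a functor on the cube poset. This reduces to checking the six square faces, and each one collapses to one of the identities in~\eqref{Eq:EqnsonvL}. The face through $\bA_1$ and the top yields $hfs=vhf$; the face $\bA_0\to\bB_2\to\bB$ versus $\bA_0\to\bB_1\to\bB$ yields $hg=vhg$; the face through the two atoms below $\bB_0$ yields $fe=ge$; one further face reduces to $ge=fse$ upon invoking $se=e$ together with $fe=ge$; and the remaining two faces are trivially equal. Thus the commutativity of the unfolded cube is \emph{exactly} encoded by~\eqref{Eq:EqnsonvL}, which is precisely why Theorem~\ref{T:LuLift} was stated so as to guarantee these relations.

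The second verification is that applying $\rVu$ to the unfolded diagram returns $\vDu$ up to natural equivalence. Here I would combine the natural equivalence $\rVu(\vR)\cong\vLu$ with the functoriality of $\rV$: since $\rV(v)=\id$ and $\rV(s)=\bs$, the composites $vh$ and $fs$ satisfy $\rV(vh)=\bh$ and $\rV(fs)=\bs$, so every edge of the unfolded cube maps under $\rV$ to exactly the edge prescribed by $\vD$ on the left-hand side of Figure~\ref{Fig:DiagrD}. The vertices carry $\rV(A)\cong\ZZ^+$, $\rV(B)\cong\rV(C)\cong\ZZ^+\times\ZZ^+$, $\rV(D)\cong\ZZ^+$, with the order-units $[1_A]=1$, $(1,1)$, $(1,1)$, $[1_D]=2$ inherited from the $\rVu$-lifting, matching those prescribed by $\vDu$. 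Assembling the component isomorphisms of $\rVu(\vR)\cong\vLu$ at each cube vertex then produces the required natural equivalence.

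I do not anticipate a real obstacle: the substantive work, namely producing the matrix $c$ of~\eqref{Eq:MatrixcExch} with the two key properties~\eqref{Eq:cequivc0} and~\eqref{Eq:v(c)=c}, has already been discharged inside Theorem~\ref{T:LuLift}. The only point demanding care is bookkeeping --- matching each of the twelve cube edges to the correct composite among $e$, $f$, $g$, $fs$, $h$, $vh$, so that the distinguished $\bs$-edge and the two $vh$-edges land in their proper places. Once this matching is pinned down, both verifications above are purely mechanical, and the proposition follows.
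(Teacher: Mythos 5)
Your proposal is correct and is exactly the paper's argument: the paper proves this proposition in one line by applying the ``unfolding'' technique of Proposition~\ref{P:NoLiftLu} (Figure~\ref{Fig:CubicLift}) to the lifting of $\vLu$ supplied by Theorem~\ref{T:LuLift}, which is precisely what you carry out. Your face-by-face verification that the cube's commutativity reduces to the relations~\eqref{Eq:EqnsonvL}, and that $\rV(fs)=\bs$ and $\rV(vh)=\bh$ recover the edges of $\vDu$, simply makes explicit the bookkeeping the paper leaves to the reader.
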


\section{A lifting of $\vLu$ by C*-algebras}\label{S:C*algLift}

For any topological space~$X$ and any positive integer~$n$, we define the C*-algebra $\AA^{(n)}_X:=\Mat_n(\rC(X,\CC))$. In case~$X$ is given a distinguished point~$0$, we set
 \begin{align*}
 \II^{(n)}_X&:=\setm{x\in\AA^{(n)}_X}{x(0)=0}\,,&&
 \text{an ideal of }\AA^{(n)}_X\,;\\
 \DD^{(n)}_X&:=\begin{pmatrix}\AA^{(n)}_X & \II^{(n)}_X\\
 \II^{(n)}_X & \AA^{(n)}_X\end{pmatrix}\,,&&
 \text{a C*-subalgebra of }\AA^{(2n)}_X;\\
 \JJ^{(n)}&:=\Mat_2(\II^{(n)}_X)\,,&&\text{an ideal of }\DD^{(n)}_X\,.
 \end{align*}
We shall often identify~$\AA^{(n)}_X$ with $\rC(X,\Mat_n(\CC))$ and~$\CC$ with the subalgebra of~$\rC(X,\CC)$ consisting of all constant functions. This way, the C*-algebra
 \[
 \ol{\DD}^{(n)}_X:=\begin{pmatrix}\Mat_n(\CC)&0\\ 0&\Mat_n(\CC)\end{pmatrix}
 \]
is a unital subalgebra of $\DD^{(n)}_X$.
\smallskip

\begin{all}{Convention}
In all the notations $\AA^{(n)}_X$, $\II^{(n)}_X$, $\DD^{(n)}_X$, $\JJ^{(n)}_X$, $\ol{\DD}^{(n)}_X$, defined above, we shall drop the subscript~$X$ and the superscript~$(n)$ in case $X=[0,1]$ \pup{the unit interval of~$\RR$} and $n=1$. We introduced the algebra $\DD=\DD^{(1)}_{[0,1]}$ in Example~\textup{\ref{Ex:semiprnotV}}.
\end{all}

\begin{proposition}\label{P:RRSRAC}
The algebras~$\AA$, $\DD$, and $\Mat_2(\AA)$ have both real rank and stable rank equal to~$1$.
\end{proposition}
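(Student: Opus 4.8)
The plan is to verify, for each of the three algebras, the two inequalities ``$\le 1$'' and ``$\ge 1$'' for real rank and for stable rank separately. The lower bound for stable rank is automatic, since stable rank is always at least~$1$. The heart of the matter is therefore the two upper bounds together with the assertion that the real rank is \emph{not}~$0$; the latter is where the earlier machinery of the paper pays off.

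I would first dispose of $\AA=\rC([0,1],\CC)$ by citing the standard computations: by Brown and Pedersen the real rank of $\rC(X)$ equals the covering dimension of~$X$, so the real rank of~$\AA$ is $\dim[0,1]=1$, and by Rieffel the stable rank of $\rC(X)$ is $\lfloor(\dim X)/2\rfloor+1$, whence the stable rank of~$\AA$ is~$1$. For the lower bound on the real ranks of $\Mat_2(\AA)$ and~$\DD$ I would observe that each of these algebras has a corner isomorphic to~$\AA$: taking $e:=\begin{pmatrix}1&0\\0&0\end{pmatrix}$ (a constant, hence diagonal, idempotent lying in~$\DD$), one has $e\,\Mat_2(\AA)\,e\cong e\DD e\cong\AA$. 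Since $\AA$ has real rank~$1\ne0$, it is not an exchange ring by Proposition~\ref{P:RR0Exch}; by Lemma~\ref{L:eReErch}\,(i) a corner of an exchange ring is an exchange ring, so neither $\Mat_2(\AA)$ nor~$\DD$ is an exchange ring, and therefore, again by Proposition~\ref{P:RR0Exch}, neither has real rank~$0$. (For~$\DD$ one may alternatively invoke Example~\ref{Ex:semiprnotV}: $\DD$ is a non-V-semiprimitive C*-algebra, hence by Propositions~\ref{P:C*Semipr} and~\ref{P:SeExVsem} it is not a semiprimitive exchange ring, thus not an exchange ring.) Hence the real ranks of $\Mat_2(\AA)$ and~$\DD$ are at least~$1$.

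For the upper bounds I would use a general-position argument over the one-dimensional space~$[0,1]$. A self-adjoint element of $\rC([0,1],\Mat_2(\CC))$ is a continuous map into the $4$-real-dimensional space of Hermitian matrices, and a pair $(b_0,b_1)$ has $b_0^2+b_1^2$ invertible at a point~$t$ precisely when $b_0(t)$ and $b_1(t)$ have no common null vector (since $\langle(b_0^2+b_1^2)\xi,\xi\rangle=\norm{b_0\xi}^2+\norm{b_1\xi}^2$). The set of pairs of Hermitian $2\times2$ matrices admitting a common null vector is a union, over the null direction in~$\mathbb{CP}^{1}$, of the loci killing a fixed vector, so it has real codimension~$4$ in the $8$-dimensional space of pairs; likewise the singular matrices form a complex hypersurface of real codimension~$2$ in $\Mat_2(\CC)$. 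As $\dim[0,1]=1$ lies strictly below both~$2$ and~$4$, a generic small perturbation of any element (resp.\ of any pair of self-adjoint elements) of $\Mat_2(\AA)$ avoids the bad locus, which yields stable rank~$\le1$ and real rank~$\le1$ for $\Mat_2(\AA)$. (The stable-rank bound for $\Mat_2(\AA)$ also follows at once from Rieffel's formula $\mathrm{sr}(\Mat_2(A))=\lceil(\mathrm{sr}(A)-1)/2\rceil+1$.) For~$\DD$ I would run the same argument while respecting the constraint that the value at~$0$ be diagonal: given $f\in\DD$ (resp.\ a self-adjoint pair), I would first perform an explicit preliminary perturbation \emph{at the basepoint}, using that $f(0)$ is diagonal, to make $f(0)$ invertible (resp.\ to make each of the two diagonal pairs of entries nonzero, so the Hermitian pair at~$0$ has no common null vector), extend it to a small perturbation supported near~$0$ keeping the value at~$0$ diagonal, and then perturb on a set $[\delta,1]$ rel a neighborhood of~$0$ by the codimension count above; since this leaves $f(0)$ untouched, the result still lies in~$\DD$. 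Combining the upper and lower bounds gives real rank and stable rank equal to~$1$ in all three cases.

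The main obstacle I anticipate is making the general-position step rigorous for merely continuous (not smooth) maps out of~$[0,1]$, and handling the basepoint constraint of~$\DD$ cleanly: one must apply the ``avoid a closed set of codimension $>\dim X$'' principle to the correct stratified bad locus, and check that the preliminary diagonal perturbation at~$0$ does not reintroduce singularities there. This is where I would be most careful, invoking the dimension-theoretic form of general position rather than a differential-topological one.
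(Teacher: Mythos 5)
Your proposal is correct, but it follows a genuinely different route from the paper's. The paper's proof is essentially a chain of citations: $\mathrm{RR}(\AA)=1$ and $\mathrm{sr}(\AA)=1$ from the Brown--Pedersen and Rieffel formulas for $\rC(X)$; $\mathrm{sr}(\Mat_2(\AA))=1$ from Rieffel's matrix-algebra theorem; the real rank \emph{upper} bounds for $\Mat_2(\AA)$ and $\DD$ from the Brown--Pedersen inequality $\mathrm{RR}\les 2\,\mathrm{sr}-1$; the lower bound $\mathrm{RR}(\DD)\geq1$ from monotonicity of real rank under passage to hereditary subalgebras (as $\AA$ is isomorphic to a hereditary subalgebra of~$\DD$); and the only genuine work, $\mathrm{sr}(\DD)=1$, by adapting the second half of the proof of Rieffel's Theorem~3.3. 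You do two things differently. First, your lower bounds go through the paper's own machinery instead of Brown--Pedersen: the corner $e\Mat_2(\AA)e\cong e\DD e\cong\AA$ with $e=\begin{pmatrix}1&0\\0&0\end{pmatrix}$, combined with Lemma~\ref{L:eReErch}(i) and Proposition~\ref{P:RR0Exch}, correctly shows that neither $\Mat_2(\AA)$ nor~$\DD$ can have real rank~$0$; this part is complete as written and arguably more in the spirit of the paper than the citation it replaces (your alternative via Example~\ref{Ex:semiprnotV} also works and is not circular, though it forward-references the computation of $\rV(\DD)$ done later in the same section). Second, your upper bounds are proved by hand via general position over $[0,1]$ rather than by citation; your codimension counts (two for singular matrices, four for Hermitian pairs with a common null vector, the latter via $\langle(b_0^2+b_1^2)\xi,\xi\rangle=\norm{b_0\xi}^2+\norm{b_1\xi}^2$) are correct, and the basepoint-constrained scheme for~$\DD$ is sound because the bad locus inside the space of diagonal values at~$0$ still has positive codimension there. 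What your route buys is self-containedness, in particular a direct treatment of~$\DD$, where the paper merely gestures at adapting Rieffel's argument; what it costs is that the general-position step for merely continuous matrix-valued functions must still be executed, e.g., by smooth or piecewise-linear approximation followed by stratified transversality (the bad loci are real algebraic), or by the dimension-theoretic fact that a closed subset of~$\RR^N$ of topological dimension at most $N-2$ does not locally separate~$\RR^N$, so that maps from the one-dimensional space $[0,1]$ avoiding it (rel a good basepoint value, in the case of~$\DD$) are dense. Note finally that once stable rank~$1$ is in hand, both real rank upper bounds follow at once from $\mathrm{RR}\les 2\,\mathrm{sr}-1$, which would let you skip the codimension-four half of the transversality argument entirely; that shortcut is exactly what the paper does.
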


\begin{proof}
It follows from Brown and Pedersen~\cite[Proposition~1.1]{BrPe} that~$\AA=\rC([0,1],\CC)$ has real rank~$1$, while, by Rieffel~\cite[Proposition~1.7]{Rief83}, $\AA$ has stable rank~$1$. By Rieffel~\cite[Theorem~3.3]{Rief83}, it follows that $\Mat_2(\AA)$ has also stable rank~$1$. By Brown and Pedersen~\cite[Proposition~1.2]{BrPe}, it follows that~$\Mat_2(\AA)$ has real rank~$1$ as well.

As~$\AA$ has stable rank~$1$, the argument of the second part of the proof of~\cite[Theorem~3.3]{Rief83} shows easily that~$\DD$ has stable rank~$1$. By~\cite[Proposition~1.2]{BrPe}, it follows that the real rank of~$\DD$ is either~$0$ or~$1$. As~$\AA$ is isomorphic to a hereditary subalgebra of~$\DD$, it follows from \cite[Corollary~2.8]{BrPe} that~$\DD$ has real rank~$1$.
\end{proof}

For each idempotent $e\in\DD^{(n)}_X$, the trace $\tr(e(x))$ is a nonnegative integer for each $x\in X$. As the map $\tr\circ\,e$ is continuous on~$X$, it follows that if~$X$ is connected (\emph{which will always be the case throughout this section}), then the value of~$\tr(e(x))$, for $x\in X$, is constant. We shall denote this value by~$\tr(e)$ and call it the trace of~$e$. Observe that the trace of~$e$ is equal to the rank of~$e(x)$, for each $x\in X$.

The following result is contained in Examples~1.4.2 and~5.1.3(c) of Blackadar~\cite{Black98}.

\begin{lemma}\label{L:VC[01]}
Let $a,b\in\RR$ with $a\leq b$. Then $\rV(\rC([a,b],\CC))\cong\ZZ^+$. Furthermore, for any positive integer~$n$, two idempotent elements of $\Mat_n(\rC([a,b],\CC))$ are equivalent if{f} they have the same trace.
\end{lemma}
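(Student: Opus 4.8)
The plan is to prove both assertions at once by showing that the (pointwise) trace induces a monoid isomorphism $\rV(\rC([a,b],\CC))\cong\ZZ^+$. First I would check that the trace is well defined on $\rV$. For $e\in\Idp_\infty(\rC([a,b],\CC))$ the function $s\mapsto\tr(e(s))$ is continuous and $\ZZ^+$-valued on the connected space $[a,b]$, hence constant; write $\tr(e)$ for its value. If $e\sim f$, say $e=xy$ and $f=yx$ with (as in Section~\ref{S:rV(R)}) $x=exf$ and $y=fye$, then $\tr(x(s)y(s))=\tr(y(s)x(s))$ at each point $s$ gives $\tr(e)=\tr(f)$. Since $\tr$ is also additive on orthogonal idempotents, it descends to a monoid homomorphism $\rV(\rC([a,b],\CC))\to\ZZ^+$. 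Surjectivity is immediate, as the constant idempotent $s\mapsto p_k$, where $p_k=\mathrm{diag}(1,\dots,1,0,\dots,0)$ has $k$ ones, has trace $k$.

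The substance of the proof is injectivity, which I would obtain by exploiting the contractibility of $[a,b]$. Given $e\in\Idp_n(\rC([a,b],\CC))=\rC([a,b],\Mat_n(\CC))$ of trace $k$, set $H(t,s):=e\bigl((1-t)s+ta\bigr)$ for $(t,s)\in[0,1]\times[a,b]$. Since $(1-t)s+ta$ always lies in $[a,b]$, each $H(t,\cdot)$ is again an idempotent of $\Mat_n(\rC([a,b],\CC))$, and $t\mapsto H(t,\cdot)$ is a norm-continuous path of idempotents joining $e=H(0,\cdot)$ to the constant idempotent $s\mapsto e(a)$. I would then invoke the standard fact that norm-close idempotents in a unital Banach algebra are similar, and subdivide the compact path into finitely many short steps, concluding that $e$ is similar, hence Murray–von Neumann equivalent, to the constant idempotent $e(a)$. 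Finally $e(a)\in\Mat_n(\CC)$ is an idempotent matrix of rank $\tr(e)=k$, thus similar over $\CC$ to $p_k$ by a constant invertible matrix; this exhibits $e\sim p_k$ inside $\Mat_n(\rC([a,b],\CC))$. Hence $[e]$ depends only on $k=\tr(e)$, giving injectivity and therefore $\rV(\rC([a,b],\CC))\cong\ZZ^+$ via the trace. The second assertion then follows, because $[e]=[f]$ means exactly $e\sim f$, so two idempotents of $\Mat_n(\rC([a,b],\CC))$ are equivalent if and only if they have the same trace.

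I expect the only real obstacle to be the passage from the homotopy $H$ to an actual equivalence, i.e.\ showing that a norm-continuous path of idempotents has similar endpoints. This rests on the elementary estimate that if $\norm{e-f}<1/\norm{2e-1}$ then $v:=ef+(1-e)(1-f)=1+(2e-1)(f-e)$ is invertible and satisfies $ev=vf$, whence $f=v^{-1}ev$; one then covers the path by finitely many such short arcs. Everything else is routine: trace invariance under equivalence, the rank classification of complex idempotent matrices, and the bookkeeping between $\Mat_\infty$ and $\Mat_n$, which is harmless here since replacing $x,y$ by $exf,fye$ keeps the implementing elements inside $\Mat_n$.
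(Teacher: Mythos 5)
Your proof is correct. Note that the paper does not actually prove this lemma: it cites Examples~1.4.2 and~5.1.3(c) of Blackadar~\cite{Black98}, where the result follows from identifying $\rV(\rC(X,\CC))$ with the monoid of isomorphism classes of complex vector bundles over~$X$ and invoking triviality of bundles over a contractible base. Your argument replaces that citation with a self-contained, elementary proof of exactly the facts underlying it: the pointwise trace is constant by connectedness, invariant under equivalence and additive on orthogonal idempotents, hence induces a surjective homomorphism $\rV(\rC([a,b],\CC))\to\ZZ^+$; and injectivity comes from contractibility of $[a,b]$ via the homotopy $H(t,s)=e\bigl((1-t)s+ta\bigr)$ together with the standard similarity lemma for norm-close idempotents. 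The delicate points are all in order: $v=ef+(1-e)(1-f)=1+(2e-1)(f-e)$ is invertible when $\norm{e-f}<1/\norm{2e-1}$ (and $\norm{2e-1}\geq 1$ since $(2e-1)^2=1$) and satisfies $ev=ef=vf$, so $f=v^{-1}ev$; the subdivision works because $t\mapsto H(t,\cdot)$ is norm-continuous on the compact interval $[0,1]$ (by uniform continuity of~$e$) and $\sup_t\norm{2H(t,\cdot)-1}$ is finite, giving a uniform step size; the constant idempotent $e(a)$ has rank $\tr(e)$ and is conjugate to $p_k$ by a constant invertible matrix; and, as the paper observes in Section~\ref{S:rV(R)}, Murray--von Neumann equivalence of idempotents of $\Mat_n$ can always be implemented by elements of $\Mat_n$, so there is no discrepancy between equivalence in $\Mat_n$ and in $\Mat_\infty$. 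What your route buys is independence from the Serre--Swan/vector-bundle machinery behind the cited examples; what the citation buys is brevity.
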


\begin{lemma}\label{L:eequivrhonXe}
Let $e\in\DD^{(n)}_{[0,1]}$ be a projection. Then~$e$ and~$e(0)$ are unitarily equivalent in $\DD^{(n)}_{[0,1]}$.
\end{lemma}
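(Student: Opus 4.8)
The plan is to connect the projection $e$ to its value $e(0)$ by a norm-continuous path of projections lying entirely inside $\DD^{(n)}_{[0,1]}$, and then to invoke the standard principle that homotopic projections in a unital C*-algebra are unitarily equivalent. First I would record that $e(0)$, regarded as a constant function, genuinely belongs to $\DD^{(n)}_{[0,1]}$: writing $e=\begin{pmatrix}a&b\\ c&d\end{pmatrix}$ with $a,d\in\AA^{(n)}_{[0,1]}$ and $b,c\in\II^{(n)}_{[0,1]}$, evaluation at the distinguished point kills the off-diagonal blocks (since $b(0)=c(0)=0$), so $e(0)=\begin{pmatrix}a(0)&0\\ 0&d(0)\end{pmatrix}$ is a constant block-diagonal matrix, hence lies in $\ol{\DD}^{(n)}_{[0,1]}\subseteq\DD^{(n)}_{[0,1]}$. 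As $\DD^{(n)}_{[0,1]}$ is unital, unitary equivalence inside it makes sense.

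Next I would introduce the rescaling homotopy $e_s:=\bigl(t\mapsto e(st)\bigr)$ for $s\in[0,1]$, so that $e_1=e$ and $e_0=e(0)$ (the constant function). Each $e_s$ is a projection because evaluation preserves the relations $x^2=x=x^*$ pointwise, and — this is the one genuinely local point — each $e_s$ still lies in $\DD^{(n)}_{[0,1]}$, since its off-diagonal blocks $t\mapsto b(st)$ and $t\mapsto c(st)$ vanish at $t=0$, hence belong to $\II^{(n)}_{[0,1]}$. Uniform continuity of $e$ on the compact interval gives $\norm{e_s-e_{s'}}=\sup_t\norm{e(st)-e(s't)}\to0$ as $s'\to s$ (using $\lvert st-s't\rvert\le\lvert s-s'\rvert$), so $s\mapsto e_s$ is norm-continuous.

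Finally I would conclude by the homotopy argument. Choose a partition $0=s_0<\cdots<s_m=1$ fine enough that $\norm{e_{s_i}-e_{s_{i+1}}}<1$ for each~$i$. For projections $p,q$ in any unital C*-algebra with $\norm{p-q}<1$, the element $v:=qp+(1-q)(1-p)$ satisfies $vp=qv$ and $1-v^*v=p(1-q)p+(1-p)q(1-p)$, a sum of orthogonal positives each of norm $\le\norm{p-q}^2<1$; hence $v$ is invertible, $v^*v$ commutes with~$p$, and the unitary part $u:=v(v^*v)^{-1/2}$ satisfies $upu^*=q$. Crucially, $u$ is obtained from $p$, $q$, $1$ by sums, products, and continuous functional calculus of an invertible positive element, so it stays in the C*-subalgebra $\DD^{(n)}_{[0,1]}$. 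Applying this to each consecutive pair $(e_{s_i},e_{s_{i+1}})$ and composing the resulting unitaries yields a single unitary $u\in\DD^{(n)}_{[0,1]}$ with $ue(0)u^*=e$, which is the desired unitary equivalence.

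The main, and essentially only, obstacle is the subalgebra constraint: a priori the natural conjugating unitaries live in the larger algebra $\AA^{(2n)}_{[0,1]}$, and one must ensure they remain inside $\DD^{(n)}_{[0,1]}$. Both ingredients are arranged precisely to respect this — the homotopy has off-diagonal blocks vanishing at~$0$, and the close-projection unitaries are built by algebraic operations and functional calculus internal to the C*-subalgebra. Everything else is routine and could alternatively be quoted from \cite{RLLa}.
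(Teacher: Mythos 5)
Your proof is correct, but it takes a genuinely different route from the paper's. The paper argues by splitting the interval: continuity of~$e$ at~$0$ gives an $\eps$ with $\norm{e(t)-e(0)}\leq 1/2$ on $[0,\eps]$, so $e\res_{[0,\eps]}$ and the constant projection are unitarily equivalent inside $\DD^{(n)}_{[0,\eps]}$ by the close-projection principle; after conjugating so that $e$ is constant on $[0,\eps]$, it works on $[\eps,1]$, where the ambient algebra is the full matrix algebra $\Mat_{2n}(\rC([\eps,1],\CC))$, and invokes Lemma~\ref{L:VC[01]} (projections over an interval are classified by their trace) to get a second unitary, finally gluing the two unitaries by constant extension. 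You instead use one global homotopy: the rescaled projections $e_s(t):=e(st)$ form a norm-continuous path from $e(0)$ to $e$ that stays inside $\DD^{(n)}_{[0,1]}$ precisely because the rescaling fixes the distinguished point, and you chain the standard unitaries $u=v(v^*v)^{-1/2}$, $v=qp+(1-q)(1-p)$ between consecutive close projections, noting they remain in the subalgebra since they are built from $p$, $q$, $1$ by algebraic operations and functional calculus. What each approach buys: yours dispenses with Lemma~\ref{L:VC[01]} altogether (in the paper that lemma is stated only to serve this proof), exploiting instead the contractibility of the pointed interval; it also avoids the delicate point of the gluing step, namely that a constantly extended unitary belongs to $\DD^{(n)}_{[0,1]}$ only if its value at the gluing point has vanishing off-diagonal blocks, which requires first normalizing $v$ (e.g., replacing it by $v\,v(\eps)^*$, legitimate since $v(\eps)$ commutes with $e(0)$) --- a point the paper's proof passes over silently. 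The paper's version, in exchange, is shorter once Lemma~\ref{L:VC[01]} is available and needs only two conjugations rather than a chain. One detail you should spell out: your displayed identity shows $\norm{1-v^*v}<1$, hence $v^*v$ is invertible and $v$ is left invertible; invertibility of $v$ itself needs the symmetric computation $1-vv^*=q(1-p)q+(1-q)p(1-q)$ (or a citation to \cite{RLLa}). This is routine and does not affect correctness.
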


\begin{proof}
Set $\ol{e}:=e(0)$. The common value $r:=\tr(e)=\tr(\ol{e})$ is an integer with $0\leq r\leq 2n$. As~$e$ is continuous at~$0$, there is $\eps\in(0,1]$ such that $\norm{e(t)-e(0)}\leq 1/2$ for each $t\in[0,\eps]$. Thus we obtain the inequality (between elements of $\DD^{(n)}_{[0,\eps]}$)
 \[
 \norm{e\res_{[0,\eps]}-\ol{e}\res_{[0,\eps]}}\leq 1/2\,.
 \]
As $e\res_{[0,\eps]}$ and $\ol{e}\res_{[0,\eps]}$ are both projections of $\DD^{(n)}_{[0,\eps]}$, they are unitarily equivalent in $\DD^{(n)}_{[0,\eps]}$, that is, there exists a unitary $u\in\DD^{(n)}_{[0,\eps]}$ such that
 \[
 e\res_{[0,\eps]}=u\ol{e}\res_{[0,\eps]}u^*\,.
 \]
Extend~$u$ to a unitary element of $\DD^{(n)}_{[0,1]}$ by setting $u(t):=u(\eps)$ for each $t\in[\eps,1]$. By replacing~$e$ by $u^*eu$, we may thus assume the following:
 \begin{equation}\label{Eq:ecstat0}
 e(t)=e(0)\quad\text{for each }t\in[0,\eps]\,. 
 \end{equation}
As $e\res_{[\eps,1]}$ and $\ol{e}\res_{[\eps,1]}$ are projections of $\Mat_{2n}(\rC([\eps,1],\CC))$ with the same rank~$r$, it follows from Lemma~\ref{L:VC[01]} that they are unitarily equivalent in $\Mat_{2n}(\rC([\eps,1],\CC))$, so there exists a unitary $v\in\Mat_{2n}(\rC([\eps,1],\CC))$ such that
 \begin{equation}\label{Eq:efromebar}
 e\res_{[\eps,1]}=v\ol{e}\res_{[\eps,1]}v^*\,.
 \end{equation}
Extend~$v$ to a unitary element of $\DD^{(n)}_{[0,1]}$ by setting $v(t):=v(\eps)$ for each $t\in[0,\eps]$. By putting~\eqref{Eq:ecstat0} and~\eqref{Eq:efromebar} together, we obtain easily that $e=v\ol{e}v^*$.
\end{proof}

As in Section~\ref{S:Collapse}, we set $c_0:=\begin{pmatrix}1&0\\ 0&0\end{pmatrix}$ and $c_1:=1-c_0=\begin{pmatrix}0&0\\ 0&1\end{pmatrix}$. By using the canonical isomorphism $\Mat_n(\DD)\cong\DD^{(n)}_{[0,1]}$ together with Lemma~\ref{L:eequivrhonXe}, we obtain the following description of the nonstable $\rK_0$-theory of~$\DD$.

\begin{corollary}\label{C:V(B1[01]}
Denote by $\rho\colon C\onto\ol{\DD}$, $x\mapsto x(0)$ the canonical retraction. Then $\rV(\rho)\colon\rV(\DD)\to\rV(\ol{\DD})\cong\ZZ^+\times\ZZ^+$ is an isomorphism, and $\set{[c_0]_C,[c_1]_C}$ is a simplicial basis of~$\rV(\DD)$.
\end{corollary}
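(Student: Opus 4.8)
The plan is to realize $\rV(\rho)$ as one half of a pair of mutually inverse isomorphisms, the other half being induced by the inclusion of constant functions. Let $\iota\colon\ol{\DD}\into\DD$ denote the embedding sending a constant block-diagonal matrix of $\ol{\DD}$ to the corresponding constant function in~$\DD$ (this is well defined since the value of such a function at~$0$ is block-diagonal, so it indeed lies in~$\DD$). Evaluating a constant function at~$0$ returns that function's value, so $\rho\circ\iota=\id_{\ol{\DD}}$; applying the functor~$\rV$ gives $\rV(\rho)\circ\rV(\iota)=\id_{\rV(\ol{\DD})}$, which already yields surjectivity of~$\rV(\rho)$ for free. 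Note that we cannot shortcut the injectivity via Proposition~\ref{P:V(R/I)}, since $\DD$ has real rank~$1$ (Proposition~\ref{P:RRSRAC}) and is therefore \emph{not} an exchange ring; the content of the corollary is exactly that the kernel of~$\rho$ contributes nothing to~$\rV$, and this is where Lemma~\ref{L:eequivrhonXe} enters.

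For the reverse composite I would first reduce to projections: every idempotent of $\Mat_n(\DD)\cong\DD^{(n)}_{[0,1]}$ is equivalent to a projection, and under the identification $\Mat_n(\DD)\cong\DD^{(n)}_{[0,1]}$ the map $\Idp_\infty(\rho)$ becomes evaluation at~$0$. Given a projection $e\in\DD^{(n)}_{[0,1]}$, Lemma~\ref{L:eequivrhonXe} asserts that $e$ is unitarily equivalent, hence Murray--von Neumann equivalent, to the constant projection~$e(0)$; and $e(0)$ is precisely $\iota(\rho(e))$ regarded inside $\DD^{(n)}_{[0,1]}$. Consequently $\rV(\iota)\bigl(\rV(\rho)([e])\bigr)=[e(0)]=[e]$ for every $[e]\in\rV(\DD)$, so $\rV(\iota)\circ\rV(\rho)=\id_{\rV(\DD)}$ as well. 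Thus $\rV(\rho)$ and $\rV(\iota)$ are mutually inverse, whence $\rV(\rho)$ is an isomorphism.

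It then remains to compute the target and locate the basis. Since $\ol{\DD}\cong\CC\times\CC$ and~$\rV$ preserves finite products (Section~\ref{S:rV(R)}), we obtain $\rV(\ol{\DD})\cong\rV(\CC)\times\rV(\CC)\cong\ZZ^+\times\ZZ^+$, the two copies of $[1_\CC]$ being the standard simplicial basis. Under~$\rho$ the constant idempotents $c_0$ and $c_1$ map to themselves in~$\ol{\DD}$, representing $(1,0)$ and $(0,1)$ respectively; transporting this basis back along the isomorphism (equivalently, applying $\rV(\iota)$) shows that $\set{[c_0]_{\DD},[c_1]_{\DD}}$ is a simplicial basis of~$\rV(\DD)$.

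The argument is in essence a single application of Lemma~\ref{L:eequivrhonXe}, so there is no serious difficulty. The one point deserving care is the translation of that lemma's \emph{unitary} equivalence into the abstract monoid identity $\rV(\iota)\circ\rV(\rho)=\id$: this rests on the standard facts recalled in Section~\ref{S:rV(R)} that every idempotent of a C*-algebra is equivalent to a projection and that unitarily equivalent projections represent the same class in~$\rV$ (explicitly, $e=u\,e(0)\,u^{*}$ gives $e\simst e(0)$ via $x=u\,e(0)$).
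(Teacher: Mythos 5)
Your proposal is correct and follows essentially the same route as the paper: the paper's (very terse) proof consists precisely of invoking the canonical isomorphism $\Mat_n(\DD)\cong\DD^{(n)}_{[0,1]}$ together with Lemma~\ref{L:eequivrhonXe}, which is exactly what you do, merely spelled out in full via the section--retraction pair $(\iota,\rho)$ and the standard passage from unitary equivalence of projections to equality of classes in~$\rV$. Your added remark that Proposition~\ref{P:V(R/I)} is unavailable (since $\DD$ has real rank~$1$ and hence is not an exchange ring) is accurate and consistent with why the paper routes the argument through Lemma~\ref{L:eequivrhonXe}.
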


In particular, idempotent matrices $a,b\in\Idp_\infty(\DD)$ are equivalent in $\Mat_\infty(\DD)$ if{f} $a+\Mat_\infty(\JJ)$ and $b+\Mat_\infty(\JJ)$ are equivalent in $\Mat_\infty(\DD/\JJ)$. Observe that $\DD/\JJ\cong\CC^2$.

\begin{theorem}\label{T:LuLiftC*}
The diagram~$\vLu$ has a lifting, with respect to the functor $\rVu$, by unital C*-subalgebras of~$\Mat_2(\rC([0,1],\CC))$, in such a way that the equations~\textup{\eqref{Eq:EqnsonvL}} are satisfied and~$s$ and~$v$ are both involutive automorphisms.
\end{theorem}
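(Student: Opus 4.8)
The plan is to transport the construction of Theorem~\ref{T:LuLift} into the C*-world, replacing the exchange ring $C$ by the C*-algebra $\DD$ and $\Mat_2(K(\xt))$ by $\Mat_2(\AA)$. Concretely, I would set $A:=\CC$, $B:=\CC^2$, $C:=\DD$, and $D:=\Mat_2(\AA)=\Mat_2(\rC([0,1],\CC))$, realizing $A$ and $B$ as the scalar and the constant-diagonal matrices inside $D$, and $C=\DD$ as its C*-subalgebra; the role of the involution $\sigma$ of $K(\xt)$ is now played by the involutive $*$-automorphism $\sigma$ of $\AA$ induced by the reflection $t\mapsto 1-t$, i.e.\ $(\sigma x)(t):=x(1-t)$. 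I define $v\colon D\to D$ by $v\begin{pmatrix}x_0&x_1\\x_2&x_3\end{pmatrix}:=\begin{pmatrix}\sigma x_3&\sigma x_2\\\sigma x_1&\sigma x_0\end{pmatrix}$, which is an involutive $*$-automorphism, take $h\colon C=\DD\hookrightarrow D$ to be the inclusion, and keep $e\colon A\to B$, $x\mapsto(x,x)$ and $s\colon B\to B$, $(x,y)\mapsto(y,x)$. Since $\JJ=\Mat_2(\II)$ satisfies $\DD/\JJ\cong\CC^2$ and, by Corollary~\ref{C:V(B1[01]}, $\rV(\DD)\cong\ZZ^+\times\ZZ^+$ with simplicial basis $\set{[c_0]_C,[c_1]_C}$, the objects and most of the maps are immediate C*-analogues of those in Theorem~\ref{T:LuLift}.

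The heart of the argument, exactly as in the exchange case, is to produce the single element $c$ feeding the embedding $g$. Here, however, for $g(x,y):=xc+y(1-c)$ to be a unital $*$-homomorphism of $\CC^2$ into $\DD$, the element $c$ must be a genuine \emph{projection}, not merely an idempotent; this is the one extra constraint relative to~\eqref{Eq:MatrixcExch}. Thus I must find $c\in\DD$ with $c=c^*=c^2$, with $c\equiv c_0\pmod{\JJ}$ (equivalently $c(0)=c_0(0)$, which by Corollary~\ref{C:V(B1[01]} yields $[c]_C=[c_0]_C$ and $[1-c]_C=[1-c_0]_C$, the C*-analogue of~\eqref{Eq:cequivc0}), and with $v(c)=c$, the analogue of~\eqref{Eq:v(c)=c}. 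Writing out these conditions for $c=\begin{pmatrix}p&q\\\ol{q}&\sigma p\end{pmatrix}$ forces $p(t)+p(1-t)=1$ together with $\lvert q\rvert^2=p(1-p)$ and $q(t)=\ol{q(1-t)}$; the choice $p(t):=1-t$, $q(t):=\sqrt{t(1-t)}$ solves them, giving the explicit projection
 \[
 c:=\begin{pmatrix}1-t & \sqrt{t(1-t)}\\ \sqrt{t(1-t)} & t\end{pmatrix}\in\DD\,,
 \]
whose off-diagonal entries lie in $\II$ and which satisfies $c(0)=c_0$ and $v(c)=c$. I would then set $f(x,y):=xc_0+y(1-c_0)$ and $g(x,y):=xc+y(1-c)$, both unital $*$-homomorphisms from $\CC^2$ to $\DD$.

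Finally I would verify the equations~\eqref{Eq:EqnsonvL} and the lifting of $\vLu$, both of which run as in the proof of Theorem~\ref{T:LuLift}: $se=e$ and $fe=ge$ (both equal $x\mapsto x1_C$) are immediate, $hfs=vhf$ follows from $v(c_0)=1-c_0$, and $vhg=hg$ follows from $v(c)=c$. For the $\rV$-computation, Corollary~\ref{C:V(B1[01]} identifies $\rV(C)$, while Lemma~\ref{L:VC[01]} gives $\rV(D)=\rV(\Mat_2(\AA))\cong\ZZ^+$ with equivalence of projections detected by the trace, so that $[c_0]_D=[c_1]_D$ and hence $\rV(h)=\bh$; the remaining identities $\rV(e)=\be$, $\rV(f)=\rV(g)=\id$ (the latter using $[c]_C=[c_0]_C$), $\rV(s)=\bs$, and $\rV(v)=\id$ are read off exactly as before. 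The genuinely new content is therefore confined to the explicit projection $c$; the main obstacle is precisely the self-adjointness requirement, which rules out reusing~\eqref{Eq:MatrixcExch} and forces the ``rotating'' projection above, and one should note that the bound $0\le p\le1$ (ensuring $p(1-p)\ge0$) is exactly what makes a continuous square root $q$ available.
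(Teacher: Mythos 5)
Your proposal is correct and follows essentially the same route as the paper's proof: the same choice of algebras $A=\CC$, $B=\CC^2$, $C=\DD$, $D=\Mat_2(\AA)$, the same involutions $\sigma$ and $v$, and exactly the same projection $c=\begin{pmatrix}1-z & \sqrt{z(1-z)}\\ \sqrt{z(1-z)} & z\end{pmatrix}$ (the paper writes $z$ for your coordinate function $t$), with the identical verification of~\eqref{Eq:EqnsonvL} and of the $\rV$-computations via Corollary~\ref{C:V(B1[01]} and Lemma~\ref{L:VC[01]}. The only difference is presentational: you derive $c$ from the constraints (self-adjointness, $v$-invariance, $c(0)=c_0$), whereas the paper simply exhibits it and checks the properties.
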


\begin{proof}
The argument is similar to the one of the proof of Theorem~\ref{T:LuLift}. The role played by $K(\xt)$ in that proof is taken up here by~$\AA:=\rC([0,1])$. We denote by~$\sigma$ the automorphism of~$\AA$ defined by
 \[
 \sigma(x)(t):=x(1-t)\,,\quad\text{for each }x\in A\text{ and each }t\in[0,1]\,.
 \]
We set $C:=\DD$, $D:=\Mat_2(\AA)$, and we denote by~$v$ the involutive automorphism of~$\Mat_2(\AA)$ defined by the rule
 \[
 v\begin{pmatrix}x_0&x_1\\ x_2&x_3\end{pmatrix}:=
 \begin{pmatrix}\sigma x_3&\sigma x_2\\
 \sigma x_1&\sigma x_0\end{pmatrix}\,,\quad
 \text{for all }x_0,x_1,x_2,x_3\in A\,.
 \]
Set $B:=\CC\times\CC$, $b_0:=(1,0)$, and $b_1:=(0,1)$. Then the map $s\colon B\to B$, $(x,y)\mapsto(y,x)$ is an involutive C*-automorphism of~$B$, which switches~$b_0$ and~$b_1$. Furthermore, $\rV(B)\cong\ZZ^+\times\ZZ^+$ with simplicial basis $\set{[b_0]_B,[b_1]_B}$.
Denote by $z\colon[0,1]\into\CC$ the inclusion map. The matrix
 \[
 c:=\begin{pmatrix}1-z & \sqrt{z(1-z)}\\
 \sqrt{z(1-z)} & z\end{pmatrix}
 \]
is a projection of~$\DD$. As $c(0)=\begin{pmatrix}1&0\\ 0&0\end{pmatrix}=c_0(0)$, that is, $c\equiv c_0\pmod{\JJ}$, it follows from Corollary~\ref{C:V(B1[01]} that
 \[
 [c]_C=[c_0]_C\quad\text{and}\quad [1-c]_C=[1-c_0]_C\,.
 \]
Furthermore, an elementary calculation yields the second key property of~$c$, namely
 \begin{equation}\label{Eq:v(c)=c2}
 v(c)=c\,.
 \end{equation}
Therefore, defining~$h$ as the inclusion map from~$\DD$ into~$\Mat_2(\AA)$ and defining \Cemb s $f\colon\CC^2\into\DD$ and $g\colon\CC^2\into\DD$ by the rules
 \[
 f(x,y):=xc_0+y(1-c_0)\quad\text{and}\quad
 g(x,y):=xc+y(1-c)\,,\quad\text{for all }x,y\in K\,,
 \]
we obtain the equations $hfs=vhf$ and (using~\eqref{Eq:v(c)=c2}) $vhg=hg$.

Finally, let $e\colon\CC\into B$, $x\mapsto(x,x)$. Obviously, $se=e$ and $fe=ge$. We obtain a diagram~$\vR$ as on Figure~\ref{Fig:LiftL}, with~$\CC$ in place of~$A$. Verifying that~$\vR$ lifts~$\vLu$, with respect to the abovementioned choices of simplicial bases, is done in a similar fashion as at the end of the proof of Theorem~\ref{T:LuLift}.
\end{proof}

The ``unfolding'' technique used in the proof of Proposition~\ref{P:NoLiftLu} yields then immediately the following consequence of Theorem~\ref{T:LuLiftC*}.

\begin{proposition}\label{P:cDliftedC*}
The diagram~$\vDu$ can be lifted, with respect to the functor $\rVu$, by a commutative diagram of unital C*-algebras and unital C*-homomorphisms.
\end{proposition}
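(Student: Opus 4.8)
The plan is to run the ``unfolding'' procedure from the proof of Proposition~\ref{P:NoLiftLu} in the forward direction, applying it to the positive lifting of~$\vLu$ furnished by Theorem~\ref{T:LuLiftC*}. In Proposition~\ref{P:NoLiftLu} that procedure was used to derive a contradiction, but the mechanism itself is constructive: it turns any lifting~$\vR$ of~$\vL$ obeying the relations~\eqref{Eq:EqnsonvL} into a commutative cube lifting~$\vD$. First I would take~$\vR$ as in Figure~\ref{Fig:LiftL}, with objects the unital C*-algebras $A=\CC$, $B=\CC^2$, $C=\DD$, $D=\Mat_2(\AA)$ and morphisms $e,f,g,s,h,v$ the unital C*-homomorphisms produced by Theorem~\ref{T:LuLiftC*}; these satisfy~\eqref{Eq:EqnsonvL}, and their images under~$\rV$ are $\be,\id,\id,\bs,\bh,\id$ respectively.

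From~$\vR$ I would assemble the $\set{0,1}^3$-indexed diagram of Figure~\ref{Fig:CubicLift}: one copy of~$A$ at the bottom, three copies of~$B$ for the three vertices~$\bA_j$, three copies of~$C$ for the three vertices~$\bB_j$, and~$D$ at the top, with the bottom edges all labeled~$e$, the six middle edges labeled by~$g$, $f$, and the single occurrence of~$f\circ s$, and the top edges labeled~$h$, $v\circ h$, $v\circ h$. The crucial verification is that this really is a \emph{commutative} diagram indexed by the cube, and this is exactly where the four relations of~\eqref{Eq:EqnsonvL} are consumed: $f\circ e=g\circ e$ and $s\circ e=e$ force the faces meeting~$A$ to commute, while $h\circ f\circ s=v\circ h\circ f$ and $v\circ h\circ g=h\circ g$ force the faces meeting~$D$ to commute. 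Since Theorem~\ref{T:LuLiftC*} guarantees all four relations, the unfolded cube commutes.

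It then remains to check that~$\rVu$ sends this cube to~$\vDu$ up to natural equivalence. Using the recorded values of~$\rV$, every bottom edge maps to~$\be$, every middle edge labeled~$f$ or~$g$ maps to~$\id$, the lone edge labeled~$f\circ s$ maps to $\rV(f)\circ\rV(s)=\bs$, and every top edge maps to $\rV(h)=\bh$ or $\rV(v)\circ\rV(h)=\bh$; matching these against Figure~\ref{Fig:DiagrD} recovers exactly the edges of~$\vD$, with the single~$\bs$ landing on the edge issuing from the middle copy of~$B$ (the vertex~$\bA_1$). Because~$\vR$ lifts~$\vLu$ with respect to the \emph{pointed} functor~$\rVu$, the order-units transport automatically ($[1_A]=1$, $[1_B]=(1,1)$, $[1_C]=(1,1)$, $[1_D]=2$), so the unfolded diagram lifts~$\vDu$ rather than merely~$\vD$. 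Finally, all objects are unital C*-algebras and all morphisms are unital C*-homomorphisms (the embeddings $f,g$ carry the unit to~$1$, $h$ is a unital inclusion, $e$ is unital, and $s,v$ are unital automorphisms), which yields the claim.

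I expect no genuine difficulty here beyond careful bookkeeping: there is no fresh analytic or C*-algebraic input past Theorem~\ref{T:LuLiftC*}, and the only thing that could go wrong is mislabeling the three copies of~$B$ and~$C$ so that the symmetry~$\bs$ of~$\vD$ fails to align with the edge~$f\circ s$. Once the wiring of Figure~\ref{Fig:CubicLift} is matched vertex-by-vertex to Figure~\ref{Fig:DiagrD}, the result is immediate.
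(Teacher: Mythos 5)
Your proposal is correct and is exactly the paper's own proof: the paper disposes of Proposition~\ref{P:cDliftedC*} in a single sentence, namely that the ``unfolding'' technique from the proof of Proposition~\ref{P:NoLiftLu}, applied to the lifting of~$\vLu$ furnished by Theorem~\ref{T:LuLiftC*}, yields the result immediately. Your explicit verification that the four relations of~\eqref{Eq:EqnsonvL} make the cube of Figure~\ref{Fig:CubicLift} commute, and that its image under~$\rVu$ matches~$\vDu$ edge by edge, merely spells out the bookkeeping the paper leaves implicit.
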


The algebra~$\DD$ can also be applied to find a lifting of the diagram~$\vK$ represented in Figure~\ref{Fig:OneLoop}.

\begin{proposition}\label{P:vecCLift}
The diagram $\vK$ can be lifted, with respect to the functor~$\rV$, by a diagram of unital C*-algebras and unital C*-homomorphisms such that, labeling the lifting as on Figure~\textup{\ref{Fig:OneLoop}}, $h\circ s=h$.
\end{proposition}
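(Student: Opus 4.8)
The plan is to lift $\vK$ using the algebra $\DD$ of Example~\ref{Ex:semiprnotV}, which is the natural candidate for~$R$: being a C*-algebra of real rank~$1$ that fails to be weakly V-semiprimitive, it evades the obstruction of Theorem~\ref{T:FirstNonLift}. I would set $R:=\DD$ and $S:=\Mat_2(\CC)$, take for $h$ the evaluation $\mathrm{ev}_1\colon\DD\onto\Mat_2(\CC)$, $x\mapsto x(1)$, and realize the loop~$s$ as an inner automorphism $s(x):=uxu^*$ of~$\DD$ implemented by a continuous unitary path $u\colon[0,1]\to U(2)$. Recall from Corollary~\ref{C:V(B1[01]} that $\rV(\DD)\cong\ZZ^+\times\ZZ^+$ with simplicial basis $\set{[c_0]_\DD,[c_1]_\DD}$, the class $[e]_\DD$ of a projection being detected by the diagonal matrix $e(0)\in\ol{\DD}$.

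The delicate point, and the main obstacle, is the choice of~$s$. The obvious guess, conjugation by the constant swap $w:=\begin{pmatrix}0&1\\1&0\end{pmatrix}$, does yield $\rV(s)=\bs$, but it is flatly incompatible with $h\circ s=h$: indeed $s(c_0)-c_0=c_1-c_0=\begin{pmatrix}-1&0\\0&1\end{pmatrix}$ is invertible in~$\DD$, so the closed two-sided ideal generated by $\setm{s(x)-x}{x\in\DD}$ is all of~$\DD$, whence any \emph{unital} $h$ with $h\circ s=h$ would have to vanish. The resolution is to let~$u$ agree with~$w$ only at the endpoint $t=0$, where the $\rV$-class is read off, while being trivial at the point where~$h$ evaluates. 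Concretely I would choose~$u$ continuous with $u(0)=w$ and $u(1)=1$, which is possible since $U(2)$ is path-connected.

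With this choice the verification is routine. First, $s$ is a well-defined unital $*$-automorphism of~$\DD$: for $x\in\DD$ the matrix $x(0)$ is diagonal, and conjugation by~$w$ preserves diagonal matrices, so $s(x)(0)=wx(0)w$ is again diagonal and the off-diagonal entries of $s(x)$ still vanish at~$0$; thus $s(\DD)\subseteq\DD$, and likewise for $s^{-1}$. Second, $\rV(s)=\bs$: since $u(0)=w$ interchanges the two diagonal coordinates, $s(c_0)(0)=c_1(0)$ and $s(c_1)(0)=c_0(0)$, so by Corollary~\ref{C:V(B1[01]} the map $\rV(s)$ swaps $[c_0]$ and $[c_1]$. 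Third, $\rV(h)=\bh$: by Lemma~\ref{L:VC[01]} and Morita invariance $\rV(\Mat_2(\CC))\cong\ZZ^+$ with generator the class of a rank-$1$ projection, and $c_0(1),c_1(1)$ are both rank~$1$, whence $\rV(h)([c_0])=\rV(h)([c_1])=1$. Finally $h\circ s=h$ holds on the nose, because $s(x)(1)=u(1)x(1)u(1)^*=x(1)$.

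Assembling these data into the labeled diagram~$\vR$ of Figure~\ref{Fig:OneLoop} then exhibits a lifting of~$\vK$, with respect to the functor~$\rV$, by unital C*-algebras and unital C*-homomorphisms satisfying $h\circ s=h$, as required. The construction is consistent with the non-lifting Theorem~\ref{T:FirstNonLift} precisely because~$\DD$ is not weakly V-semiprimitive; this is the same mechanism, namely the failure of V-semiprimitivity of~$\DD$, that powered the positive result of Theorem~\ref{T:LuLiftC*}.
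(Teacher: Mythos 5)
Your proposal is correct and follows essentially the same route as the paper: $R:=\DD$, $S:=\Mat_2(\CC)$, $h$ the evaluation at~$1$, and $s$ the conjugation by a unitary path in $\Mat_2(\rC([0,1],\CC))$ that swaps the diagonal coordinates at $t=0$ and is the identity at $t=1$, with $\rV(s)=\bs$ read off at~$0$ via Corollary~\ref{C:V(B1[01]}. The only difference is cosmetic: the paper writes the path explicitly as the rotation $\gamma(t)=\begin{pmatrix}\sin((\pi/2)t)&\cos((\pi/2)t)\\ -\cos((\pi/2)t)&\sin((\pi/2)t)\end{pmatrix}$, whereas you invoke path-connectedness of $U(2)$ to join the swap matrix to the identity (note only that, since your $u(0)$ is not diagonal, $s$ is not inner in $\DD$ itself but is the restriction of an inner automorphism of $\Mat_2(\rC([0,1],\CC))$, exactly as in the paper).
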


\begin{proof}
We shall define unital C*-algebras~$R$ and~$S$ with unital C*-homomorphisms $s\colon R\to R$ and $h\colon R\to S$ such that $h\circ s=h$ and, denoting by~$\vR$ the diagram represented in the right hand side of Figure~\ref{Fig:OneLoop}, $\rV\vR\cong\vK$.

Set $R:=\DD$ and $S:=\Mat_2(\CC)$, with the already introduced~$c_0$ and~$c_1$. We set
 \[
 \gamma(t):=\begin{pmatrix}\sin((\pi/2)t)&\cos((\pi/2)t)\\
 -\cos((\pi/2)t)&\sin((\pi/2)t)\end{pmatrix}\,,
 \quad\text{for each }t\in[0,1]\,.
 \]
Observe that $\gamma$ is a unitary element of $\Mat_2(\AA)$ and $\gamma\notin\DD$. Furthermore, set $s(x):=\gamma\cdot x\cdot\gamma^*$, for each $x\in\DD$. Then
 \[
 s(c_0)(0)=\begin{pmatrix}0&1\\ -1&0\end{pmatrix}
 \begin{pmatrix}1&0\\ 0&0\end{pmatrix}
 \begin{pmatrix}0&-1\\ 1&0\end{pmatrix}=c_1\,,
 \]
and, similarly, $s(c_1)(0)=c_0$. It follows that~$s$ is an automorphism of~$\DD$, and, by Lemma~\ref{L:eequivrhonXe}, $s(c_0)\sim c_1$ and $s(c_1)\sim c_0$ in~$\DD$. Hence, $\rV(s)$ switches $[c_0]_\DD$ and $[c_1]_\DD$. Observe that by Corollary~\ref{C:V(B1[01]}, $\rV(\DD)$ is simplicial with simplicial basis $\set{[c_0]_C,[c_1]_C}$.

Now let $h\colon C\to\Mat_2(\CC)$, $x\mapsto x(1)$. Then $h(c_j)=c_j$ for each $j\in\set{0,1}$. Furthermore, from $\gamma(1)=1$ it follows that for each $x\in\DD$,
 \[
 (h\circ s)(x)=h(\gamma\cdot x\cdot\gamma^*)=
 \gamma(1)\cdot x(1)\cdot\gamma(1)^*=x(1)=h(x)\,,
 \]
so $h\circ s=h$, and so~$s$ and~$h$ are as desired.
\end{proof}

We do not know whether it is possible to ensure simultaneously $h\circ s=h$ and $s^2=\id_R$ in a lifting of~$\vK$, with respect to the functor~$\rV$, by unital C*-algebras.

\section{Back to the transfinite: an exchange ring of cardinality $\aleph_3$}\label{S:Al3}

Our first non-lifting result, Proposition~\ref{P:NonSplit}, started with a counterexample of cardinality~$\aleph_2$. We applied the combinatorial core of that result in Theorem~\ref{T:NoLift}. Now the cycle swings back on itself: we shall get further negative representation results, algebraically stronger than those that we used in order to get Proposition~\ref{P:NonSplit}, now in cardinality~$\aleph_3$ (the present form of Theorem~\ref{T:NoLift} would not yield~$\aleph_2$).

Throughout this section we fix a field~$K$, and we denote by~$\Ktzero$ (cf.~\eqref{Eq:defC0}) and $C_K:=\begin{pmatrix}\Ktzero&\xt\Ktzero\\ \xt\Ktzero&\Ktzero\end{pmatrix}$ (cf.~\eqref{Eq:defC}) the $K$-algebras introduced in the proof of Theorem~\ref{T:LuLift}. We also denote by~$\xA_K$ the closure, under finite direct products and direct limits (of unital $K$-algebras), of the set
 \begin{equation}\label{Eq:DefncR}
 \cR_K:=\set{K}\cup\set{C_K}\cup\set{\Mat_2(K(\xt))}\,.
 \end{equation}
As each member of~$\cR_K$ is a unital exchange $K$-algebra with index of nilpotence at most~$2$, and as these properties are preserved under finite products and direct limits, every member~$R$ of~$\xA_K$ is a unital exchange $K$-algebra with index of nilpotence at most~$2$. As, by Proposition~\ref{P:V(Exch)}, $\rV(R)$ is a (conical) refinement monoid, it follows from Lemma~\ref{L:BasicVcAK} that $\rV(R)$ is the positive cone of a dimension group in which~$[1_R]$ has index at most two. In particular, by Yu~\cite[Theorem~9]{Yu95}, $R$ has stable rank~$1$.

Now recall that~$\CM$ denotes the category of all conical \cm s with monoid homomorphisms (cf. Definition~\ref{D:PointedMon}). We shall also define the subcategory $\CM^\Rightarrow$ with the same objects but where the morphisms are the pre-\Vhom s (cf. Definition~\ref{D:Vhom}), and the full subcategory~$\CM^{\les\aleph_0}$ of all \emph{countable} (i.e., at most countable) conical \cm s.

Denote by~$\Phi\colon\xA_K\to\CM$ the functor~$\rV$ (i.e., $R\mapsto\rV(R)$, $f\mapsto\rV(f)$).

We introduce in Definition~3.8.1 and Definition~3.8.2 of Gillibert and Wehrung~\cite{GiWe2} attributes of certain collections of categories and functors, called \emph{left larderhood} and \emph{right larderhood}. We shall use the notation used in these two definitions, yet trying to spell out in detail the verifications to be performed, and we shall start with the (traditionally easier) left part.

\begin{lemma}\label{L:cAKLeftLard}
The quadruple $(\xA_K,\CM,\CM^\Rightarrow,\Phi)$ is a left larder.
\end{lemma}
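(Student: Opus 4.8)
The plan is to unwind the definition of left larderhood (\cite[Definition~3.8.1]{GiWe2}) into a short list of concrete conditions on the quadruple $(\xA_K,\CM,\CM^\Rightarrow,\Phi)$, and then to verify each of them. The conditions concern only the ``source'' side of the intended larder, namely the category $\xA_K$, the functor $\Phi=\rV$, and the distinguished subcategory $\CM^\Rightarrow$ of pre-\Vhom s; this is why the left part is the easier one. Concretely, what has to be checked is: (a) that $\xA_K$ is closed under all small directed colimits; (b) that $\Phi$ preserves all small directed colimits; and (c) that the double arrows, i.e.\ the morphisms of $\CM^\Rightarrow$, are closed under small directed colimits inside $\CM$. (Should the definition also ask for the analogous statements with finite products in place of directed colimits, these are handled in exactly the same way, see below.)

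Conditions (a) and (b) require no work beyond quoting what is already in place. For (a), $\xA_K$ is by definition the closure of $\cR_K$ under finite direct products and direct limits of unital $K$-algebras, so it is closed under small directed colimits (and finite products) on the nose. For (b), the preservation of small directed colimits by $\rV$ — together with the preservation of finite products — is one of the basic properties of the functor recorded in Section~\ref{S:rV(R)}; so $\Phi=\rV$ inherits it verbatim.

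The only condition demanding an actual argument is (c), and I expect it to be the (minor) crux. Given a directed system of pre-\Vhom s $\phi_i\colon M_i\to N_i$ with colimit $\phi\colon M\to N$, and a relation $\phi(c)=a+b$ in $N$, I would first descend $c$, $a$, $b$ to representatives $c_i\in M_i$ and $a_i,b_i\in N_i$ at some stage $i$; since equality in a directed colimit is already witnessed at a finite stage, after passing to a larger index $j\ge i$ one may assume $\phi_j(c_j)=a_j+b_j$ in $N_j$. Applying the pre-\Vhom\ property of $\phi_j$ yields $x_j,y_j\in M_j$ with $c_j=x_j+y_j$, $\phi_j(x_j)=a_j$, and $\phi_j(y_j)=b_j$; pushing $x_j,y_j$ forward to $M$ produces the desired splitting of $c$ over $\phi$. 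The same descend-split-push pattern settles the finite-product variant, if needed. With (a), (b), (c) in hand, the quadruple satisfies \cite[Definition~3.8.1]{GiWe2}, which is the assertion of the lemma.
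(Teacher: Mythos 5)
Your items (a) and (b) do match the paper's verification of (CLOS$(\xA_K)$), (PROD$(\xA_K)$), and (CONT$(\Phi)$), and your descend--split--push argument is exactly the ``easily seen'' fact the paper invokes, namely that pre-\Vhom s are closed under direct limits within the category of arrows of~$\CM$. But your item (c) is \emph{not} the remaining condition of left larderhood, and this is a genuine gap: the missing condition in \cite[Definition~3.8.1]{GiWe2} is (PROJ$(\Phi,\CM^\Rightarrow)$), which requires that the \emph{functor} $\Phi=\rV$ send every \emph{extended projection} of~$\xA_K$ --- that is, every morphism $f=\varinjlim_{j\in J}f_j$ that is a direct limit, in the category of arrows of~$\xA_K$, of product projections $f_j\colon R_j\times S_j\onto R_j$ --- to a double arrow of~$\CM$, i.e., a pre-\Vhom. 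As you state them, conditions (a), (b), (c) never relate~$\Phi$ to~$\CM^\Rightarrow$ at all: they would hold for any colimit-preserving functor into~$\CM$, whatever subcategory of double arrows one chooses, so they cannot be equivalent to left larderhood. The entire point of the left larder axioms is this compatibility between the functor and the double arrows, which is what CLL later exploits to produce the natural transformation $\vec{\chi}\colon\rV\vec{B}\Rightarrow\rV\vA$ in the proof of Theorem~\ref{T:CXAKRR0}.

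The good news is that your ingredients nearly suffice to verify the true condition (PROJ), assembled as in the paper: since $\rV$ preserves finite products, $\rV(R_j\times S_j)\cong\rV(R_j)\times\rV(S_j)$ with $\rV(f_j)$ the canonical projection $\rV(R_j)\times\rV(S_j)\onto\rV(R_j)$; a canonical monoid projection $M\times N\onto M$ is a pre-\Vhom\ (given $c=(c_1,c_2)$ with $c_1=a+b$, split $c=(a,c_2)+(b,0)$); hence each $\rV(f_j)$ is a pre-\Vhom. Then the closure of pre-\Vhom s under direct limits (your argument for (c)) together with preservation of direct limits by~$\rV$ (your (b)) gives that $\rV(f)=\varinjlim_{j\in J}\rV(f_j)$ is a pre-\Vhom. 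So what is missing is the correct statement of (PROJ) and the easy, but indispensable, base case that~$\rV$ of a product projection is a pre-\Vhom; without that step nothing in your list ever certifies that any morphism in the image of~$\Phi$ is a double arrow.
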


\begin{proof}
We check one after another the items defining left larderhood.

\begin{itemize}
\item (CLOS$(\xA_K)$) and (PROD$(\xA_K)$): by definition, $\xA_K$ is closed under direct limits and finite products.

\item (CONT$(\Phi)$): it is well-known that the functor~$\rV$ preserves direct limits.

\item (PROJ$(\Phi,\CM^\Rightarrow)$): we must check that $\rV(f)$ is a double arrow (i.e., a pre-\Vhom), for each morphism $f\colon R\to S$ in~$\xA_K$ which is a direct limit (in the category of all arrows of~$\xA_K$) $f=\varinjlim_{j\in J}f_j$ of projections $f_j\colon R_j\times S_j\onto R_j$, $(x,y)\mapsto x$, for each $j\in J$ ($J$ is a directed partially ordered set). As $\rV(R_j\times S_j)\cong\rV(R_j)\times\rV(S_j)$, with $\rV(f_j)$ the canonical projection $\rV(R_j)\times\rV(S_j)\onto\rV(R_j)$, $\rV(f_j)$ is obviously a pre-\Vhom. As the class of pre-\Vhom s is easily seen to be closed under direct limits (within the category of all arrows of~$\CM$) and the functor~$\rV$ preserves direct limits, it follows that~$\rV(f)$ is a pre-\Vhom.\qed
\end{itemize}
\renewcommand{\qed}{}
\end{proof}

In order to fill the ``right larder'' part, we need, for C*-algebras of real rank~$0$,

\begin{itemize}
\item $\xRR$, the category of all C*-algebras of real rank~$0$ with \Chom s (cf. Definition~\ref{D:CatRings1}).

\item $\xRRs$, the category of all \emph{separable} members of~$\xRR$.

\item $\gYa$ denotes the functor~$\rV$ (i.e., $B\mapsto\rV(B)$, $f\mapsto\rV(f)$) from~$\xRR$ to~$\CM$.
\end{itemize}

For the side of regular rings, we need

\begin{itemize}
\item $\Reg$, the category of all regular rings with ring homomorphisms.

\item $\Reg^{\les\aleph_0}$, the category of all \emph{countable} regular rings.

\item $\gYr$ denotes the functor~$\rV$ (i.e., $B\mapsto\rV(B)$, $f\mapsto\rV(f)$) from~$\Reg$ to~$\CM$.
\end{itemize}

Back to C*-algebras of real rank~$0$, we prove

\begin{lemma}\label{L:cBRightLard}
The $6$-uple $(\xRR,\xRRs,\CM,\CM^{\les\aleph_0},\CM^\Rightarrow,\gYa)$ is a right $\aleph_1$-larder.
\end{lemma}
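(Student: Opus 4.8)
The plan is to verify, one after another, the clauses defining a right $\aleph_1$-larder as laid out in Definition~3.8.2 of Gillibert and Wehrung~\cite{GiWe2}, here specialized to $\mathcal{B}=\xRR$, $\mathcal{B}^\dagger=\xRRs$, $\mathcal{S}=\CM$, $\mathcal{S}^\Downarrow=\CM^{\les\aleph_0}$, $\mathcal{S}^\Rightarrow=\CM^\Rightarrow$, and $\Psi=\gYa=\rV$. The routine clauses I would dispatch as follows. The category~$\xRR$ is closed under arbitrary C*-direct limits, since real rank~$0$ is preserved under direct limits, and the functor~$\rV$ preserves those limits (cf. Section~\ref{S:rV(R)}), which settles the continuity requirements; moreover the directed system of separable C*-subalgebras of a given~$B$ is $\aleph_1$-directed, matching the cardinal. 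For a separable~$B$, each amplification~$\Mat_n(B)$ is separable, every idempotent is equivalent to a projection, and nearby projections are equivalent, so the set of $\sim$-classes in~$\Idp_\infty(B)$ is countable; hence $\rV(B)\in\CM^{\les\aleph_0}$, which is the requirement that~$\gYa$ carry~$\xRRs$ into~$\CM^{\les\aleph_0}$. Finally, every countable conical monoid is weakly $\aleph_1$-presentable in~$\CM$, since a homomorphism out of it into an $\aleph_1$-directed colimit factors through a stage; this provides the smallness clause for~$\CM^{\les\aleph_0}$.

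The substantive part is the right Löwenheim–Skolem condition: given $B\in\xRR$, a countable conical monoid~$X$, and a double arrow $\chi\colon X\to\rV(B)$ in~$\CM^\Rightarrow$, I must produce a separable C*-algebra~$B^\dagger$ of real rank~$0$, a \Chom\ $u\colon B^\dagger\to B$, and a double arrow $\chi^\dagger\colon X\to\rV(B^\dagger)$ in~$\CM^\Rightarrow$ such that $\chi=\rV(u)\circ\chi^\dagger$. The approach is a countable closing-off argument. Starting from a countable generating set of~$X$, I choose projections in~$\Mat_\infty(B)$ representing the values of~$\chi$, together with the countably many witnesses demanded by the splitting condition carried by the double arrow~$\chi$, and I let~$B_0$ be the separable C*-subalgebra they generate. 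I then iterate: at each stage~$B_n$ I adjoin, from within~$B$, finite-spectrum self-adjoint approximants to a countable dense subset of the self-adjoint part of~$B_n$ (to force real rank~$0$ in the limit), as well as the orthogonal decompositions witnessing that the relevant sums split already inside the subalgebra (to force~$\chi^\dagger$ to be a pre-\Vhom). Setting $B^\dagger:=\ol{\bigcup_{n<\omega}B_n}$ yields a separable C*-subalgebra of~$B$ whose self-adjoint part has dense finite-spectrum approximants, hence real rank~$0$, and through whose~$\rV$ the arrow~$\chi$ factors by a double arrow by construction.

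The main obstacle is exactly this closing-off step, and specifically the need to meet two demands simultaneously in the limit: that~$B^\dagger$ genuinely have real rank~$0$, and that~$\chi^\dagger$ genuinely be a double arrow. For the first I would use the local characterization of real rank~$0$ (density of finite-spectrum self-adjoint elements), equivalently the exchange-ring description of Proposition~\ref{P:RR0Exch}, which is stable under the increasing-union passage; the delicate point is that a separable C*-subalgebra of a real-rank-$0$ algebra need not itself have real rank~$0$, so the approximants must be woven in at every finite stage rather than merely at the end. For the second I would use the refinement property of~$\rV(B)$ (Proposition~\ref{P:V(Exch)}) to locate, at each stage, the orthogonal idempotents realizing a prescribed splitting, and Lemma~\ref{L:IbarC*} to transport projections between a subalgebra and its closure when passing to the limit. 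A bookkeeping scheme interleaving these countably many tasks over~$\omega$ steps then closes the argument; once this condition is in hand, the remaining clauses of right $\aleph_1$-larderhood are formal, and the lemma follows.
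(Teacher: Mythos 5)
Your closing-off machinery is the right tool, but you have aimed it at the wrong statement of the right L\"owenheim--Skolem condition, and this is a genuine gap rather than a cosmetic one. In the condition $(\mathrm{LS}^{\mathrm{r}}_{\aleph_1}(B))$ of Gillibert--Wehrung, as this paper uses it, the double arrow emanates \emph{from} $\rV(B)$: one is given a countable conical monoid~$M$, a surjective pre-\Vhom\ $\psi\colon\rV(B)\Rightarrow M$, and countably many \Chom s $f_n\colon B_n\to B$, and one must produce a separable unital C*-subalgebra $C\subseteq B$ of real rank~$0$ containing $\bigcup_n f_n(B_n)$ such that $\psi\circ\rV(e_C)$ is still a surjective pre-\Vhom, where $e_C\colon C\into B$ is the inclusion. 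You instead posit a double arrow $\chi\colon X\to\rV(B)$ going \emph{into} $\rV(B)$ and ask for a factorization $\chi=\rV(u)\circ\chi^\dagger$; that is a different condition, and not the one CLL consumes. The direction matters for the application: in Theorem~\ref{T:CXAKRR0} the hypothesis to be refuted is a surjective pre-\Vhom\ $\chi\colon\rV(B)\Rightarrow\rV(R_K)$, a double arrow \emph{out of}~$\rV(B)$, and right larderhood is precisely what allows restricting such an arrow to a diagram of separable subalgebras. Your factorization property would not deliver this; it also faces an internal obstruction, since recovering $\chi^\dagger(x)=\alpha'$ from $\chi(x)=\rV(u)(\alpha')$ would require $\rV(u)$ to be injective, which there is no reason to expect.

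Once the condition is stated correctly, your construction essentially becomes the paper's proof, and the technical points you flag are the right ones: interleave, over $\omega$ stages, real-rank-$0$ witnesses (the paper adjoins, for each self-adjoint element of the current countable stage, nearby invertible self-adjoint elements of the unitization) with splitting witnesses, then take the closure of the union. Note, however, that in the correct direction the splitting witnesses come not from refinement of $\rV(B)$ but from the pre-\Vhom\ property of~$\psi$ combined with Lemma~\ref{L:VmeasDec}: whenever $\psi([c]_B)=\alpha+\beta$ there exist orthogonal idempotents $a,b$ with $c=a+b$, $\psi([a]_B)=\alpha$, and $\psi([b]_B)=\beta$, and these are adjoined at the next stage. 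Two further points your sketch omits: surjectivity of $\psi\circ\rV(e_C)$ must be secured at the outset, by choosing the initial stage $Y_0$ so that $M=\setm{\psi([e]_B)}{e\in\Idp_\infty(Y_0)}$ (possible since $M$ is countable and $\psi$ is surjective); and passing from the union $Y$ to its closure $\ol{Y}$ requires every projection of $\Mat_\infty(\ol{Y})$ to be equivalent to one of $\Mat_\infty(Y)$, which the paper arranges by building projection-density (FFC-closure) into every stage, in the spirit of the Lemma~\ref{L:IbarC*} you cite. Finally, the continuity and closure-under-limits clauses you verify first are left-larder conditions and are not part of right $\aleph_1$-larderhood; only the presentability clause (which you handle correctly: $\rV(B)$ is countable for separable~$B$, and countable conical monoids are weakly $\aleph_1$-presented) and $(\mathrm{LS}^{\mathrm{r}}_{\aleph_1})$ need to be checked.
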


\begin{proof}
The part (PRES${}_{\aleph_1}(\CM^{\les\aleph_0}, \gYa)$) involves the notion of a \emph{weakly $\aleph_1$-pre\-sent\-ed structure} introduced in Gillibert and Wehrung \cite[Definition~1.3.2]{GiWe2}. It is, basically, easy: every $B\in\xRRs$ is separable, thus (cf. the Note on page~28 in Blackadar~\cite{Black98}) $\rV(B)$ is countable, thus (cf. Gillibert and Wehrung \cite[Proposition~4.2.3]{GiWe2}) weakly $\aleph_1$-presented.

The part (LS${}_{\aleph_1}^{\mathrm{r}}(B)$), for a given $B\in\xRR$, is less obvious. We are given a countable conical refinement monoid~$M$ and a pre-\Vhom\ $\psi\colon\rV(B)\Rightarrow\nobreak M$, together with a sequence $f_n\colon B_n\to B$ of unital \Chom s (the~$f_n$ can be assumed to be monomorphisms, but this will play no role here). We are trying to find a separable, unital C*-subalgebra~$C$ of~$B$, of real rank~$0$, containing $\bigcup_{n\in\ZZ^+}f_n(B_n)$, such that, if $e_C\colon C\into B$ denotes the inclusion map, then $\psi\circ\rV(e_C)$ is a surjective pre-\Vhom.

For each $n\in\ZZ^+$, denote by~$r_n\colon\RR\to\RR$ the continuous function defined by $r_n(x)=0$ if $x\leq\frac{1}{n+3}$, $r_n(x)=1$ if $x\geq\frac{n+2}{n+3}$, and~$r_n$ is affine on the interval $\bigl[\frac{1}{n+3},\frac{n+2}{n+3}\bigr]$. We say that a countable subset~$X$ of~$B$ is \emph{FFC-closed} (``FFC'' stands for ``Flat Function Calculus'') if $r_n(x)\in\Mat_\infty(X)$ for each self-adjoint element $x\in\Mat_\infty(X)$ and each $n\in\ZZ^+$. Furthermore, denote by~$\cF$ the set of all countable, FFC-closed, *-closed unital $\QQ[i]$-subalgebras of~$B$. A classical argument about approximating projections on a dense set (see, for example, Blackadar \cite[Proposition~4.5.1]{Black98}) shows that every member~$X$ of~$\cF$ satisfies the following:
 \begin{multline}\label{Eq:DenseProj}
 \text{For each }\eps>0\text{ and each projection }
 e\in\Mat_\infty(\ol{X})\,,\\
 \text{ there exists a projection }x\in\Mat_\infty(X)\text{ such that }
 \norm{e-x}<\eps\,. 
 \end{multline}

\begin{sclaim}
For all $c\in\Idp_\infty(B)$ and all $\alpha,\beta\in M$ with $\psi([c]_B)=\alpha+\beta$, there are orthogonal idempotents $a,b\in\Idp_\infty(B)$ such that $c=a+b$ while $\psi([a]_B)=\alpha$ and $\psi([b]_B)=\beta$.
\end{sclaim}

\begin{scproof}
As~$\psi$ is a pre-\Vhom, there are $\alpha',\beta'\in\rV(B)$ such that $[c]_B=\alpha'+\beta'$ while $\psi(\alpha')=\alpha$ and $\psi(\beta')=\beta$. By Lemma~\ref{L:VmeasDec}, there are orthogonal idempotents $a,b\in\Idp_\infty(B)$ such that $c=a+b$ while $[a]_B=\alpha'$ and $[b]_B=\beta'$. Hence~$a$ and~$b$ are as required.
\end{scproof}

For each $n\in\ZZ^+$, pick a countable dense subset $X_n\subseteq B_n$.
As~$\psi$ is surjective and~$M$ is countable, there exists $Y_0\in\cF$ containing $\bigcup_{n\in\ZZ^+}f_n(X_n)$ such that $M=\setm{\psi([e]_B)}{e\in\Idp_\infty(Y_0)}$.

Suppose that~$Y_j\in\cF$ has been constructed for each $j\leq 2k$. As~$B$ has real rank~$0$ and~$Y_{2k}$ is countable, there exists $Y_{2k+1}\in\cF$ containing~$Y_{2k}$ such that every self-adjoint element of $\QQ[i]\times Y_{2k}$, viewed as a subset of the unitization of~$B$, lies within $1/(k+1)$ of some invertible self-adjoint element of the unitization of~$Y_{2k+1}$.

As~$Y_{2k+1}$ and~$M$ are both countable and by the Claim above, there is $Y_{2k+2}\in\cF$ containing~$Y_{2k+1}$ such that for all $c\in\Idp_\infty(Y_{2k+1})$ and $\alpha,\beta\in M$ with $\psi([c]_B)=\alpha+\beta$, there are orthogonal idempotents $a,b\in\Idp_\infty(Y_{2k+2})$ such that $c=a+b$ while $\psi([a]_B)=\alpha$ and $\psi([b]_B)=\beta$.

By construction, the union $Y:=\bigcup_{k\in\ZZ^+}Y_k$ belongs to~$\cF$ and the closure~$\ol{Y}$ has real rank~$0$. Furthermore, it follows from~\eqref{Eq:DenseProj} that every projection of $\Mat_\infty(\ol{Y})$ is equivalent to some projection in $\Mat_\infty(Y)$; hence, by the construction of~$Y_{2k+2}$ from~$Y_{2k+1}$, it follows that $\psi\circ\rV(e_{\ol{Y}})$ is a pre-\Vhom. Furthermore, from $\bigcup_{n\in\ZZ^+}f_n(X_n)\subseteq Y_0\subseteq Y$ it follows that $\bigcup_{n\in\ZZ^+}f_n(B_n)\subseteq\ol{Y}$. Therefore, $C:=\ol{Y}$ is as required.
\end{proof}

Consider again the set~$\cR_K$ of exchange algebras defined in~\eqref{Eq:DefncR}. As $\Phi(\cR_K)$ is contained in $\CM^{\les\aleph_0}$, we obtain the following consequence of Lemmas~\ref{L:cAKLeftLard} and~\ref{L:cBRightLard} together with the (obvious) \cite[Proposition~3.8.3]{GiWe2}.

\begin{lemma}\label{L:AKRR0Larder}
The $8$-uple $(\xA_K,\xRR,\CM,\cR_K,\xRRs,\CM^\Rightarrow,\Phi, \gYa)$ is an $\aleph_1$-larder.
\end{lemma}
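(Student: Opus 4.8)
The plan is to obtain the statement purely formally, by recognizing that the notion of an $\aleph_1$-larder (Gillibert and Wehrung \cite[Definitions~3.8.1 and~3.8.2]{GiWe2}) decomposes as a \emph{left} half together with a \emph{right} half, both of which have already been verified, plus a single compatibility condition tying the two halves together. In other words, I would read the $8$-uple off as the concatenation of the data already assembled in the two preceding lemmas, and invoke the packaging result \cite[Proposition~3.8.3]{GiWe2}.

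First I would isolate the left data $(\xA_K,\CM,\CM^\Rightarrow,\Phi)$ and invoke Lemma~\ref{L:cAKLeftLard}, which asserts precisely that this quadruple is a left larder. Symmetrically, I would isolate the right data $(\xRR,\xRRs,\CM,\CM^{\les\aleph_0},\CM^\Rightarrow,\gYa)$ and invoke Lemma~\ref{L:cBRightLard}, which asserts that this $6$-uple is a right $\aleph_1$-larder. The key structural observation making the two combinable is that both halves share the same target category~$\CM$ together with its distinguished class~$\CM^\Rightarrow$ of double arrows.

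The only genuinely new ingredient is the compatibility condition required by \cite[Proposition~3.8.3]{GiWe2}, namely that the image under~$\Phi=\rV$ of the distinguished family $\cR_K=\set{K}\cup\set{C_K}\cup\set{\Mat_2(K(\xt))}$ of seed objects be contained in the small category~$\CM^{\les\aleph_0}$ of countable conical \cm s. This is immediate from the explicit computations already recorded: $\rV(K)\cong\ZZ^+$, $\rV(C_K)\cong\ZZ^+\times\ZZ^+$ (cf. the proof of Theorem~\ref{T:LuLift}), and $\rV(\Mat_2(K(\xt)))\cong\ZZ^+$, so that $\Phi(\cR_K)\subseteq\CM^{\les\aleph_0}$, as already noted just above the statement. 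Feeding Lemmas~\ref{L:cAKLeftLard} and~\ref{L:cBRightLard} together with this inclusion into Proposition~3.8.3 of \cite{GiWe2} then yields that the $8$-uple is an $\aleph_1$-larder.

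I do not expect any obstacle here: the substantive work — the preservation of direct limits by~$\rV$ and the double-arrow property of the projection maps on the left, and the weak $\aleph_1$-presentability of countable monoids together with the L\"owenheim--Skolem-type condition (LS${}_{\aleph_1}^{\mathrm{r}}$) on the right — has already been discharged in the two cited lemmas. The present statement is essentially a bookkeeping step assembling those two halves into a single larder, the whole point of Proposition~3.8.3 being to render exactly this assembly automatic once the seed-compatibility inclusion has been checked.
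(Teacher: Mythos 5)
Your proposal is correct and follows exactly the paper's own route: the paper derives this lemma in one line from Lemma~\ref{L:cAKLeftLard} (the left larder), Lemma~\ref{L:cBRightLard} (the right $\aleph_1$-larder), the observation that $\Phi(\cR_K)\subseteq\CM^{\les\aleph_0}$, and the assembly result \cite[Proposition~3.8.3]{GiWe2}. Your explicit check that $\rV$ of each member of $\cR_K$ is a countable (indeed simplicial) monoid merely spells out what the paper records as an immediate observation, so there is nothing to add.
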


The analogue of Lemma~\ref{L:cBRightLard} for regular rings is the following.

\begin{lemma}\label{L:cBRightLardReg}
The $6$-uple $(\Reg,\Reg^{\les\aleph_0},\CM,\CM^{\les\aleph_0},\CM^\Rightarrow,\gYr)$ is a right $\aleph_1$-larder.
\end{lemma}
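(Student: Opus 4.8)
The plan is to verify the two conditions defining right $\aleph_1$-larderhood (cf. Gillibert and Wehrung \cite[Definition~3.8.2]{GiWe2}), following the proof of Lemma~\ref{L:cBRightLard} almost line for line, but taking advantage of the fact that for regular rings there is no topological closure to deal with. The part (PRES${}_{\aleph_1}(\CM^{\les\aleph_0},\gYr)$) is immediate: every $B\in\Reg^{\les\aleph_0}$ is a countable ring, so $\Idp_\infty(B)$ is countable and hence $\rV(B)$ is a countable conical monoid, which by \cite[Proposition~4.2.3]{GiWe2} is weakly $\aleph_1$-presented.

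For the part (LS${}_{\aleph_1}^{\mathrm{r}}(B)$), fix $B\in\Reg$. We are given a countable conical refinement monoid~$M$, a surjective pre-\Vhom\ $\psi\colon\rV(B)\Rightarrow M$, and a sequence $f_n\colon B_n\to B$ of ring homomorphisms with each $B_n\in\Reg^{\les\aleph_0}$, and we must produce a countable regular subring~$C$ of~$B$ containing $\bigcup_{n\in\ZZ^+}f_n(B_n)$, with inclusion map $e_C\colon C\into B$, such that $\psi\circ\rV(e_C)$ is a surjective pre-\Vhom. The analogue of the Claim from the proof of Lemma~\ref{L:cBRightLard} holds verbatim here, via Lemma~\ref{L:VmeasDec}: for every $c\in\Idp_\infty(B)$ and all $\alpha,\beta\in M$ with $\psi([c]_B)=\alpha+\beta$, there are orthogonal idempotents $a,b\in\Idp_\infty(B)$ with $c=a+b$, $\psi([a]_B)=\alpha$, and $\psi([b]_B)=\beta$.

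I would then run a closing-off argument, building an increasing $\omega$-chain of countable subrings $Y_0\subseteq Y_1\subseteq\cdots$ of~$B$ and interleaving three closures. First, using the surjectivity of~$\psi$ and the countability of~$M$, choose $Y_0$ countable, containing $\bigcup_{n\in\ZZ^+}f_n(B_n)$ (the $B_n$ being countable, their full images may be taken), together with a family of idempotents $\setm{e_m}{m\in M}\subseteq\Idp_\infty(B)$ with $\psi([e_m]_B)=m$; this guarantees surjectivity of $\psi\circ\rV(e_C)$ for every $C\supseteq Y_0$. Given $Y_k$, adjoin to it countably many new elements and pass to the generated subring to obtain a countable $Y_{k+1}\supseteq Y_k$ such that (a) for each $x\in Y_k$ there is $y\in Y_{k+1}$ with $xyx=x$ (a quasi-inverse of~$x$, available since~$B$ is regular), and (b) for all $c\in\Idp_\infty(Y_k)$ and all $\alpha,\beta\in M$ with $\psi([c]_B)=\alpha+\beta$, the orthogonal idempotents $a,b$ supplied by the Claim lie in $Y_{k+1}$. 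Setting $C:=\bigcup_{k\in\ZZ^+}Y_k$, condition (a) makes every element of~$C$ quasi-invertible within~$C$, so~$C$ is a regular subring; it is countable and contains $\bigcup_n f_n(B_n)$; and by (b), $\psi\circ\rV(e_C)$ is a pre-\Vhom, surjective by the choice of~$Y_0$.

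The main simplification relative to Lemma~\ref{L:cBRightLard} is that the entire topological apparatus there---the FFC-closure, the projection approximation~\eqref{Eq:DenseProj}, and the passage to the closure~$\ol{Y}$---disappears: the subring~$C$ is itself regular, and every idempotent of $\Mat_\infty(C)$ already lies in $\Mat_\infty(C)$, so the pre-\Vhom\ and surjectivity properties transfer with no approximation. The only genuinely new point to watch is the quasi-inverse closure~(a), which must be interleaved with the other two closures so that the countable union~$C$ is actually regular; this is routine, but it is precisely the step where regularity, rather than real rank~$0$, enters.
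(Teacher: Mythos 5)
Your proposal is correct and takes exactly the route the paper intends: the paper in fact omits this proof, remarking only that it is ``similar, and actually slightly easier (because there is no topology involved)'' than the proof of Lemma~\ref{L:cBRightLard}, and your closing-off argument is precisely that adaptation, with the Claim transferring verbatim via Lemma~\ref{L:VmeasDec} and the FFC/projection-approximation apparatus correctly discarded. Your quasi-inverse closure (a), interleaved so that the countable union~$C$ is itself regular, is the right replacement for the real-rank-$0$ closure step (the invertible-approximation stage) in the C*-algebra proof, and the rest of the verification (countability of $\rV(B)$ for $B\in\Reg^{\les\aleph_0}$, surjectivity from~$Y_0$, the pre-V-homomorphism property from~(b)) is sound.
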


The proof of Lemma~\ref{L:cBRightLardReg} is similar, and actually slightly easier (because there is no topology involved) than the one of Lemma~\ref{L:cBRightLard}, thus we omit it.

Lemma~\ref{L:AKRR0Larder} is modified in a similar fashion.

\begin{lemma}\label{L:AKRegLarder}
The $8$-uple $(\xA_K,\Reg,\CM,\cR_K,\Reg^{\les\aleph_0},\CM^\Rightarrow,\Phi,\gYr)$ is an $\aleph_1$-larder.
\end{lemma}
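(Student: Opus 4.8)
The plan is to obtain Lemma~\ref{L:AKRegLarder} by exactly the assembly that produced Lemma~\ref{L:AKRR0Larder}, simply replacing the C*-algebra right larder by its regular-ring counterpart. Recall that an $\aleph_1$-larder is built from a \emph{left larder} and a \emph{right $\aleph_1$-larder} sharing the same codomain category~$\CM$ and the same subcategory~$\CM^\Rightarrow$ of double arrows, by invoking \cite[Proposition~3.8.3]{GiWe2}, provided the distinguished set of source objects has~$\Phi$-image lying inside the prescribed small subcategory. All three ingredients are already available.

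First, I would invoke Lemma~\ref{L:cAKLeftLard}: the quadruple $(\xA_K,\CM,\CM^\Rightarrow,\Phi)$ is a left larder. This half of the data is independent of whether the right-hand target category is~$\xRR$ or~$\Reg$, so it is reused verbatim from the proof of Lemma~\ref{L:AKRR0Larder}. Second, I would invoke Lemma~\ref{L:cBRightLardReg}, which asserts precisely that $(\Reg,\Reg^{\les\aleph_0},\CM,\CM^{\les\aleph_0},\CM^\Rightarrow,\gYr)$ is a right $\aleph_1$-larder; this is the regular-ring analogue of Lemma~\ref{L:cBRightLard}, where the substance (the Löwenheim–Skolem-type condition) lives, and which the paper notes is if anything easier than the C*-case since no topology intervenes.

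Third, the only gluing hypothesis left to check is that $\Phi(\cR_K)\subseteq\CM^{\les\aleph_0}$, i.e.\ that each of $\rV(K)$, $\rV(C_K)$, and $\rV(\Mat_2(K(\xt)))$ is at most countable. This was already observed in the passage preceding Lemma~\ref{L:AKRR0Larder}: each of these monoids is simplicial, being isomorphic to~$\ZZ^+$ or to $\ZZ^+\times\ZZ^+$ (for $C_K$ this is exactly what was computed in the proof of Theorem~\ref{T:LuLift}), hence countable. Since the collection~$\cR_K$ is the same set of objects used in Lemma~\ref{L:AKRR0Larder}, this verification transfers without change.

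With these three facts in hand, \cite[Proposition~3.8.3]{GiWe2} immediately yields that the $8$-uple $(\xA_K,\Reg,\CM,\cR_K,\Reg^{\les\aleph_0},\CM^\Rightarrow,\Phi,\gYr)$ is an $\aleph_1$-larder. I do not expect any genuine obstacle at the level of this lemma: the real work was discharged upstream in Lemmas~\ref{L:cAKLeftLard} and~\ref{L:cBRightLardReg}, and the only care required here is to match up the eight categorical slots correctly against the format of \cite[Proposition~3.8.3]{GiWe2} and to confirm the countability side condition on~$\Phi(\cR_K)$.
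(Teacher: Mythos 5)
Your proposal is correct and follows exactly the paper's own route: the paper obtains Lemma~\ref{L:AKRegLarder} by the same assembly as Lemma~\ref{L:AKRR0Larder}, namely gluing the left larder of Lemma~\ref{L:cAKLeftLard} to the right $\aleph_1$-larder of Lemma~\ref{L:cBRightLardReg} via \cite[Proposition~3.8.3]{GiWe2}, using that $\Phi(\cR_K)\subseteq\CM^{\les\aleph_0}$. Your explicit verification of the countability condition (each member of $\Phi(\cR_K)$ being simplicial, hence countable) is a correct filling-in of a detail the paper leaves implicit.
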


Now everything is ready for the proof of the following result.

\begin{theorem}\label{T:CXAKRR0}
For any field~$K$, there exists a unital exchange $K$-algebra~$R_K$ that satisfies the following properties:
\begin{enumerate}
\item $R_K$ has index of nilpotence~$2$ \pup{thus stable rank~$1$}.

\item For every C*-algebra of real rank~$0$ \pup{resp., for every regular ring}~$B$, there is no surjective pre-\Vhom\ from~$\rV(B)$ onto~$\rV(R_K)$.
\end{enumerate}
In particular, there is no C*-algebra of real rank~$0$ \pup{resp., no regular ring}~$B$ such that $\rV(R_K)\cong\rV(B)$. Furthermore, $R_K$ can be constructed as the \pup{unital} direct limit of a system of~$\aleph_3$ finite products of members of~$\cR_K$. In particular, if~$K$ is countable \pup{or, more generally, has at most~$\aleph_3$ elements}, then $\card R_K\leq\aleph_3$.
\end{theorem}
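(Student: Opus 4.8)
The plan is to feed the positive and negative lifting results for the cube~$\vD$ into the Condensate Lifting Lemma (CLL) of~\cite{GiWe2}, using the larders already assembled in Lemmas~\ref{L:AKRR0Larder} and~\ref{L:AKRegLarder}. The two pieces of starting data are, on the one hand, the commutative diagram~$\vA$ of unital exchange $K$-algebras lifting~$\vDu$ furnished by Proposition~\ref{P:cDliftedExch}: its vertices are $K$, $K\times K$, $C_K$, and $\Mat_2(K(\xt))$, all members of~$\xA_K$, so that $\Phi\vA\cong\vD$ with $\Phi=\rV$; and, on the other hand, the non-liftability of~$\vD$, with respect to~$\forg$, by V-premeasured C*-algebras of real rank~$0$ (Corollary~\ref{C:NoRR0Lift}) resp.\ by V-premeasured regular rings (Corollary~\ref{C:NoRegLift}). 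The algebra~$R_K$ will be the condensate~$\xF(\vA)$ built from~$\vA$ and a lifter of the cube.

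First I would record, from the general theory of~\cite{GiWe2}, the existence of an $\aleph_3$-lifter of the cube $P=\set{0,1}^3$; this is where the cardinal~$\aleph_3$ enters, the cube being a finite lattice and hence an almost join-semilattice. Forming the condensate $R_K:=\xF(\vA)$ then produces, by construction, a directed colimit of~$\aleph_3$ finite products of members of~$\cR_K$. Since~$\xA_K$ is closed under finite products and directed colimits, $R_K\in\xA_K$; in particular $R_K$ is a unital exchange $K$-algebra of index of nilpotence at most~$2$, whence (by Yu~\cite{Yu95}) of stable rank~$1$, and, by Proposition~\ref{P:V(Exch)} together with Lemma~\ref{L:BasicVcAK}, $\rV(R_K)$ is the positive cone of a dimension group in which $[1_{R_K}]$ has index at most~$2$. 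That the index of nilpotence is exactly~$2$ follows because the condensate retains a nonzero square-zero element inherited from the factor~$C_K$ (equivalently, $[1_{R_K}]$ has index exactly~$2$ in~$\rV(R_K)$, so the contrapositive of Lemma~\ref{L:BasicVcAK} forbids index of nilpotence~$1$); this settles~(i).

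The heart of the argument is the contrapositive use of CLL for~(ii). Assume, toward a contradiction, that some C*-algebra of real rank~$0$ (resp.\ regular ring)~$B$ admits a surjective pre-\Vhom\ $\rV(B)\Rightarrow\rV(R_K)$. Applying CLL to the $\aleph_1$-larder of Lemma~\ref{L:AKRR0Larder} (resp.\ Lemma~\ref{L:AKRegLarder}), the $\aleph_3$-lifter, and the diagram~$\vA$ with condensate $R_K=\xF(\vA)$, this liftability of $\Phi(R_K)=\rV(R_K)$ through a surjective double arrow reflects back to a liftability of the diagram $\Phi\vA\cong\vD$ by objects of~$\xRR$ (resp.\ $\Reg$) through surjective pre-\Vhom s. Via Proposition~\ref{P:FactMeas}, such a diagram is precisely a lifting of~$\vD$, with respect to~$\forg$, by V-premeasured C*-algebras of real rank~$0$ (resp.\ V-premeasured regular rings), contradicting Corollary~\ref{C:NoRR0Lift} (resp.\ Corollary~\ref{C:NoRegLift}). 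Hence no such~$B$ exists; since any isomorphism $\rV(B)\cong\rV(R_K)$ is in particular a surjective pre-\Vhom, the ``in particular'' clause follows as well.

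Finally, for the cardinality bound, each member of~$\cR_K$ has at most $\aleph_0+\card K$ elements, as does any finite product thereof, so a directed colimit of~$\aleph_3$ of them has at most $\aleph_3+\card K$ elements; for $\card K\les\aleph_3$ this is~$\aleph_3$. I expect the main obstacle to lie entirely in the correct invocation of CLL: matching its hypotheses (the precise larder data, the double-arrow class~$\CM^\Rightarrow$, and the surjectivity built into the right-larder condition $\textrm{(LS}^{\mathrm{r}}_{\aleph_1})$) and, above all, identifying its output --- a diagram lifted through surjective pre-\Vhom s --- with the V-premeasured liftings that Corollaries~\ref{C:NoRR0Lift} and~\ref{C:NoRegLift} rule out. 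By contrast, the genuinely substantive mathematics (Theorem~\ref{T:NoLift} and the larder verifications) is already in place, so no new ring- or C*-theoretic input is needed here.
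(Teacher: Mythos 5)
Your proposal reproduces the paper's own proof in all essentials: the same invocation of CLL with the $\aleph_1$-larders of Lemmas~\ref{L:AKRR0Larder} and~\ref{L:AKRegLarder}, the diagram~$\vA$ from Proposition~\ref{P:cDliftedExch}, the condensate as~$R_K$, and the contradiction obtained by converting CLL's output (a diagram of surjective pre-\Vhom s onto~$\vD$) into V-premeasured liftings via Proposition~\ref{P:FactMeas}, against Corollaries~\ref{C:NoRR0Lift} and~\ref{C:NoRegLift}. Two small points deserve flagging, however.

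First, a terminological slip: what CLL requires here is not an ``$\aleph_3$-lifter'' of the cube but an \emph{$\aleph_1$-lifter} $(X,\bX)$ with $\card X\leq\aleph_3$ (the $\aleph_1$ must match the larder's index); the paper gets this from $(\aleph_3,{<}\aleph_1)\leadsto P$ via the Kuratowski index bound $\kur(P)\leq3$, which in turn uses that the cube has exactly three \jirr\ elements. Your phrasing collapses these two cardinals into one, which in the terminology of~\cite{GiWe2} would denote a different object.

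Second, and more substantively, your parenthetical argument that the index of nilpotence is \emph{exactly}~$2$ is flawed: the claimed equivalence between ``$R_K$ retains a nonzero square-zero element'' and ``$[1_{R_K}]$ has index exactly~$2$ in~$\rV(R_K)$'' is false, since Lemma~\ref{L:BasicVcAK} goes only one way --- the ring~$C_K$ itself is a counterexample, having square-zero elements (e.g. $\begin{pmatrix}0&\xt\\ 0&0\end{pmatrix}$) while $[1_{C_K}]$ has index~$1$ in $\rV(C_K)\cong\ZZ^+\times\ZZ^+$. Moreover, the assertion that a square-zero element ``inherited from the factor~$C_K$'' survives into the direct limit is not justified: transition maps in the condensate system can project factors away, so persistence requires an argument about the structure of~$\xF(X)\otimes\vA$ that you do not give. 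To be fair, the paper's own proof is silent on the lower bound as well (it only establishes index of nilpotence at most~$2$, which is what the parenthetical ``thus stable rank~$1$'' actually uses), so this does not affect the main comparison; but as written your patch is not a proof.
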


\begin{proof}
We apply \cite[Lemma 3.4.2]{GiWe2}, called there CLL, to the following data:
\begin{itemize}
\item $\lambda=\mu=\aleph_1$;

\item $P$ is the powerset lattice of the three-element set $\set{0,1,2}$;

\item $\vA$ is the commutative diagram of unital exchange $K$-algebras, given in Proposition~\ref{P:cDliftedExch}, such that~$\rVu\vA\cong\vDu$. Observe that all objects of~$\vA$ belong to~$\cR_K$ (cf. \eqref{Eq:DefncR});

\item $\Lambda$ is one of the $\aleph_1$-larders given by either Lemma~\ref{L:AKRR0Larder} or Lemma~\ref{L:AKRegLarder}. The structure $\xF(X)\otimes\vA$ involved in the statement of CLL, which will turn out to be the desired counterexample~$R_K$, is the same for both larders (it depends only of~$\xA_K$, $\vA$, $P$, and~$X$).
\end{itemize}

In order for the assumptions underlying CLL to be fulfilled, we need~$P$ to admit an $\aleph_1$-lifter (cf. \cite[Definition 3.2.1]{GiWe2}) $(X,\bX)$ such that $\card X\leq\aleph_3$. As~$P$ is a finite lattice, it is a so-called \emph{almost join-semilattice} (cf. \cite[Definition~2.1.2]{GiWe2}), thus, according to \cite[Corollary 3.5.8]{GiWe2}, it is sufficient to prove that the relation $(\aleph_3,{<}\aleph_1)\leadsto P$ (cf. \cite[Definition~3.1]{GiWe1}, also \cite[Definition 3.5.1]{GiWe2}) holds. According to the definition of the Kuratowski index $\kur(P)$ given in \cite[Definition~4.1]{GiWe1}, it suffices to prove that $\kur(P)\leq 3$. As~$P$ has exactly three \jirr\ elements, this is a trivial consequence of \cite[Proposition~4.2]{GiWe1}.

The statement of CLL involves a ``$P$-scaled Boolean algebra'', denoted there by~$\xF(X)$ (where $(X,\bX)$ is the abovementioned $\aleph_1$-lifter), and the ``condensate'' $\xF(X)\otimes\vA$. A $P$-scaled Boolean algebra is a Boolean algebra endowed with a collection, indexed by~$P$, of ideals, subjected to certain constraints.
For our present purposes, neither the exact definition of a $P$-scaled Boolean algebra (cf. \cite[Definition 2.2.3]{GiWe2}), nor the exact descriptions of the constructions of~$\xF(X)$ (cf. \cite[Lemma~2.6.5]{GiWe2}) and $\xF(X)\otimes\vA$ (cf. \cite[Section 3.1]{GiWe2}), will matter.

We set $R_K:=\xF(X)\otimes\vA$. The statement that~$R_K$ is a direct limit of finite products of members of~$\cR_K$, together with the cardinality bound on~$R_K$, are immediate consequences of the following facts:
\begin{itemize}
\item $\xF(X)$ is a $P$-scaled Boolean algebra with at most~$\aleph_3$ elements.

\item Thus $\xF(X)=\varinjlim_{j\in I}\bB_j$, for a directed poset~$I$ of cardinality $\aleph_3$ and ``compact'' (i.e., here, finitely presented) $P$-scaled Boolean algebras~$\bB_j$ (cf. \cite[Proposition~2.4.6]{GiWe2}).

\item $\xF(X)\otimes\vA=
\varinjlim_{j\in I}(\bB_j\otimes\vA)$. This follows from \cite[Proposition 3.1.4]{GiWe2}.

\item Each $\bB_j\otimes\vA$ is a finite product of members of~$\vA$. This follows from \cite[Definition 3.1.1]{GiWe2}.
\end{itemize}

Suppose that there exists a surjective pre-\Vhom\ $\chi\colon\rV(B)\Rightarrow\rV(R_K)$, for a C*-algebra~$B$ with real rank~$0$. As all the assumptions underlying CLL are satisfied, there are a $P$-indexed diagram~$\vec B$ of C*-algebras of real rank~$0$ and a natural transformation $\vec{\chi}\colon\rV\vec B\Rightarrow\rV\vA$. As $\rV\vA\cong\vD$ (cf. Proposition~\ref{P:cDliftedExch}), we get a natural transformation, that we shall denote again by~$\vec{\chi}$, now $\vec{\chi}\colon\rV\vec B\Rightarrow\vD$. The double arrow notation for the natural transformation~$\vec{\chi}$ means here that each component $\chi_p\colon\rV(B_p)\to D_p$ (for $p\in P$) of~$\vec{\chi}$ is a surjective pre-\Vhom.

It follows that the rule $\mu_p(e):=\chi_p([e]_{B_p})$, for $e\in\Idp_\infty(B_p)$, defines a $D_p$-valued surjective V-measure on~$B_p$. We thus obtain a diagram~$\vec{C}$ of surjective V-premeasures on (unital) C*-algebras of real rank~$0$ such that $\forg\vec{C}\cong\vD$. But this contradicts Corollary~\ref{C:NoRR0Lift}.

This completes the proof that~$R_K$ is the desired counterexample for C*-algebras of real rank~$0$.

We also need to prove that there exists no surjective pre-\Vhom\ $\chi\colon\rV(B)\Rightarrow\rV(R_K)$, for any regular ring~$B$. The proof is similar to the one for C*-algebras of real rank~$0$. We need to use Lemmas~\ref{L:cBRightLardReg} and~\ref{L:AKRegLarder} instead of Lemmas~\ref{L:cBRightLard} and~\ref{L:AKRR0Larder}, and Corollary~\ref{C:NoVLift} instead of Corollary~\ref{C:NoRR0Lift}.
\end{proof}

\section{An $\aleph_3$-separable C*-algebra}\label{S:Al3Sep}

For an infinite cardinal~$\kappa$, we say that a C*-algebra is \emph{$\kappa$-separable} if it has a dense subset of cardinality at most~$\kappa$. In particular, $\aleph_0$-separable means separable. We claim that what we did in Section~\ref{S:Al3}, using the diagram of exchange rings constructed in Section~\ref{S:Collapse}, can be done for C*-algebras, using the diagram of C*-algebras constructed in Section~\ref{S:C*algLift}.

Consider again the C*-algebra~$\DD$ used in Example~\ref{Ex:semiprnotV} and Section~\ref{S:C*algLift}, and set
 \begin{equation}\label{Eq:DefncA}
 \cA:=\set{\CC}\cup\set{\DD}\cup\set{\Mat_2(\rC([0,1),\CC))}\,.
 \end{equation}
The relevant analogue of Theorem~\ref{T:CXAKRR0} is the following.

\begin{theorem}\label{T:C*CXAKRR0}
There exists an $\aleph_3$-separable unital C*-algebra~$E$ that satisfies the following properties.
\begin{enumerate}
\item $E$ has index of nilpotence~$2$, and real rank and stable rank both equal to~$1$.

\item $(\rV(E),[1_E])$ is the positive cone of a dimension group with order-unit of index two.

\item For every C*-algebra of real rank~$0$ \pup{resp., every regular ring}~$B$, there exists no surjective pre-\Vhom\ from~$\rV(B)$ onto~$\rV(E)$.

\end{enumerate}
In particular, there is no C*-algebra of real rank~$0$ \pup{resp., no regular ring}~$B$ such that $\rV(E)\cong\rV(B)$. Furthermore, $E$ can be constructed as the \pup{unital} C*-direct limit of a system of~$\aleph_3$ finite products of members of~$\cA$.
\end{theorem}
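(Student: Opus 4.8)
The plan is to run the proof of Theorem~\ref{T:CXAKRR0} essentially verbatim, with exchange rings replaced by C*-algebras throughout: the building blocks~$\cR_K$ are replaced by the set~$\cA$ of~\eqref{Eq:DefncA}, and the exchange-ring lifting of~$\vDu$ used there is replaced by the C*-algebraic lifting supplied by Proposition~\ref{P:cDliftedC*}. First I would introduce the C*-analogue of~$\xA_K$, namely the category~$\xA$ obtained by closing~$\cA$ under finite direct products and C*-direct limits. Each member of~$\cA$ is a unital C*-algebra of index of nilpotence at most~$2$, of stable rank~$1$, and of real rank at most~$1$ \pup{this is Proposition~\ref{P:RRSRAC} for~$\DD$ and~$\Mat_2(\AA)$, and is trivial for~$\CC$}; since all three attributes pass to finite products and, for real rank and stable rank, to C*-direct limits \pup{real rank and stable rank of a limit are bounded by the corresponding $\liminf$}, every member of~$\xA$ inherits them.

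Next I would assemble the larders. The key observation is that the proof of the left larder in Lemma~\ref{L:cAKLeftLard} never uses the exchange property: it rests only on closure under direct limits and finite products, on the preservation of direct limits by~$\rV$, and on the fact that a product projection induces a pre-\Vhom. Hence $(\xA,\CM,\CM^\Rightarrow,\Phi)$ is a left larder by the same argument, with $\Phi=\rV$. The right larders require no change whatsoever, since Lemmas~\ref{L:cBRightLard} and~\ref{L:cBRightLardReg} concern only the \emph{target} categories~$\xRR$ and~$\Reg$; I would quote them directly and combine, via~\cite[Proposition~3.8.3]{GiWe2}, into the two $\aleph_1$-larders in which~$\cA$ plays the role of~$\cR_K$. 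The only hypothesis to recheck is $\Phi(\cA)\subseteq\CM^{\les\aleph_0}$, which holds because each of the three members of~$\cA$ has countable, indeed simplicial,~$\rV$ \pup{by Lemma~\ref{L:VC[01]} and Corollary~\ref{C:V(B1[01]}}.

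With the larders in place I would apply CLL~\cite[Lemma~3.4.2]{GiWe2} exactly as in Theorem~\ref{T:CXAKRR0}, to the data $\lambda=\mu=\aleph_1$, $P$ the powerset lattice of~$\set{0,1,2}$, $\vA$ the diagram of Proposition~\ref{P:cDliftedC*} \pup{whose objects lie in~$\cA$ and which satisfies $\rVu\vA\cong\vDu$}, and the same $\aleph_1$-lifter~$(X,\bX)$ with $\card X\le\aleph_3$; the existence of this lifter again reduces to $\kur(P)\le 3$ and is unaffected by the change of building blocks. Setting $E:=\xF(X)\otimes\vA$, the condensate is a C*-direct limit of~$\aleph_3$ finite products of members of~$\cA$, just as~$R_K$ was built from~$\cR_K$. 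Item~(i) then follows: index of nilpotence at most~$2$ and stable rank~$1$ are preserved under products and C*-direct limits, while $\aleph_3$-separability holds because each block is separable and the directed system has size~$\aleph_3$. For item~(ii), each block has \emph{simplicial}~$\rV$, hence refinement; refinement survives finite products and direct limits, so $\rV(E)$ is a conical refinement monoid, and Lemma~\ref{L:BasicVcAK} upgrades it to the positive cone of a dimension group whose order-unit~$[1_E]$ has index~$2$ \pup{at most~$2$ by Lemma~\ref{L:BasicVcAK}, and the value~$2$ being attained just as in~$\vDu$}.

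Finally, item~(iii) mirrors the end of Theorem~\ref{T:CXAKRR0}. A surjective pre-\Vhom\ $\chi\colon\rV(B)\Rightarrow\rV(E)$ with~$B$ of real rank~$0$ would produce, through CLL, a $P$-indexed diagram~$\vec B$ of real-rank-$0$ C*-algebras together with a natural transformation $\vec\chi\colon\rV\vec B\Rightarrow\vD$ all of whose components are surjective pre-\Vhom s, hence a diagram of surjective V-premeasures on C*-algebras of real rank~$0$ lifting~$\vD$ with respect to~$\forg$, contradicting Corollary~\ref{C:NoRR0Lift}; the regular-ring case is identical, using Corollary~\ref{C:NoVLift}. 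Applying~(iii) to $B:=E$ forbids~$E$ from having real rank~$0$ \pup{else the identity of~$\rV(E)$ would be such a pre-\Vhom}, so, together with the bound real rank~$\le 1$ from the first paragraph, the real rank of~$E$ is exactly~$1$, completing~(i). I expect the main obstacle to be item~(ii): since a C*-algebra of real rank~$1$ is \emph{not} an exchange ring \pup{by Proposition~\ref{P:RR0Exch}}, the refinement of~$\rV(E)$ cannot be invoked from Proposition~\ref{P:V(Exch)} as it was for~$R_K$, and must instead be traced back, through the direct-limit presentation of the condensate, to the simplicial~$\rV$ of the three building blocks of~$\cA$.
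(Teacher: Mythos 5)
Your proposal is correct and follows essentially the same route as the paper's own proof: the paper likewise replaces $\cR_K$ by~$\cA$, takes the diagram~$\vA$ from Proposition~\ref{P:cDliftedC*} instead of Proposition~\ref{P:cDliftedExch}, keeps the right larders unchanged, reruns the left-larder argument of Lemma~\ref{L:cAKLeftLard} for the closure~$\xA$ of~$\cA$ under finite products and C*-direct limits, and handles real rank and stable rank via Proposition~\ref{P:RRSRAC} and preservation under limits. Even the point you flag as the ``main obstacle'' is resolved in the paper exactly as you propose, namely by noting that~$\rV$ preserves finite products and C*-direct limits, so that refinement (and the index bound on the order-unit) descends from the simplicial~$\rV$ of the three building blocks rather than from the exchange property.
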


While stating that the nonstable $\rK_0$-theory of C*-algebras is contained neither in the one of regular rings nor in the one of C*-algebras of real rank~$0$ is no big deal (for~$\rV(A)$ may not have refinement, even for a unital C*-algebra~$A$, cf. Blackadar \cite[Example~5.1.3(e)]{Black98}), Theorem~\ref{T:C*CXAKRR0} states this fact even for those C*-algebras whose nonstable $\rK_0$-theory is a refinement monoid, which is far more difficult.

The proof of Theorem~\ref{T:C*CXAKRR0} is very similar to the one of Theorem~\ref{T:CXAKRR0}, now getting the $P$-indexed diagram~$\vA$ from Proposition~\ref{P:cDliftedC*} instead of Proposition~\ref{P:cDliftedExch}. The right larder part requires no change. The left larder part needs to be modified in the obvious way: the category~$\xA_K$ needs to be replaced by the closure~$\xA$ of~$\cA$ under finite products and C*-direct limits. As the functor~$\rV$ preserves finite direct products and C*-direct limits, $(\rV(X),[1_X])$ is a refinement monoid with order-unit of index at most two, for each $X\in\xA$. As in the proof of Lemma~\ref{L:cAKLeftLard}, the
quadruple $(\xA,\CM,\CM^\Rightarrow,\Phi)$ is a left larder. As every member of~$\cA$ has real rank either~$0$ or~$1$ while it has stable rank~$1$ (cf. Proposition~\ref{P:RRSRAC}), this is also the case for every direct limit of finite products of members of~$\cA$, in particular for the condensate $E:=\xF(X)\otimes\vA$. The remaining changes that need to be applied to the proof of Theorem~\ref{T:CXAKRR0} are trivial.

\section{Open problems}\label{S:Pbs}

\begin{problem}\label{Pb:C*0vsReg}
Does any of the classes $\setm{M}{(\exists R\text{ regular ring)}(M\cong\rV(R))}$ and\newline
$\setm{M}{(\exists A\text{ C*-algebra of real rank~$0$})(M\cong\rV(A))}$ contain the other? And on positive cones of dimension groups?
\end{problem}

The ``combinatorial'' side of the second part of Problem~\ref{Pb:C*0vsReg} reads as follows.

\begin{problem}\label{Pb:C*0vsRegDiagr}
Are there finite, lattice-indexed commutative diagrams of simplicial monoids that can be lifted, with respect to the functor~$\rV$, by diagrams in one of the classes~$\Reg$ and~$\xRR$ but not the other?
\end{problem}

The results and methods of Elliott~\cite{Elli76} and Goodearl and Handelman~\cite{GoHa86} imply that the nonstable K-theories of regular rings and of C*-algebras of real rank~$0$ (and also of AF C*-algebras) agree on dimension groups of cardinality at most~$\aleph_1$. Our next question is about the remaining cardinality gap~$\aleph_2$.

\begin{problem}\label{Pb:Al2}
Can the bound $\aleph_3$ be improved to~$\aleph_2$ in Theorems~\ref{T:CXAKRR0} and~\ref{T:C*CXAKRR0}?
\end{problem}

As to the non-representability by regular rings, it looks plausible that a positive solution to Problem~\ref{Pb:Al2} would follow from the methods of Wehrung~\cite{NonMeas}. In that paper, the fact that the principal right ideals in a regular ring form a lattice is crucial. As the analogous result for C*-algebras of real rank~$0$ does not hold, it sounds unlikely that the methods of~\cite{NonMeas} are ready to help finding a solution to Problem~\ref{Pb:Al2} for those algebras. On the other hand, it is plausible that one may be able to use special functors, similar to those involved in Gillibert and Wehrung \cite[Chapter~5]{GiWe2} (which is pure lattice theory!), for which a suitable analogue of Theorem~\ref{T:NoLift} would remain valid.

Every conical \cm\ with order-unit is isomorphic to $\rVu(R)$ for some unital hereditary ring~$R$: this is proved in Theorems~6.2 and~6.4 of Bergman~\cite{Berg74} for the finitely generated case, and in Bergman and Dicks \cite[page~315]{BeDi78} for the general case. The general, non-unital case is proved in Ara and Goodearl \cite[Proposition~4.4]{ArGo11}. In light of this result, the following problem is natural.

\begin{problem}\label{Pb:RingsFunct}
Does there exist a functor~$\lift$, from the category of conical \cm s with monoid \emph{homomorphisms} to the category of rings and ring homomorphisms, such that $\rV\circ\lift\cong\id$?
\end{problem}

We do not even know the answer to the following restricted version of Problem~\ref{Pb:RingsFunct}.

\begin{problem}\label{Pb:C*AlgVLift}
Does there exist a functor~$\lift$, from the category of simplicial monoids with monoid \emph{homomorphisms} to the category of C*-algebras and \Chom s, such that $\rV\circ\lift\cong\id$?
\end{problem}

The lifting results of~$\vLu$ obtained in Theorems~\ref{T:LuLift} and~\ref{T:LuLiftC*} also suggest the following problems.

\begin{problem}\label{Pb:ExchRingsLift}
Does there exist a functor~$\lift$, from the category of simplicial monoids with normalized monoid \emph{embeddings} to the category of exchange rings and ring homomorphisms, such that $\rV\circ\lift\cong\id$?
\end{problem}

\begin{problem}\label{Pb:C*AlgSimpLift}
Does there exist a functor~$\lift$, from the category of simplicial monoids with normalized monoid \emph{embeddings} to the category of C*-algebras and \Chom s, such that $\rV\circ\lift\cong\id$?
\end{problem}

Of course, all the problems above have unital versions, which are open as well.


\begin{thebibliography}{99}

\bibitem{Ara97}
P. Ara,
\emph{Extensions of exchange rings}, J. Algebra~\textbf{197} (1997), 409--423.

\bibitem{ArGo11}
P. Ara and K.\,R. Goodearl,
\emph{Leavitt path algebras of separated graphs}, J. Reine Angew. Math., to appear. Available online at \texttt{http://arxiv.org/abs/1004.4979}\,.

\bibitem{AGOP}
P. Ara, K.\,R. Goodearl, K.\,C. O'Meara, and E. Pardo,
\emph{Separative cancellation for projective modules over
exchange rings}, Israel J. Math. \textbf{105} (1998), 105--137.

\bibitem{Berg74}
G.\,M. Bergman, \emph{Coproducts and some universal ring constructions},
Trans. Amer. Math. Soc.~\textbf{200} (1974), 33--88.

\bibitem{BeDi78}
G.\,M. Bergman and W. Dicks,
\emph{Universal derivations and universal ring constructions}, Pacific J. Math.~\textbf{79} (1978), 293--337.

\bibitem{Black98}
B. Blackadar, ``K-Theory for Operator Algebras, Second edition''.
Mathematical Sciences Research Institute Publications~\textbf{5}. Cambridge University Press, Cambridge, 1998. xx+300~p. ISBN: 0-521-63532-2

\bibitem{BrPe}
L.\,G. Brown and G.\,K. Pedersen, \emph{$C^{*}$-algebras of real
rank zero}, J. Funct. Anal. \textbf{99} (1991), 131--149.

\bibitem{Dixm}
J. Dixmier,
``Les $C^{\ast} $-Alg\`ebres et Leurs Repr\'esentations''. (French) Deuxi\`eme \'edition. Cahiers Scientifiques, Fasc. XXIX. Gauthier-Villars \'Editeur, Paris 1969. xv+390~p.

\bibitem{Dobb82}
H. Dobbertin,
\emph{On Vaught's criterion for isomorphisms of countable Boolean algebras}, Algebra Universalis~\textbf{15} (1982), 95--114.

\bibitem{EHS}
E.\,G. Effros, D.\,E. Handelman, and C-L. Shen, \emph{Dimension
groups and their affine representations}, Amer. J. Math.
\textbf{102}, no.~2 (1980), 385--407.

\bibitem{Elli76}
G.\,A. Elliott, \emph{On the classification of inductive limits
of sequences of semisimple finite-dimensional algebras}, J. Algebra
\textbf{38} (1976), 29--44.


\bibitem{GiWe1}
P. Gillibert and F. Wehrung,
\emph{An infinite combinatorial statement with a poset parameter}, Combinatorica~\textbf{31}, no.~2 (2011), 183--200.

\bibitem{GiWe2}
P. Gillibert and F. Wehrung,
\emph{{}From objects to diagrams for ranges of functors}, Lecture Notes in Mathematics, Vol.~\textbf{2029}. Springer-Verlag, Heidelberg - Dordrecht - London - New York. x+158~p. ISBN: 978-3-642-21773-9

\bibitem{Gpoag}
K.\,R. Goodearl,
``Partially Ordered Abelian Groups with Interpolation'',
Mathematical Surveys and Monographs, Vol. \textbf{20}. American
Mathematical Society, Providence,  R.I., 1986. xxii+336~p.

\bibitem{GoHa86}
K.\,R. Goodearl and D.\,E. Handelman,
\emph{Tensor products of dimension groups and $K_0$ of unit-regular rings},
Canad. J. Math.~\textbf{38}, no.~3 (1986), 633--658.

\bibitem{Good91}
K.\,R. Goodearl, ``Von Neumann Regular Rings''. Second edition. Robert E. Krieger Publishing Co., Inc., Malabar, FL, 1991. xviii+412~p. ISBN: 0-89464-632-X

\bibitem{Good94}
K.\,R. Goodearl,
\emph{Von Neumann regular rings and direct sum decomposition problems}, Abelian groups and modules (Padova, 1994), Math. Appl.~\textbf{343}, Kluwer Acad. Publ., Dordrecht 1995, 249--255.

\bibitem{Gril76}
P.\,A. Grillet,
\emph{Directed colimits of free commutative semigroups},
J. Pure Appl. Algebra \textbf{9} (1976), no.~1, 73--87.

\bibitem{Hers}
I.\,N. Herstein, ``Noncommutative Rings''. Second printing of the 1968 original. Carus Mathematical Monographs~\textbf{15}. John Wiley and Sons, 1971. xi+199~p.

\bibitem{Jaco45}
N. Jacobson,
\emph{The radical and semi-simplicity for arbitrary rings}, Amer. J. Math.~\textbf{67}, no.~2 (April~1945), 300--320.

\bibitem{Kapl}
I. Kaplansky, ``Rings of Operators'', W.\,A. Benjamin, Inc., New York - Amsterdam 1968, viii+151~p.

\bibitem{MMTa87}
R.\,N. McKenzie, G.\,F. McNulty, and W.\,F. Taylor,
``Algebras, Lattices, Varieties. Volume~I.''
The Wadsworth \& Brooks/Cole Mathematics Series. Monterey, California: Wadsworth \& Brooks/Cole Advanced Books \& Software, 1987. xii+361~p. ISBN: 0-534-07651-3

\bibitem{Murp}
G.\,J. Murphy,
``C*-Algebras and Operator Theory''. Academic Press, Inc., Boston, MA, 1990. x+286~p. ISBN: 0-12-511360-9\,.

\bibitem{Nicho77}
W.\,K. Nicholson,
\emph{Lifting idempotents and exchange rings},
Trans. Amer. Math. Soc.~\textbf{229} (1977), 269--278.

\bibitem{Enric}
E. Pardo,
``Monoides de refinament i anells d'intercanvi'', Ph. D. Thesis (in Catalan), Universitat Aut\`onoma de Barcelona, April 1995. Available online at \texttt{https://sites.google.com/a/gm.uca.es/enrique-pardo-s-home-page/phd-thesis}\,.

\bibitem{Rief83}
M.\,A. Rieffel,
\emph{Dimension and stable rank in the $K$-theory of $C^{\ast}$-algebras}, 
Proc. London Math. Soc. (3)~\textbf{46}, no.~2 (1983), 301--333.

\bibitem{RLLa}
M. R\o rdam, F. Larsen, and N. Laustsen,
``An Introduction to $K$-Theory for $C^*$-Algebras''.
London Mathematical Society Student Texts~\textbf{49}. Cambridge University Press, Cambridge, 2000. xii+242~p. ISBN: 0-521-78334-8; 0-521-78944-3

\bibitem{Su95}
H. Su,
\emph{On the classification of C*-algebras of real rank zero: inductive limits of matrix algebras over non-Hausdorff graphs},
Mem. Amer. Math. Soc.~\textbf{114} (1995), no.~547, viii+83~p.

\bibitem{TuWe01}
J. T\r{u}ma and F. Wehrung,
\emph{Simultaneous representations of semilattices by lattices with permutable congruences}, Internat. J. Algebra Comput.~\textbf{11}, no.~2 (2001), 217--246.

\bibitem{Warf72}
R.\,B. Warfield, Jr.,
\emph{Exchange rings and decompositions of modules},
Math. Ann.~\textbf{199} (1972), 31--36.

\bibitem{NonMeas}
F. Wehrung,
\emph{Non-measurability properties of interpolation vector spaces}, Israel J. Math.~\textbf{103} (1998), 177--206.

\bibitem{WDim}
F. Wehrung,
\emph{The dimension monoid of a lattice}, Algebra Universalis~\textbf{40}, no.~3 (1998), 247--411.

\bibitem{Yu95}
H.-P. Yu,
\emph{Stable range one for exchange rings},
J. Pure Appl. Algebra~\textbf{98} (1995), 105--109.

\end{thebibliography}
\end{document}